\def\RSthmtxt{theorem~}\newref{thm}{name = \RSthmtxt}}
\def\RSlemtxt{lemma~}\newref{lem}{name = \RSlemtxt}}
\theoremstyle{plain}
\newtheorem{thm}{\protect\theoremname}[section]
  \theoremstyle{plain}
  \newtheorem{lem}[thm]{\protect\lemmaname}
  \theoremstyle{plain}
  \newtheorem{prop}[thm]{\protect\propositionname}
  \theoremstyle{plain}
  \newtheorem{cor}[thm]{\protect\corollaryname}
  \theoremstyle{definition}
  \newtheorem{defn}[thm]{\protect\definitionname}
  \theoremstyle{remark}
  \newtheorem{rem}[thm]{\protect\remarkname}
  \theoremstyle{remark}
  \newtheorem*{rem*}{\protect\remarkname}
  \theoremstyle{definition}
  \newtheorem{example}[thm]{\protect\examplename}
 \newlist{casenv}{enumerate}{4}
 \setlist[casenv]{leftmargin=*,align=left,widest={iiii}}
 \setlist[casenv,1]{label={{\itshape\ \casename} \arabic*.},ref=\arabic*}
 \setlist[casenv,2]{label={{\itshape\ \casename} \roman*.},ref=\roman*}
 \setlist[casenv,3]{label={{\itshape\ \casename\ \alph*.}},ref=\alph*}
 \setlist[casenv,4]{label={{\itshape\ \casename} \arabic*.},ref=\arabic*}
\DeclareMathOperator{\Rad}{Rad}
\DeclareMathOperator{\PT}{\mathcal{PT}}
\DeclareMathOperator{\E}{\mathcal{E}}
\DeclareMathOperator{\PO}{\mathcal{PO}}
\DeclareMathOperator{\PF}{\mathcal{PF}}
\DeclareMathOperator{\PC}{\mathcal{PC}}
\DeclareMathOperator{\EO}{\mathcal{EO}}
\DeclareMathOperator{\EC}{\mathcal{EC}}
\DeclareMathOperator{\IO}{\mathcal{IO}}
\DeclareMathOperator{\IF}{\mathcal{IF}}
\DeclareMathOperator{\IC}{\mathcal{IC}}
\DeclareMathOperator{\C}{\mathcal{C}}
\DeclareMathOperator{\EF}{\mathcal{EF}}
\DeclareMathOperator{\SEO}{\mathcal{SEO}}
\DeclareMathOperator{\IS}{\mathcal{IS}}
\DeclareMathOperator{\Lc}{\mathcal{L}}
\DeclareMathOperator{\Hc}{\mathcal{H}}
\DeclareMathOperator{\dom}{\mathsf{dom}}
\DeclareMathOperator{\op}{op}
\DeclareMathOperator{\db}{\mathbf{d}}
\DeclareMathOperator{\rb}{\mathbf{r}}
\DeclareMathOperator{\SP}{span}
\DeclareMathOperator{\VS}{\mathbf{VS}}
\DeclareMathOperator{\df}{defect}
\def\RSlemtxt{Lemma~}
\def\RSthmtxt{Theorem~}
\providecommand{\corollaryname}{Corollary}
  \providecommand{\definitionname}{Definition}
  \providecommand{\examplename}{Example}
  \providecommand{\lemmaname}{Lemma}
  \providecommand{\propositionname}{Proposition}
  \providecommand{\remarkname}{Remark}
 \providecommand{\casename}{Case}
\providecommand{\theoremname}{Theorem}
  \providecommand{\corollaryname}{Corollary}
  \providecommand{\definitionname}{Definition}
  \providecommand{\examplename}{Example}
  \providecommand{\lemmaname}{Lemma}
  \providecommand{\propositionname}{Proposition}
  \providecommand{\remarkname}{Remark}
 \providecommand{\casename}{Case}
\providecommand{\theoremname}{Theorem}
\begin{document}

\title{Representation theory of order-related monoids of partial functions
as locally trivial category algebras}

\author{Itamar Stein\\
 Mathematics Unit\\
Shamoon College of Engineering\\
Israel\\
Steinita@gmail.com}
\maketitle
\begin{abstract}
In this paper we study the representation theory of three monoids
of partial functions on an $n$-set. The monoid of all order-preserving
functions (i.e., functions satisfying $f(x)\leq f(y)$ if $x\leq y$)
the monoid of all order-decreasing functions (i.e. functions satisfying
$f(x)\leq x$) and their intersection (also known as the partial Catalan
monoid). We use an isomorphism between the algebras of these monoids
and the algebras of some corresponding locally trivial categories.
We obtain an explicit description of a quiver presentation for each
algebra. Moreover, we describe other invariants such as the Cartan
matrix and the Loewy length.
\end{abstract}
\textbf{Mathematics Subject Classification. }20M30, 16G10.

\section{Introduction}

Given a finite monoid $M$, it is of interest to study its algebra
$\Bbbk M$ over some field $\Bbbk$. Monoids with natural combinatorial
structure are clearly of major interest. Denote by $\PT_{n}$ the
monoid of all partial functions on an $n$-element set $\{1,\ldots,n\}$.
A partial function $f$ is called \emph{order-preserving} if $x\leq y$
implies that $f(x)\leq f(y)$ for all $x,y$ in the domain of $f$.
$f$ is called \emph{order-decreasing} if $f(x)\leq x$ for every
$x$ in the domain of $f$. We denote by $\PO_{n}$ the submonoid
of $\PT_{n}$ consisting of all order-preserving partial functions
and by $\PF_{n}$ the submonoid of all order-decreasing partial functions.
The intersection $\PC_{n}=\PO_{n}\cap\PF_{n}$ is called the partial
Catalan monoid. These monoids are well-studied. For instance, see
\cite[Chapter 14]{Ganyushkin2009b} and references therein. Denote
by $\E_{n}$ the finite category whose objects are all subsets of
$\{1,\ldots,n\}$ and given two subsets $A,B\subseteq\{1,\ldots,n\}$
the hom-set $\E_{n}(A,B)$ consists of all onto functions with domain
$A$ and range $B$. In \cite[Section 5]{Stein2016} the author proved
that each one of the monoid algebras $\Bbbk\PO_{n}$, $\Bbbk\PF_{n}$
and $\Bbbk\PC_{n}$ is isomorphic to a category algebra of some corresponding
subcategory of $\E_{n}$. These are actually examples of an isomorphism
that holds for a larger class of semigroups (for details, see \cite{Stein2017erratum,Stein2017,Wang2017}).
In \cite[Section 5]{Stein2016} this isomorphism was used in order
to describe the ordinary quiver of these algebras. In this paper we
continue this study and obtain a description of other invariants of
these algebras. Our main result is a description of a quiver presentation
for these algebras. We remark that currently there are relatively
few (non-trivial) cases of monoid algebras for which a description
of a quiver presentation is known. Some of these known cases can be
found in \cite{Margolis2015,Ringel2000,Saliola2009}. A \emph{quiver
presentation} of an algebra $A$ is a standard way to ``present''
an algebra in the theory of associative algebras. It consists of a
(unique) ``generating'' graph $Q$ called the \emph{(ordinary)}
\emph{quiver} of $A$ and a (non-unique) set $R$ of ``relations''
between paths in $Q$. A tuple $(Q,R)$ presents an algebra $B$ which
does not need to be isomorphic to $A$ but it is Morita equivalent
to $A$ (i.e., the module categories of $A$ and $B$ are equivalent).
Therefore $A$ and $B$ share most of the important invariants of
representation theory. Another important invariant that we will study
is the Cartan matrix of an algebra, which gives the Jordan-Hölder
decomposition of every (indecomposable) projective module into simple
factors. We will also find the Loewy length of these algebras and
their decomposition into a direct product of connected algebras.

The key fact in studying the algebras $\Bbbk\PO_{n}$, $\Bbbk\PF_{n}$
and $\Bbbk\PC_{n}$ is that each of their corresponding categories
is \emph{locally trivial, }i.e., the only endomorphisms are the identity
maps of the category. We remark that locally trivial categories were
used in monoid theory by Tilson in \cite{Tilson1987}. In \secref{AlgebrasOfLocallyTrivialCat}
we study algebras of locally trivial categories in general. The main
observation is that the description of many invariants of the category
algebra can be reduced to some question about the category itself.
Finding a quiver presentation for the algebra, can be reduced to finding
a presentation for the category itself. Therefore we can reduce a
representation theoretic problem into a combinatorial problem. Likewise
the Cartan matrix has an immediate interpretation via hom-sets of
the category. The connectedness of the algebra is reduced to connectedness
of the category as a graph. In \secref{RepresentationTheoryOfMonoidsOfPartialFunctions}
we use these observation to study $\Bbbk\PO_{n}$, $\Bbbk\PF_{n}$
and $\Bbbk\PC_{n}$ and obtain an explicit descriptions of their invariants
or at least an explicit way to compute them.

\section{Preliminaries}

\subsection{Categories and $\mathbb{\Bbbk}$-linear categories}

\paragraph{Graphs}

Let $Q$ be a (finite, directed) graph. We denote by $Q^{0}$ its
set of vertices (or \emph{objects}) and by $Q^{1}$ its set of edges
(or \emph{morphisms}). We allow $Q$ to have more than one morphisms
between two objects. We denote by $\db$ and $\rb$ the domain and
range functions
\[
\db,\rb:Q^{1}\to Q^{0}
\]
which associate to every edge $m\in Q^{1}$ its \emph{domain $\db(m)$
}and \emph{range $\rb(m)$ }respectively\emph{.} The set of edges
with domain $a$ and range $b$ (also called an \emph{hom-set}) is
denoted $Q(a,b)$.

\paragraph{Categories and their presentations}

Let $E$ be a finite category. Since any category has an underlying
graph, the above notations hold also for $E$.\emph{ }A \emph{relation}
$R$ on a category $E$ is a relation on the set of morphisms $E^{1}$
such that $mRm^{\prime}$ implies that $\db(m)=\db(m^{\prime})$ and
$\rb(m)=\rb(m^{\prime})$. A relation $\theta$ on a category $E$
is called a (category) congruence if $\theta$ is an equivalence relation
with the property that $m_{1}\theta m_{2}$ and $m_{1}^{\prime}\theta m_{2}^{\prime}$
implies $m_{1}^{\prime}m_{1}\theta m_{2}^{\prime}m_{2}$ whenever
$\rb(m_{1})=\db(m_{1}^{\prime})$ and $\rb(m_{2})=\db(m_{2}^{\prime})$.
The quotient category $E/\theta$ is then defined in a natural way.
The objects of $E$ and $E/\theta$ are identical and the morphisms
of $E/\theta$ are the equivalence classes of $\theta$. The fact
that intersection of congruences is a congruence implies that any
relation $R$ on $E$ generates some congruence $\theta$, that is,
there exists some congruence $\theta$ which is the minimal congruence
on $E$ containing $R$. We will denote this congruence by $\theta_{R}$.
For every graph $Q$, denote by $Q^{\ast}$ the free category generated
by the graph $Q$. The object set of $Q^{\ast}$ and $Q$ are identical
but the morphisms of $Q^{\ast}$ are the paths in $Q$ (including
one empty path for every object). The composition in $Q^{\ast}$ is
concatenation (from right to left) of paths and the empty paths are
the identity elements. We say that a subgraph $Q$ of $E$ \emph{generates}
$E$ if $Q^{0}=E^{0}$ and every non-identity morphism in $E$ can
be written as a composition of morphisms from $Q$. In this case,
one can define a congruence $\theta$ on $Q^{\ast}$ relating paths
that represent the same morphism in $E$ and obviously $Q^{\ast}/\theta\simeq E$.
Equivalently one can define a projection functor $\pi:Q^{\ast}\to E$
which is identity on objects and sends every path to the morphism
in $E^{1}$ it represent. Then $\pi(p_{1})=\pi(p_{2})$ for two paths
$p_{1},p_{2}\in(Q^{\ast})^{1}$ if and only if $p_{1}\theta p_{2}$.
If $R$ is a relation on $Q^{\ast}$ that generates $\theta$ (i.e.,
$\theta=\theta_{R}$) the convention is to call the morphisms of $Q$
\emph{generators} and the elements of $R$ \emph{relations}. We also
say that $E$ is \emph{presented} by the generators $Q$ and relations
$R$ and that $(Q,R)$ is a \emph{presentation} of $E$. Note that
the set of relations defining a congruence is not unique even if $Q$
is fixed. Note also that if $E$ is a monoid (viewed as a category
with one object) then this definition reduces to the usual definition
of a presentation of a monoid by generators and relations. It will
be useful to view a category presentation also as a coequalizer. Consider
a graph $Q$ and a relation $R=\{(m_{i},m_{i}^{\prime})\mid i\in I\}$
on $Q^{\ast}$. Denote by ${\bf 1}$ the graph with two objects and
one morphism connecting them, i.e, a graph that looks like $\ast\to\ast$.
It is clear that any functor $F:{\bf 1}^{\ast}\to Q^{\ast}$ corresponds
to choosing a morphism of $Q^{\ast}$. Denote by ${\bf I}$ a disjoint
union of $|I|$ copies of ${\bf 1}$. A functor $F:{\bf I^{\ast}\to Q^{\ast}}$
can be defined by associating each index $i\in I$ with a morphism
$F(i)$ of $Q^{\ast}$. Now define two functors $M,M^{\prime}:{\bf I}^{\ast}\to Q^{\ast}$
by $M(i)=m_{i}$ and $M^{\prime}(i)=m_{i}^{\prime}$. The category
$E$ which $(Q,R)$ presents is easily seen to be the coequalizer
of the diagram 
\[
{\bf I}^{\ast}\begin{array}{c}
\overset{M}{\to}\\
\underset{M^{\prime}}{\to}
\end{array}Q^{\ast}
\]
in the category of (small) categories.

\paragraph{$\Bbbk$-linear categories and their presentations}

A $\mathbb{\Bbbk}$-linear category, is a category $L$ enriched over
the category of $\mathbb{\Bbbk}$-vector spaces $\VS_{\mathbb{\Bbbk}}$.
This means that every hom-set of $L$ is a $\mathbb{\Bbbk}$-vector
space and the composition of morphisms is a bilinear map with respect
to the vector space operations. A \emph{functor} of $\mathbb{\Bbbk}$-linear
categories is a category functor which is also a linear transformation
when restricted to any hom-set. For any category $E$, we associate
a $\mathbb{\Bbbk}$-linear category $L_{\Bbbk}[E]$, called the \emph{linearization}
of $E$, defined in the following way. The objects of $L_{\Bbbk}[E]$
and $E$ are identical, and every hom-set $L_{\Bbbk}[E](a,b)$ is
the $\Bbbk$-vector space with basis $E(a,b)$. The composition of
morphisms in $L_{\Bbbk}[E]$ is defined naturally in the only way
that extends the composition of $E$ and forms a bilinear map. It
is easy to see that $L_{\Bbbk}$ is actually a functor from the category
of (small) categories to the category of (small) $\Bbbk$-linear categories.
It is not difficult to check that it is the left adjoint of the natural
forgetful functor from $\mathbb{\Bbbk}$-linear categories to categories.
Let $L$ be a $\mathbb{\Bbbk}$-linear category. A relation $\rho$
on $L^{1}$ is called a ($\mathbb{\Bbbk}$-linear category) congruence
if $\rho$ is a category congruence and also a vector space congruence
on every hom-set $L(a,b)$. By a vector space congruence on a vector
space $V$ we mean an equivalence relation $\rho$ on $V$ such that
$u_{1}\rho v_{1}$ and $u_{2}\rho v_{2}$ implies $(u_{1}+u_{2})\rho(v_{1}+v_{2})$
and $u\rho v$ implies $(cu)\rho(cv)$ for every $c\in\Bbbk$. The
quotient $\Bbbk$-linear category $L/\rho$ is then defined in the
natural way. For every relation $R$ on $L$ there exists a unique
minimal congruence on $L$ that contains $R$. We will denote this
congruence by $\rho_{R}$. Now we can define a presentation of $\mathbb{\Bbbk}$-linear
categories. Let $Q$ be a subgraph of $L$ such that $Q^{0}=L^{0}$.
The free $\mathbb{\Bbbk}$-linear category generated by $Q$ is the
category $L_{\Bbbk}[Q^{\ast}]$ of all linear combinations of paths.
If $R$ is a relation on $L_{\Bbbk}[Q^{\ast}]$ such that $L_{\Bbbk}[Q^{\ast}]/\rho_{R}\simeq L$,
then we say that $(Q,R)$ is a \emph{presentation} of $L$. Again,
$L$ can be viewed as a coequalizer. Assume $R=\{(m_{i},m_{i}^{\prime})\mid i\in I\}$
and define ${\bf 1}$ and ${\bf I}$ as before. It is clear that a
functor $F:L_{\Bbbk}[{\bf I^{\ast}]\to}L_{\Bbbk}[Q^{\ast}]$ can be
defined by associating each index $i\in I$ with a morphism $F(i)$
of $L_{\Bbbk}[Q^{\ast}]$. Define two functors $M,M^{\prime}:L_{\Bbbk}[{\bf I}^{\ast}]\to L_{\Bbbk}[Q^{\ast}]$
by $M(i)=m_{i}$ and $M^{\prime}(i)=m_{i}^{\prime}$. Then the category
$L$ is the coequalizer of the following diagram.
\[
L_{\Bbbk}[{\bf I}^{\ast}]\begin{array}{c}
\overset{M}{\to}\\
\underset{M^{\prime}}{\to}
\end{array}L_{\Bbbk}[Q^{\ast}]
\]

Note that any category $E$ can be naturally regarded as a subcategory
of $L_{\Bbbk}[E]$. Therefore, if $Q$ is a subgraph of $E$ then
it is clearly a subgraph of $L_{\Bbbk}[E]$ as well. If $R$ is a
relation on $Q^{\ast}$ then it can also be regarded as a relation
on $L_{\Bbbk}[Q^{\ast}]$. This allows us to state the following simple
observation that will be useful in the sequel.
\begin{lem}
\label{lem:PresentationCatToLinearCat}Let $Q$ be a subgraph of $E$
and let $R$ be a relation on $Q^{\ast}$ such that $(Q,R)$ is a
category presentation for $E$. Then $(Q,R)$ is also a $\mathbb{\Bbbk}$-linear
category presentation for $L_{\Bbbk}[E]$.
\end{lem}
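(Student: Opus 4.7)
The plan is to exploit the two coequalizer characterizations of presentations that were just set up in the preliminaries, together with the fact -- also recorded in the preliminaries -- that $L_{\Bbbk}$ is left adjoint to the forgetful functor from $\Bbbk$-linear categories to categories.

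First, I would note that by hypothesis $E$ is the coequalizer in the category of small categories of the diagram $M,M':\mathbf{I}^{\ast}\rightrightarrows Q^{\ast}$, where $M(i)=m_{i}$ and $M'(i)=m_{i}'$ for the pairs $(m_{i},m_{i}')\in R$. Next, since $L_{\Bbbk}$ is a left adjoint it preserves all colimits, and in particular coequalizers. Applying $L_{\Bbbk}$ to the above diagram therefore yields a coequalizer diagram $L_{\Bbbk}[\mathbf{I}^{\ast}]\rightrightarrows L_{\Bbbk}[Q^{\ast}]$ in the category of small $\Bbbk$-linear categories whose coequalizer is $L_{\Bbbk}[E]$. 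Because $L_{\Bbbk}$ is the identity on objects and sends each basis morphism of $Q^{\ast}$ to itself, the two induced $\Bbbk$-linear functors are literally the pair $M,M'$ used in the preliminaries to describe the $\Bbbk$-linear category presented by $(Q,R)$. By that second coequalizer description, the coequalizer in question is $L_{\Bbbk}[Q^{\ast}]/\rho_{R}$, so $L_{\Bbbk}[Q^{\ast}]/\rho_{R}\simeq L_{\Bbbk}[E]$, which is exactly the statement that $(Q,R)$ presents $L_{\Bbbk}[E]$.

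I expect the only real obstacle to be a bookkeeping one, namely verifying that the adjunction $L_{\Bbbk}\dashv U$ (which the paper only asserts in passing) genuinely produces preservation of coequalizers in the required form, and that the resulting coequalizer diagrams match on the nose rather than merely up to reindexing of the set $I$. If I wanted to sidestep the adjoint-functor machinery entirely, an equivalent and completely explicit alternative would be the following: define a $\Bbbk$-linear functor $\Phi:L_{\Bbbk}[Q^{\ast}]\to L_{\Bbbk}[E]$ by linearly extending the inclusion $Q\hookrightarrow E$ composed with the projection $Q^{\ast}\to E$; observe that the relations of $R$ already hold inside $E$, hence inside $L_{\Bbbk}[E]$, so $\Phi$ factors through $L_{\Bbbk}[Q^{\ast}]/\rho_{R}$; and finally construct an inverse by first using the hypothesis $Q^{\ast}/\theta_{R}\simeq E$ to land in $L_{\Bbbk}[Q^{\ast}]/\rho_{R}$ and then extending linearly.

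Either route is short and essentially formal: no combinatorial computation is needed, because the hypothesis already supplies the congruence at the level of bases, and linearisation is a purely functorial operation. The lemma will then be available as the technical bridge needed in the following section, where presentations of the relevant locally trivial categories are translated directly into presentations of their $\Bbbk$-linearisations, and thence into quiver presentations of $\Bbbk\PO_{n}$, $\Bbbk\PF_{n}$ and $\Bbbk\PC_{n}$.
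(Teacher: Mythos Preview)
Your proposal is correct and follows essentially the same argument as the paper: both describe $E$ as the coequalizer of $M,M':\mathbf{I}^{\ast}\rightrightarrows Q^{\ast}$, invoke that $L_{\Bbbk}$ is a left adjoint and hence preserves coequalizers, and identify the resulting diagram with the one defining the $\Bbbk$-linear presentation. The paper's proof is even more terse than yours and does not spell out the identification step or the alternative explicit route, but the core idea is identical.
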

\begin{proof}
As described above, $E$ is the coequalizer of the following diagram
\[
{\bf I}^{\ast}\begin{array}{c}
\overset{M}{\to}\\
\underset{M^{\prime}}{\to}
\end{array}Q^{\ast}
\]
where ${\bf I}$, $M$ and $M^{\prime}$ are as defined above. Applying
the functor $L_{\Bbbk}$, we obtain the diagram 
\[
L_{\Bbbk}[{\bf I}^{\ast}]\begin{array}{c}
\overset{M}{\to}\\
\underset{M^{\prime}}{\to}
\end{array}L_{\Bbbk}[Q^{\ast}].
\]
Since $L_{\Bbbk}$ is a left adjoint it preserves coequalizers (\cite[Chapter V, Section 5]{MacLane1998}).
Hence $L_{\Bbbk}[E]$ is the coequalizer of the second diagram which
is precisely what we want to prove.
\end{proof}
More on categories and linear categories can be found in \cite{MacLane1998}.

\subsection{Algebras and representations}

A \emph{representation} of a $\Bbbk$-linear category $L$ is a functor
of $\Bbbk$-linear categories from $L$ to the category of all $\Bbbk$-vector
spaces $\VS_{\Bbbk}$. Recall that a $\mathbb{\Bbbk}$-algebra is
a $\mathbb{\Bbbk}$-linear category with one object. We will mainly
be interested in category algebras. For some (finite) category $E$,
the \emph{category algebra} $\mathbb{\Bbbk}E$ is defined in the following
way. It is a vector space over $\mathbb{\Bbbk}$ with basis the morphisms
of $E$, that is, it consists of all formal linear combinations
\[
\{k_{1}m_{1}+\ldots+k_{n}m_{n}\mid k_{i}\in\mathbb{\Bbbk},\,m_{i}\in E^{1}\}.
\]
The multiplication in $\mathbb{\Bbbk}E$ is the linear extension of
the following:
\[
m^{\prime}\cdot m=\begin{cases}
m^{\prime}m & \db(m^{\prime})=\rb(m)\\
0 & \text{otherwise.}
\end{cases}
\]
Since a monoid $M$ is a category with one object, this definition
also gives a definition for monoid algebras. In this case the monoid
algebra contains linear combinations of elements of the monoid with
the obvious multiplication. If $M$ has a zero element $0\in M$ then
$\Bbbk\{0\}$ is an ideal of $\Bbbk M$. In this special case we also
define $\mathbb{\Bbbk}_{0}M=\mathbb{\Bbbk}M/\mathbb{\Bbbk}\{0\}$. 

Let $A$ be some $\mathbb{\Bbbk}$-algebra. Unless stated otherwise,
we assume that algebras are finite dimensional. \textcolor{black}{Recall
that two idempotents }$e,f\in A$\textcolor{black}{{} are called orthogonal
if }$ef=fe=0$.\textcolor{black}{{} A non-zero idempotent} $e\in A$
is called \emph{primitive} if it is not a sum of two non-zero orthogonal
idempotents. This is equivalent to $eAe$ being a local algebra (i.e.,
an algebra with no non-trivial idempotents). A \emph{complete set
of primitive orthogonal idempotents }is a set of primitive, mutually
orthogonal idempotents $\{e_{1},\ldots,e_{r}\}$ whose sum is $1$.
Recall that the radical of $A$ is the minimal ideal such that $A/\Rad A$
is a semisimple algebra. $A$ is called \emph{split basic} if $A/\Rad A\simeq\Bbbk^{n}$,
i.e, the maximal semisimple quotient of $A$ is a direct product of
the base field. We can associate to every algebra $A$ a linear category,
denoted $\mathscr{L}(A)$, in the following way. The objects are in
one-to-one correspondence with a complete set of primitive idempotents.
The hom-set $\mathscr{L}(A)(e_{i},e_{j})$ is the set $e_{j}Ae_{i}$,
i.e., all the elements $a\in A$ such that $e_{j}ae_{i}=a$. Composition
of two morphisms is naturally defined as their product in the algebra
$A$. The category of all $A$-representations is equivalent to the
category of all $\mathscr{L}(A)$-representations (no matter which
complete set of primitive idempotents is chosen). There are some important
invariants of the algebra $A$ that can be described by the category
$\mathscr{L}(A)$ as we will see immediately. Let $A$ be a split
basic algebra.\textcolor{red}{{} }The \emph{(ordinary) quiver} of $A$
is a directed graph $Q$ defined in the following way: The set of
vertices of $Q$ is in one-to-one correspondence with $\{e_{1},\ldots,e_{n}\}$
and the edges (more often called \emph{arrows}) from $e_{k}$ to $e_{r}$
are in one to one correspondence with some basis of the vector space
$(e_{r}+\Rad^{2}A)(\Rad A/\Rad^{2}A)(e_{k}+\Rad^{2}A)$. It is well
known that this definition does not depend on the exact choice of
the primitive orthogonal idempotents. The quiver $Q$ of $A$ can
be identified with a certain subgraph of $\mathscr{L}(A)$. Actually,
there exists a $\Bbbk$-linear relation $R$ on $L_{\Bbbk}[Q^{\ast}]$
such that $(Q,R)$ is a presentation for $\mathscr{L}(A)$ (\cite[Theorem 1.9 on page 65]{Auslander1997}).
Such a pair $(Q,R)$ is called a \emph{quiver presentation }for the
algebra $A$. We briefly recall some other definitions related to
an algebra. An algebra $A$ is connected if $0,1$ are its only central
idempotents. If $A$ is not connected then it is a direct product
of connected algebras called the \emph{blocks} of $A$. It is well
known that the number of blocks of $A$ is the number of the connected
components of its quiver $Q$ (as a graph). \textcolor{black}{The
}\textcolor{black}{\emph{Cartan matrix }}\textcolor{black}{of }$A$\textcolor{black}{{}
is an }$n\times n$ matrix whose $(i,j)$ entry is $\dim_{\Bbbk}e_{i}Ae_{j}$.
The \emph{descending Loewy series} of an algebra $A$ is the decreasing
sequence of ideals 
\[
0\subsetneq\ldots\subsetneq\Rad^{2}A\subsetneq\Rad A\subsetneq A
\]
and the \emph{Loewy length} of $A$ is the minimal integer $n$ such
that $\Rad^{n}A=0$. More facts on representations of algebras and
proofs can be found in \cite{Assem2006,Auslander1997}.

\section{Algebras of locally trivial categories\label{sec:AlgebrasOfLocallyTrivialCat}}

Recall that an \emph{endomorphism }in a category\emph{ }is a morphism
whose domain and range are equal. A category $E$ is called \emph{locally
trivial }if the only endomorphisms of $E$ are the identity morphisms.
In this section we will be interested in invariants related to the
structure of the category algebra $\Bbbk E$ such as the quiver presentation
and the Cartan matrix. We show that the description of these invariants
can be reduced quite easily to some questions about the structure
of the category $E$ itself. All facts in this section appear in the
literature or folklore. However the proofs are usually very simple
and we will give some of them for the sake of completeness. Note that
a partial order (considered as a category) is a special case of a
locally trivial category. Therefore, the results in this section generalize
some well known-facts about incidence algebras (i.e. algebras of partial
orders). Moreover, some of the results in this section were proved
for the more general case of an EI-category, i.e., a category where
every endomorphism is an isomorphism.

We start with describing the Jacobson radical of a locally trivial
category algebra. The following proposition is a special case of \cite[Proposition 4.6]{Li2011}.
\begin{prop}
\label{prop:RadOfEICat}Let $E$ be a finite locally trivial category.
Then the Jacobson radical $\Rad\mathbb{\mathbb{\Bbbk}}E$ is spanned
by all the non-isomorphisms of $E$.
\end{prop}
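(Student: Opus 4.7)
The plan is to set $N$ to be the linear span of all non-isomorphisms of $E$ inside $\Bbbk E$ and to identify $N$ with $\Rad\Bbbk E$ by verifying three standard criteria: $N$ is a two-sided ideal, $N$ is nilpotent, and $\Bbbk E/N$ is semisimple.

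For the ideal property I would first record the useful principle that in a locally trivial category every one-sided inverse is automatically two-sided: if $gf = 1_a$ for morphisms $f\colon a\to b$ and $g\colon b\to a$, then $fg$ is an endomorphism of $b$ and hence equals $1_b$, making $f$ an isomorphism. Using this, suppose $f$ is a non-isomorphism and $h$ is a morphism with $hf$ defined; if $hf$ were an isomorphism with inverse $k$, then $(kh)f = 1$ would produce a left inverse of $f$, which the principle above promotes to a two-sided inverse, contradicting that $f$ is not an isomorphism. A symmetric argument handles $fh$. Extending bilinearly then shows that any product in $\Bbbk E$ of an element with a non-isomorphism is again a linear combination of non-isomorphisms (or zero), so $N$ is a two-sided ideal.

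Nilpotency follows from a pigeonhole argument on the chain of objects traversed by a composition. Any nonzero product $f_1 f_2 \cdots f_k$ of non-isomorphisms in $\Bbbk E$ arises from a composable sequence passing through $k+1$ objects of $E$; once $k\geq |E^0|$, two of these objects must coincide, and the intervening sub-composition is an endomorphism, hence equal to an identity by local triviality. A slight refinement of the one-sided-inverse argument—applied by splitting this identity around any chosen factor $f_l$ and observing that both resulting segments give one-sided, hence two-sided, inverses of $f_l$—forces $f_l$ to be an isomorphism, contradicting the hypothesis on the $f_i$'s. Therefore $N^{|E^0|} = 0$.

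Finally, I would identify $\Bbbk E/N$ as the groupoid algebra of the wide subgroupoid $G \subseteq E$ consisting of all isomorphisms. By local triviality, every automorphism group of $G$ is trivial, and between any two objects in the same connected component of $G$ there is at most one isomorphism (because two such isomorphisms would compose to a non-identity endomorphism). Consequently each connected component of $G$ on $r$ objects contributes a matrix algebra $M_r(\Bbbk)$ to $\Bbbk G$, so $\Bbbk E/N$ is a finite product of matrix algebras over $\Bbbk$ and in particular semisimple. Combining the three properties yields $N = \Rad\Bbbk E$. The main technical hurdle is the nilpotency step, where one must carefully use the identity sub-composition inside a long product of non-isomorphisms to manufacture a genuine two-sided inverse of an individual factor; once the one-sided-is-two-sided principle is available, everything else is routine bookkeeping.
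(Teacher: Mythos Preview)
Your argument is correct. The three ingredients---that $N$ is a two-sided ideal (via the observation that one-sided inverses in a locally trivial category are automatically two-sided), that $N$ is nilpotent (via pigeonhole on the objects visited by a long composable chain of non-isomorphisms), and that $\Bbbk E/N$ is a product of matrix algebras over $\Bbbk$---combine exactly as you describe to give $N=\Rad\Bbbk E$. Your nilpotency step is in fact slightly stronger than needed: once a sub-composition $f_{p+1}\cdots f_q$ equals an identity, already the outermost factor $f_{p+1}$ acquires a right inverse and is therefore an isomorphism, so there is no need to split around an interior $f_l$ (though that works too, by first showing the two flanking segments are isomorphisms).

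As for comparison with the paper: the paper does not actually prove this proposition. It simply records the statement as a special case of \cite[Proposition~4.6]{Li2011}, which treats the more general setting of EI-categories (categories in which every endomorphism is an isomorphism). Your proof is a clean, self-contained argument tailored to the locally trivial case; its advantage is that it avoids any appeal to the representation theory of the endomorphism groups, which is where the extra work in the EI-category version lies. The cited result, in turn, buys generality you do not need here.
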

For simplicity of notation, we will write $\mathcal{R}(E)=\Rad\mathbb{\Bbbk}E$.
\begin{cor}
\label{cor:RadToThek}Let $E$ be a finite and locally trivial category.
$\mathcal{R}^{k}(E)$ is spanned by all the morphisms that can be
written as a composition of $k$ non-isomorphisms.
\end{cor}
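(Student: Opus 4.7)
The plan is a direct unpacking of the definition of $\mathcal{R}^{k}(E)$ as the $k$-fold product $\mathcal{R}(E)\cdot\mathcal{R}(E)\cdots\mathcal{R}(E)$, combined with \propref{RadOfEICat}. Concretely, $\mathcal{R}^{k}(E)$ is the $\Bbbk$-span of all products $r_{1}r_{2}\cdots r_{k}$ with $r_{i}\in\mathcal{R}(E)$. By \propref{RadOfEICat}, each such $r_{i}$ is a linear combination of non-isomorphisms of $E$, so after expanding by multilinearity, $\mathcal{R}^{k}(E)$ is spanned by products of the form $f_{1}f_{2}\cdots f_{k}$ where every $f_{i}\in E^{1}$ is a non-isomorphism.

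The next step uses the specific shape of multiplication in a category algebra: a product of basis morphisms in $\Bbbk E$ is either $0$ (when the domains and ranges fail to match at some junction) or equals the composite $f_{1}f_{2}\cdots f_{k}$ taken inside $E$, which is then by definition a morphism expressible as a composition of $k$ non-isomorphisms. This gives the inclusion ``$\subseteq$''. For the reverse inclusion, any morphism $m=f_{1}\cdots f_{k}$ with each $f_{i}$ a non-isomorphism lies in $\mathcal{R}^{k}(E)$ because each $f_{i}\in\mathcal{R}(E)$ by \propref{RadOfEICat}, and the ideal $\mathcal{R}^{k}(E)$ is closed under these products by construction.

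There is essentially no hard step here; the content of the corollary is already packaged into \propref{RadOfEICat}. The only thing to be slightly careful about is to note that expanding linear combinations of basis elements in $\Bbbk E$ cannot create new basis morphisms not obtainable as genuine compositions in $E$, since distinct composites in $E$ are linearly independent in $\Bbbk E$ and non-composable pairs simply contribute $0$. Hence no cancellation trickery can enlarge the set of morphisms represented, and the spanning claim follows immediately.
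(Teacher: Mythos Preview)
Your argument is correct and is precisely the immediate unpacking of \propref{RadOfEICat} that the paper intends; indeed, the paper states this corollary without proof. There is nothing to add.
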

Most of the invariants we will discuss are preserved under Morita
equivalence. For studying invariants of this type one can use the
skeleton of the category instead of the category itself. Recall that
a category $E$ is called \emph{skeletal} if it has no distinct isomorphic
objects. The \emph{skeleton} of a category $E$ is the full subcategory
obtained from $E$ by choosing one object of every isomorphism class
(recall that $D$ is a full subcategory of $E$ if $D^{1}(a,b)=E^{1}(a,b)$
for every $a,b\in D^{0}$). It is clear that the skeleton of any category
is a skeletal category and it is unique up to isomorphism. Moreover,
it is clear that a category and its skeleton are equivalent categories.
Algebras of equivalent categories are Morita equivalent (see \cite[Proposition 2.2]{Webb2007})
and hence have the same quiver presentation, Cartan matrix etc. Note
that if $E$ is locally trivial then its skeleton is locally trivial
as well. Skeletal locally trivial categories were called \emph{deltas}
in \cite[Section 22]{Mitchell1972}.

One important observation is that the objects of a skeletal locally
trivial category are partially ordered in a natural way.
\begin{defn}
\label{def:EICatAreDirected}Let $E$ be a skeletal locally trivial
category. Define a relation $\leq_{E}$ on $E^{0}$ by $a\leq_{E}b$
if $E(a,b)\neq\varnothing$.
\end{defn}
\begin{lem}
\cite[Page 170]{Luck}\label{lem:PartialOrderOnLocallyTrivialCat}
$\leq_{E}$ is a partial order.
\end{lem}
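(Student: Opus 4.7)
The plan is to verify the three defining properties of a partial order in turn, using the hypotheses in the natural places: reflexivity uses only that $E$ is a category, transitivity uses closure under composition, and antisymmetry is where both the locally trivial hypothesis and skeletality come into play.

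First I would handle reflexivity: for every object $a \in E^0$ the identity morphism $\id_a$ lies in $E(a,a)$, so $E(a,a) \neq \varnothing$ and hence $a \leq_E a$. Next, for transitivity, suppose $a \leq_E b$ and $b \leq_E c$. Then there exist morphisms $f \in E(a,b)$ and $g \in E(b,c)$, and their composite $g \circ f$ lies in $E(a,c)$, so $E(a,c) \neq \varnothing$ and $a \leq_E c$.

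The main (though still short) step is antisymmetry. Suppose $a \leq_E b$ and $b \leq_E a$, so there are morphisms $f \in E(a,b)$ and $g \in E(b,a)$. Then $g \circ f$ is an endomorphism of $a$ and $f \circ g$ is an endomorphism of $b$. Because $E$ is locally trivial, the only endomorphisms are identities, so $g \circ f = \id_a$ and $f \circ g = \id_b$. Thus $f$ and $g$ are mutually inverse, which means $a$ and $b$ are isomorphic in $E$. Since $E$ is skeletal it has no distinct isomorphic objects, forcing $a = b$.

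The only real subtlety is that antisymmetry genuinely requires both the locally trivial hypothesis (to upgrade a pair of morphisms between $a$ and $b$ into an isomorphism) and skeletality (to conclude that isomorphic objects coincide); dropping either one yields only a preorder. With these three properties established, $\leq_E$ is a partial order on $E^0$, completing the proof.
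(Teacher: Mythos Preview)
Your proof is correct and is exactly the standard argument. The paper does not actually supply its own proof of this lemma; it simply cites \cite[Page 170]{Luck}, so there is nothing further to compare.
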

From now on let $E$ be a finite skeletal and locally trivial category.
We denote the objects of $E$ by $\{e_{1},\ldots,e_{n}\}$ and identify
them with their corresponding identity morphism so we regard $e_{i}$
both as an object and as an identity morphism. We give some easy observations
on the algebra $\mathbb{\Bbbk}E$. The proof of the following lemma
is immediate and will be omitted.
\begin{lem}
\label{lem:PrimitiveIdempotentsCatTrivialMonoids}The set $\{e_{1},\ldots,e_{n}\}$
is a complete set of primitive orthogonal idempotents of $\mathbb{\Bbbk}E$.
\end{lem}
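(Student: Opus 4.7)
The plan is to verify the three defining properties in turn: idempotency, pairwise orthogonality, and summing to $1$, and then to check primitivity. For the first three, I would simply unravel the multiplication of $\Bbbk E$ on the identity morphisms $e_i$. Since each $e_i$ is an identity morphism with $\db(e_i)=\rb(e_i)=e_i$, the definition $m'\cdot m = m'm$ when $\db(m')=\rb(m)$ and $0$ otherwise gives $e_i\cdot e_i = e_i$, and $e_i\cdot e_j = 0$ whenever $i\neq j$ because the objects $e_i$ and $e_j$ are distinct (here skeletality ensures that different indices really label different objects, so that the composability condition genuinely fails).

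To see $\sum_{i=1}^n e_i = 1_{\Bbbk E}$, I would take an arbitrary morphism $m \in E^1$ with $\db(m)=e_k$ and $\rb(m)=e_r$. Then $e_i \cdot m$ is $m$ if $i=r$ and $0$ otherwise, and $m\cdot e_j$ is $m$ if $j=k$ and $0$ otherwise. Summing over all $i$ (resp.\ $j$) gives $(\sum_i e_i)\cdot m = m = m\cdot(\sum_j e_j)$, and linearly extending shows $\sum_i e_i$ acts as a two-sided identity on $\Bbbk E$.

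For primitivity, the key observation is that $e_i(\Bbbk E)e_i$ is spanned precisely by the morphisms $m \in E^1$ with $\db(m)=\rb(m)=e_i$, i.e., by the endomorphisms of $e_i$ in $E$. Because $E$ is locally trivial, the only such endomorphism is $e_i$ itself, so $e_i(\Bbbk E)e_i = \Bbbk e_i \cong \Bbbk$. Since $\Bbbk$ is a field it has no non-trivial idempotents and is therefore a local algebra, giving primitivity of $e_i$ by the characterization recalled in the preliminaries. There is no real obstacle here — the lemma is essentially a direct unpacking of the category-algebra multiplication together with local triviality — which is presumably why the authors omit the proof.
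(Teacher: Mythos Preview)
Your proof is correct and is precisely the direct verification the paper has in mind; indeed, the paper explicitly omits the proof as ``immediate.'' One tiny remark: orthogonality of the $e_i$ does not actually require skeletality---distinct objects always have distinct identity morphisms, and that alone makes the composition $e_i\cdot e_j$ undefined in $E$ and hence $0$ in $\Bbbk E$; skeletality and local triviality together are what you genuinely use in the primitivity step, where they force $E(e_i,e_i)=\{e_i\}$.
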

Another good property of a skeletal locally trivial category $E$
is that $\Bbbk E$ is a split basic algebra.
\begin{cor}
\label{cor:AlgebrasOfLocallyTrivialCatAreBasic}$\mathbb{\Bbbk}E$
is a split basic algebra.
\end{cor}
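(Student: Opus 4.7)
The plan is to apply \propref{RadOfEICat} directly and then identify the quotient. First I would show that in a skeletal locally trivial category the only isomorphisms are the identity morphisms. Indeed, if $f\in E(a,b)$ is an isomorphism with inverse $g\in E(b,a)$, then $gf$ is an endomorphism of $a$ and $fg$ is an endomorphism of $b$, so both must equal the corresponding identities. If $a\neq b$ this contradicts skeletality, hence $a=b$, and then $f$ is itself an endomorphism, forcing $f=e_a$.

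Combining this observation with \propref{RadOfEICat}, the radical $\mathcal{R}(E)=\Rad\Bbbk E$ is spanned precisely by the non-identity morphisms of $E$. Since the identities $\{e_{1},\ldots,e_{n}\}$ together with the non-identity morphisms form a basis of $\Bbbk E$, the cosets $\{e_{1}+\mathcal{R}(E),\ldots,e_{n}+\mathcal{R}(E)\}$ form a basis of the quotient $\Bbbk E/\mathcal{R}(E)$.

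Finally I would compute the multiplication on this basis: by the definition of the category algebra, $e_{i}\cdot e_{j}=e_{i}$ if $i=j$ and $0$ otherwise (since distinct identities have non-matching domain/range). Therefore the linear map sending $e_{i}+\mathcal{R}(E)$ to the $i$-th standard basis vector is an algebra isomorphism $\Bbbk E/\mathcal{R}(E)\simeq\Bbbk^{n}$, which establishes that $\Bbbk E$ is split basic.

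There is no real obstacle here; the only content is the elementary observation that skeletal plus locally trivial forces isomorphisms to be identities, after which \propref{RadOfEICat} and \lemref{PrimitiveIdempotentsCatTrivialMonoids} do all the work.
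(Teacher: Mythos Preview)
Your proof is correct and follows essentially the same approach as the paper: both use \propref{RadOfEICat} together with the observation that in a skeletal locally trivial category the only isomorphisms are identities, so that $\Bbbk E/\Rad\Bbbk E\simeq\Bbbk^{n}$. Your version simply spells out more explicitly why isomorphisms must be identities and why the resulting quotient is $\Bbbk^{n}$.
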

\begin{proof}
\propref{RadOfEICat} implies that $\mathbb{\Bbbk}E/\Rad\mathbb{\Bbbk}E$
is the algebra of the groupoid of isomorphisms of $E$. By \emph{groupoid
of isomorphisms} we mean the subcategory of $E$ with the same set
of objects but whose morphisms are the isomorphisms of $E$. Since
$E$ is skeletal and locally trivial, the only morphisms of this groupoid
are the identity morphisms $\{e_{1},\ldots,e_{n}\}$ so it is clear
that 
\[
\mathbb{\Bbbk}E/\Rad\mathbb{\Bbbk}E\simeq\mathbb{\Bbbk}^{n}
\]
as required. 
\end{proof}
Denote by $Q$ the quiver of $\mathbb{\Bbbk}E$. It will be sometimes
convenient to call it simply the quiver of $E$ (and likewise for
any other invariant of $\mathbb{\Bbbk}E$ we will discuss). The description
of the quiver of a locally trivial category is given in \cite[Theorem 6.14]{Margolis2012}
as a special case of a formula for the quiver of an EI-category (see
\cite[Theorem 4.7]{Li2011} or \cite[Theorem 6.13]{Margolis2012}).
However, for a locally trivial category $E$ it is quite straightforward
to obtain a description for the quiver so we will give it here for
the sake of completeness. The vertices of the quiver of a split basic
algebra are in one to one correspondence with a complete set of primitive
orthogonal idempotents so in our case they are corresponding to $\{e_{1},\ldots,e_{n}\}$
by \lemref{PrimitiveIdempotentsCatTrivialMonoids} and \corref{AlgebrasOfLocallyTrivialCatAreBasic}.
Therefore we can consider $E^{0}$ as the vertex set of $Q$. In order
to describe the arrows of the quiver we need some more notions. 
\begin{defn}
A morphism $m$ of a locally trivial category $E$ is called \emph{irreducible}
if it is a non-isomorphism but whenever $m=m^{\prime}m^{\prime\prime}$,
either $m^{\prime}$ is an isomorphism or $m^{\prime\prime}$ is an
isomorphism.
\end{defn}
\begin{rem}
The definition of an irreducible morphism in a general category can
be found in \cite[subsection 6.1]{Margolis2012}.
\end{rem}
It is easy to see that a morphism $m$ is irreducible if and only
if $m\in\mathcal{R}(E)\backslash\mathcal{R}^{2}(E)$ so the irreducible
morphisms of $E$ correspond to a basis of $\mathcal{R}(E)/\mathcal{R}^{2}(E)$.
 Concluding, we have the following observation:
\begin{lem}[{\cite[Theorem 6.14]{Margolis2012}}]
\label{lem:QuiverOfCatTrivialMonoids}Let $E$ be a finite skeletal
and locally trivial category. The quiver of $E$ is the subgraph of
$E$ having the same set of objects and the irreducible morphisms
as arrows.
\end{lem}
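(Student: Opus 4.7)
The plan is to directly unwind the definition of the ordinary quiver of a split basic algebra in the case $A=\mathbb{\Bbbk}E$. By \lemref{PrimitiveIdempotentsCatTrivialMonoids} the identity morphisms $\{e_1,\ldots,e_n\}$ form a complete set of primitive orthogonal idempotents, and by \corref{AlgebrasOfLocallyTrivialCatAreBasic} the algebra $\mathbb{\Bbbk}E$ is split basic, so the vertex set of its quiver is canonically identified with $E^0$. It therefore remains only to count, for each pair $(e_k,e_r)$, the arrows from $e_k$ to $e_r$ and match them with the irreducible morphisms from $e_k$ to $e_r$.

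Those arrows are in bijection with a $\mathbb{\Bbbk}$-basis of the vector space
\[
(e_r+\mathcal{R}^{2}(E))\bigl(\mathcal{R}(E)/\mathcal{R}^{2}(E)\bigr)(e_k+\mathcal{R}^{2}(E)).
\]
First I will exhibit a canonical basis of $\mathcal{R}(E)/\mathcal{R}^{2}(E)$. By \propref{RadOfEICat} the non-isomorphisms of $E$ form a basis of $\mathcal{R}(E)$, and by \corref{RadToThek} the morphisms of $E$ that can be written as a composition of two non-isomorphisms — i.e., the non-irreducible non-isomorphisms of $E$ — form a basis of $\mathcal{R}^{2}(E)$. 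Consequently, the cosets of the irreducible morphisms constitute a basis of $\mathcal{R}(E)/\mathcal{R}^{2}(E)$, which is exactly the remark recorded just before the statement.

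Next I will act on this basis by the idempotents. Given an irreducible morphism $m$, the category-algebra multiplication rule gives
\[
e_r\cdot m\cdot e_k=\begin{cases}m & \text{if }\db(m)=e_k\text{ and }\rb(m)=e_r,\\ 0 & \text{otherwise,}\end{cases}
\]
and this passes to the quotient $\mathcal{R}(E)/\mathcal{R}^{2}(E)$. Hence the cosets of the irreducible morphisms with domain $e_k$ and range $e_r$ form a basis of $(e_r+\mathcal{R}^{2}(E))(\mathcal{R}(E)/\mathcal{R}^{2}(E))(e_k+\mathcal{R}^{2}(E))$, establishing the required bijection between arrows of the quiver from $e_k$ to $e_r$ and irreducible morphisms in $E(e_k,e_r)$.

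There is no real obstacle: the argument is purely a translation of the definition of the quiver through \propref{RadOfEICat} and \corref{RadToThek}. The only point requiring care is verifying that the irreducible morphisms descend to a linearly independent spanning set in $\mathcal{R}(E)/\mathcal{R}^{2}(E)$, which is immediate once one notices that the morphisms of $E$ sitting in $\mathcal{R}^{2}(E)$ are precisely the non-irreducible non-isomorphisms. Everything else follows from the basis-counting definition of the quiver.
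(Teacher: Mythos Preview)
Your proposal is correct and follows essentially the same approach as the paper. The paper does not give a separate proof but rather records the lemma as the conclusion of the discussion preceding it: the vertex identification via \lemref{PrimitiveIdempotentsCatTrivialMonoids} and \corref{AlgebrasOfLocallyTrivialCatAreBasic}, together with the one-line observation that a morphism is irreducible if and only if it lies in $\mathcal{R}(E)\setminus\mathcal{R}^{2}(E)$, so that the irreducible morphisms form a basis of $\mathcal{R}(E)/\mathcal{R}^{2}(E)$; your write-up simply fleshes out these steps (including the action of the idempotents to isolate each hom-set) in more detail.
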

\lemref{QuiverOfCatTrivialMonoids} has an immediate application regarding
blocks. 
\begin{lem}
\label{lem:ConnectedComponentsOfLocallyTrivialCatAlg}Let $E$ be
a skeletal locally trivial category. Then the number of blocks of
$\mathbb{\Bbbk}E$ is the number of connected components of $E$.
\end{lem}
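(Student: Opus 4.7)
The plan is to reduce the statement to a purely graph-theoretic claim about $E$ and its quiver $Q$, and then prove that claim via a factorization argument. Since $\mathbb{\Bbbk}E$ is split basic by \corref{AlgebrasOfLocallyTrivialCatAreBasic}, the preliminaries already tell us that the number of blocks of $\mathbb{\Bbbk}E$ equals the number of connected components of the ordinary quiver $Q$ (viewed as an undirected graph). By \lemref{QuiverOfCatTrivialMonoids}, $Q$ has the same vertex set as $E$, and its arrows are the irreducible morphisms of $E$. So it is enough to verify that the underlying undirected graph of $E$ and the underlying undirected graph of $Q$ have exactly the same connected components.

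One inclusion is immediate: since $Q$ is a subgraph of $E$ with $Q^0 = E^0$, any two objects lying in one component of $Q$ automatically lie in one component of $E$. The content is in the reverse direction, namely that every non-identity morphism of $E$ already gives a $Q$-path between its endpoints.

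For this, I would observe first that because $E$ is skeletal and locally trivial, the only isomorphisms in $E$ are the identities; hence every non-identity morphism $m : a \to b$ is a non-isomorphism with $a \neq b$ and $a <_E b$ (using the partial order of \defref{EICatAreDirected} and \lemref{PartialOrderOnLocallyTrivialCat}). If $m$ is irreducible, then $m$ is already an arrow of $Q$ connecting $a$ and $b$. Otherwise $m = m'm''$ with both $m', m''$ non-isomorphisms, and the intermediate object $c = \rb(m'') = \db(m')$ satisfies $a <_E c <_E b$. Since the poset $E^0$ is finite, induction on the length of a maximal chain between $a$ and $b$ in $\leq_E$ (or, equivalently, on the nilpotency index given by \corref{RadToThek}) produces a factorization $m = m_1 m_2 \cdots m_k$ into irreducibles, which is exactly a directed path in $Q$ from $a$ to $b$.

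Chaining such paths along any walk in $E$ then shows every component of $E$ is contained in a component of $Q$, and combined with the reverse inclusion above this yields that the two graphs have the same connected components. The main obstacle is essentially notational: one must set up the factorization cleanly and invoke finiteness of the poset $(E^0, \leq_E)$, but no genuinely new idea is required beyond the observations already collected in \propref{RadOfEICat}, \lemref{QuiverOfCatTrivialMonoids}, and the preliminary description of blocks.
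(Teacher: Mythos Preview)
Your argument is correct and follows essentially the same route as the paper: reduce to showing that $Q$ and $E$ have the same connected components, note one inclusion is trivial since $Q\subseteq E$, and for the other factor any non-identity morphism into irreducibles to obtain a $Q$-path. Your version is slightly more explicit about why such a factorization exists (via the finite poset $(E^0,\leq_E)$), whereas the paper simply asserts it.
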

\begin{proof}
Recall that the number of blocks of the algebra $\mathbb{\Bbbk}E$
is the number of connected components of its quiver $Q$ (see \cite[Chapter II, Lemma 2.5]{Assem2006}).
By \lemref{QuiverOfCatTrivialMonoids} we can regard $Q$ as a subgraph
of $E$. It is obvious that two objects in the same connected component
of $Q$ are in the same connected component of $E$. On the other
direction, it is enough to prove that if there is a morphism $m\in E(e,e^{\prime})$
from $e$ to $e^{\prime}$ in $E$ then the objects $e$ and $e^{\prime}$
are in the same connected component of $Q$. Indeed, $m$ can be decomposed
into a composition of irreducible morphisms
\[
m=m_{k}\cdots m_{1}
\]

where $\db(m_{1})=e$ and $\rb(m_{k})=e^{\prime}$. Since $m_{i}\in Q^{1}$
for every $1\leq i\leq k$, we have that $e$ and $e^{\prime}$ are
in the same connected component of $Q$ as required.
\end{proof}
We now turn to describe a quiver presentation of $\mathbb{\Bbbk}E$
where $E$ is a skeletal locally trivial category. As we have already
seen, the primitive idempotents of $\mathbb{\Bbbk}E$ correspond to
the objects of $E$ and an element $x\in\mathbb{\Bbbk}E$ will satisfy
$e_{j}xe_{i}=x$ if and only if it is a linear combination of elements
from $E(e_{i},e_{j})$. Therefore, in this special case we have that
$\mathscr{L}[\mathbb{\Bbbk}E]=L_{\Bbbk}[E]$. In other words, the
$\mathbb{\Bbbk}$-linear category corresponding to the algebra $\Bbbk E$
is just the linearization of the category $E$. 

Therefore, \lemref{PresentationCatToLinearCat} immediately implies
the following result.
\begin{prop}
\label{prop:QuiverPresentationLocallyTrivialCat}Denote by $Q$ the
quiver of $\Bbbk E$ regarded as a subgraph of $E$. Let $R$ be a
relation on $Q^{\ast}$ such that $(Q,R)$ is a category presentation
for $E$. Then $(Q,R)$ is also a quiver presentation for $\mathbb{\Bbbk}E$.
\end{prop}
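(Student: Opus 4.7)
The plan is to observe that \propref{QuiverPresentationLocallyTrivialCat} is essentially a two-step assembly: first verify that $\mathscr{L}(\Bbbk E)$ coincides with the linearization $L_{\Bbbk}[E]$, and then invoke \lemref{PresentationCatToLinearCat} to transfer the category presentation of $E$ to a $\Bbbk$-linear category presentation of this linearization. Since a quiver presentation of $\Bbbk E$ is by definition a pair $(Q,R)$ with $Q$ the ordinary quiver and $R$ a $\Bbbk$-linear relation on $L_{\Bbbk}[Q^{\ast}]$ such that $(Q,R)$ presents $\mathscr{L}(\Bbbk E)$, this will finish the proof immediately.

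First I would verify that the $Q$ of the statement is actually the ordinary quiver of $\Bbbk E$. By \lemref{PrimitiveIdempotentsCatTrivialMonoids} and \corref{AlgebrasOfLocallyTrivialCatAreBasic}, the objects $e_{1},\ldots,e_{n}$ form a complete set of primitive orthogonal idempotents and $\Bbbk E$ is split basic, so the vertex sets match. The arrows of the ordinary quiver correspond to a basis of $\mathcal{R}(E)/\mathcal{R}^{2}(E)$, which by the discussion preceding \lemref{QuiverOfCatTrivialMonoids} are exactly the irreducible morphisms of $E$; hence $Q$ is indeed the quiver regarded as a subgraph of $E$.

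Next I would record the key identification $\mathscr{L}(\Bbbk E) = L_{\Bbbk}[E]$ (already noted in the paragraph preceding the proposition). This holds because for any $x \in \Bbbk E$ one has $e_{j} x e_{i} = x$ iff $x$ lies in the $\Bbbk$-span of $E(e_{i},e_{j})$; thus each hom-space $\mathscr{L}(\Bbbk E)(e_{i},e_{j}) = e_{j}(\Bbbk E)e_{i}$ has basis $E(e_{i},e_{j})$, with composition induced by the composition of $E$, which is precisely the definition of $L_{\Bbbk}[E]$.

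Finally I would apply \lemref{PresentationCatToLinearCat}: given that $(Q,R)$ is a category presentation of $E$, that lemma yields that $(Q,R)$, with $R$ reinterpreted as a $\Bbbk$-linear relation on $L_{\Bbbk}[Q^{\ast}]$, is a $\Bbbk$-linear category presentation for $L_{\Bbbk}[E] = \mathscr{L}(\Bbbk E)$, which is exactly what is required for a quiver presentation of $\Bbbk E$. There is no real obstacle here; the only point that deserves care is the implicit reinterpretation of a category relation $R$ on $Q^{\ast}$ as a $\Bbbk$-linear relation on $L_{\Bbbk}[Q^{\ast}]$, but this reinterpretation is precisely the content of \lemref{PresentationCatToLinearCat} and is handled by the fact that $L_{\Bbbk}$ is a left adjoint and hence preserves the relevant coequalizer.
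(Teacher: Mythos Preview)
Your proposal is correct and follows exactly the paper's approach: the paper establishes the identification $\mathscr{L}[\Bbbk E]=L_{\Bbbk}[E]$ in the paragraph immediately preceding the proposition and then states that \lemref{PresentationCatToLinearCat} immediately yields the result. Your write-up simply spells out those two ingredients (plus the verification that $Q$ really is the ordinary quiver) in slightly more detail.
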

It is easy to describe the Cartan matrix of $\mathbb{\Bbbk}E$. It
is clear that $e_{k}\mathbb{\Bbbk}Ee_{r}$ is the $\Bbbk$-vector
space spanned by the hom-set $E(e_{r},e_{k})$. Therefore, we obtain
the following result.
\begin{lem}
\label{lem:CartanMatrixLocallyTrivial}The Cartan matrix of $\mathbb{\Bbbk}E$
is the $n\times n$ matrix whose $(i,j)$ entry is the number of morphisms
in the hom-set $E(e_{j},e_{i})$.
\end{lem}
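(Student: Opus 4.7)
The plan is to reduce the statement to a direct computation by unwinding the definition of the Cartan matrix in terms of the idempotents identified in \lemref{PrimitiveIdempotentsCatTrivialMonoids}. By definition, the $(i,j)$ entry of the Cartan matrix of $\Bbbk E$ is $\dim_\Bbbk e_i\,\Bbbk E\, e_j$, so the only thing to do is to describe this corner subspace explicitly.

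The key step is to examine the product $e_i \cdot m \cdot e_j$ for an arbitrary basis element $m \in E^1$. Using the multiplication rule of the category algebra recalled in the preliminaries, $m \cdot e_j$ is nonzero (and equal to $m$) exactly when $\db(m) = e_j$, and similarly $e_i \cdot m$ is nonzero (and equal to $m$) exactly when $\rb(m) = e_i$. Thus $e_i\, m\, e_j = m$ if $m \in E(e_j,e_i)$ and equals $0$ otherwise. Since the morphisms of $E$ form a basis of $\Bbbk E$, it follows that $e_i\,\Bbbk E\, e_j$ is the $\Bbbk$-vector space with basis $E(e_j,e_i)$, hence has dimension $|E(e_j,e_i)|$, which is exactly the claim.

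There is no genuine obstacle: once one identifies the primitive orthogonal idempotents with the identity morphisms of the objects of $E$ (which is already provided by \lemref{PrimitiveIdempotentsCatTrivialMonoids}), the result is immediate from how multiplication in $\Bbbk E$ is defined. The locally trivial hypothesis is used only indirectly, through its role in guaranteeing that $\{e_1,\ldots,e_n\}$ is indeed a complete set of primitive orthogonal idempotents; the argument itself is a one-line verification.
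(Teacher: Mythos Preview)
Your argument is correct and is exactly the approach the paper takes: the paper simply observes, in the sentence preceding the lemma, that $e_{i}\,\Bbbk E\,e_{j}$ is the $\Bbbk$-vector space spanned by $E(e_{j},e_{i})$, and you have just spelled out that observation in detail.
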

\begin{rem}
Note that if we order the columns in an ascending order with respect
to the partial order defined in \defref{EICatAreDirected} then the
Cartan matrix of $\mathbb{\Bbbk}E$ is an upper unitriangular matrix.
\end{rem}

\section{\label{sec:RepresentationTheoryOfMonoidsOfPartialFunctions}Representation
theory of some order-related monoids of partial functions}

Recall that $\PO_{n}$ is the monoid of all order-preserving partial
functions, $\PF_{n}$ is the monoid of all order-decreasing partial
functions and $\PC_{n}$ is the monoid of all order-preserving and
order-decreasing partial functions. The goal of this paper is to describe
certain properties of the algebras $\Bbbk\PO_{n}$, $\Bbbk\PF_{n}$
and $\mathbb{\Bbbk}\PC_{n}$. For any $n\in\mathbb{N}$ we define
$[n]=\{1,\ldots,n\}$ and it will be convenient to set $[0]=\varnothing$.
Denote by $\E_{n}$ the category defined as follows. The objects of
$\E_{n}$ are all subsets of $[n]$ and given two subsets $A,B\subseteq[n]$
the hom-set $\E_{n}(A,B)$ consists of all onto functions with domain
$A$ and range $B$. We denote by $\EO_{n}$ the subcategory of $\E_{n}$
with the same set of objects but the hom-set $\EO_{n}(A,B)$ consists
only of order-preserving functions (with domain $A$ and range $B$).
Similarly, we denote by $\EF_{n}$ ($\EC_{n}$) the subcategory whose
morphisms are order-decreasing functions (respectively, order-preserving
\emph{and} order-decreasing functions). In \cite[Section 5]{Stein2016}
the author has proved an isomorphism of algebras 
\[
\mathbb{\Bbbk}\PO_{n}\simeq\mathbb{\Bbbk}\EO_{n},\quad\Bbbk\PF_{n}\simeq\mathbb{\Bbbk}\EF_{n},\quad\mathbb{\Bbbk}\PC_{n}\simeq\mathbb{\Bbbk}\EC_{n}.
\]

Using this isomorphism we were able to describe the quiver of these
algebras \cite[Propositions 5.2, 5.5 and 5.8]{Stein2016}. In this
section we will continue the study of these algebras. It is easy to
see that $\EO_{n}$, $\EF_{n}$, $\EC_{n}$ are all locally trivial
so we can apply the results of \secref{AlgebrasOfLocallyTrivialCat}.
In particular, we obtain a quiver presentation for each of these algebras
using \propref{QuiverPresentationLocallyTrivialCat}. For the sake
of comparison, we mention that monoid presentations of $\PO_{n}$,
$\PF_{n}$ and $\PC_{n}$ can be found in \cite[Section 3]{Fernandes2002},
\cite[Section 2]{Umar2018} and \cite[Section 6]{Umar2018} respectively. 
\begin{rem*}
One can consider also the injective version of this monoids. We denote
by $\IS_{n}$ the monoid of all injective partial functions on an
$n$-element set. We define 
\[
\IO_{n}=\PO_{n}\cap\IS_{n},\quad\IF_{n}=\PF_{n}\cap\IS_{n},\quad\IC_{n}=\PC_{n}\cap\IS_{n}.
\]
It is well known that $\IO_{n}$ is an $\Hc$-trivial inverse monoid
(see \cite{Fernandes1997}) so its algebra over any field is semisimple
and the invariants considered in this paper are trivial.  Description
of properties of $\Bbbk\IC_{n}$ and $\Bbbk\IF_{n}$ can be obtained
in a similar way to what is done for $\Bbbk\PC_{n}$ and $\Bbbk\PF_{n}$
in \Subsecref{The_Partial_Catalan} and \Subsecref{Order_Decreasing_Partial_Functio}
but we will not work out the details in this paper. 
\end{rem*}

\subsection{Order-preserving partial functions}

In this section we will study the representation theory of $\PO_{n}$
using the category $\EO_{n}$ defined above. Note that two objects
$A$ and $B$ in $\EO_{n}$ are isomorphic if and only if $|A|=|B|$
and in this case there is only one order-preserving onto function
from $A$ to $B$. In particular, all the endomorphism monoids are
trivial so $\EO_{n}$ is indeed locally trivial.

\paragraph{Loewy series and Loewy length}

We define a ``defect'' function, 
\[
\df:\EO_{n}^{1}\to\mathbb{N}\cup\{0\}
\]
by 
\[
\df(f)=|A|-|B|
\]
 for $f\in\EO_{n}(A,B)$. Note that $\EO_{n}$ is \emph{graded} with
respect to the function ``$\text{defect}$'', that is, 
\[
\df(gf)=\df(g)+\df(f)
\]
wherever the composition $g\cdot f$ is defined. It is clear that
$\df(f)=0$ if and only if $f$ is an isomorphism so \propref{RadOfEICat}
implies that
\[
\Rad\mathbb{\Bbbk}\EO_{n}=\SP\{f\in(\EO_{n})^{1}\mid\df(f)\geq1\}.
\]

The next proposition gives an explicit basis for all the terms of
the descending Loewy series of $\Bbbk\EO_{n}$. A similar observation
for the category of all epimorphisms is given in \cite[Lemma 4.3]{Stein2016}.
\begin{lem}
\label{lem:LoweySeriesPO}$\Rad^{k}\mathbb{\Bbbk}\EO_{n}=\SP\{f\in(\EO_{n})^{1}\mid\df(f)\geq k\}.$
\end{lem}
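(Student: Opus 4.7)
The claim has two inclusions. One is immediate from the preceding results; the other requires a small constructive argument.

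For the inclusion $\Rad^{k}\Bbbk\EO_{n}\subseteq\SP\{f\in(\EO_{n})^{1}\mid\df(f)\geq k\}$, I would invoke \corref{RadToThek}, which says that $\mathcal{R}^{k}(\EO_{n})$ is spanned by morphisms expressible as a composition of $k$ non-isomorphisms. Because every non-isomorphism $h$ in $\EO_{n}$ satisfies $\df(h)\geq1$, and because $\df$ is additive under composition (as noted before the statement), any composition of $k$ non-isomorphisms has defect at least $k$. This gives the inclusion on basis elements, hence on the full span.

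For the reverse inclusion, it is enough (again by \corref{RadToThek}) to show that every morphism $f\in\EO_{n}(A,B)$ with $\df(f)\geq k$ can be written as a composition of $k$ non-isomorphisms. I would do this by induction on $\df(f)$, reducing to the following factorization lemma: if $f\colon A\twoheadrightarrow B$ is order-preserving and $\df(f)\geq1$, then $f=g\circ h$ where $h$ has defect $1$ and $g$ has defect $\df(f)-1$, both order-preserving and surjective. Once this is established, iterating and grouping the last $\df(f)-k+1$ factors together produces a decomposition into exactly $k$ non-isomorphisms.

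The factorization would be built as follows. Since $|A|>|B|$, some fiber $f^{-1}(b)$ has at least two elements. Because $f$ is order-preserving, $f^{-1}(b)$ is an interval of $A$ (if $x<y<z$ with $f(x)=f(z)=b$, then $b=f(x)\leq f(y)\leq f(z)=b$), so $f^{-1}(b)$ contains two elements $a_{i}<a_{i+1}$ that are consecutive in $A$. Set $A'=A\setminus\{a_{i}\}$, define $h\colon A\to A'$ to be the identity on $A'$ with $h(a_{i})=a_{i+1}$, and take $g=f|_{A'}\colon A'\to B$. Both maps are clearly onto; order-preservation of $h$ follows from consecutiveness of $a_{i},a_{i+1}$ in $A$, and order-preservation of $g$ is inherited from $f$. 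A short computation gives $g\circ h=f$.

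The main obstacle is the factorization lemma, and within it the choice of $a_{i},a_{i+1}$: one must exploit the fact that a fiber is an interval in $A$ to ensure that removing the element $a_{i}$ keeps the remaining map order-preserving. Everything else is bookkeeping with \corref{RadToThek} and the additivity of $\df$.
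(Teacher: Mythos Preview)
Your argument is correct and follows the same overall inductive strategy as the paper, but the factorization step is genuinely different. The paper splits $f\colon A\twoheadrightarrow B$ as $h\circ g$ where $g\colon A\to B\cup\{r\}$ enlarges the \emph{codomain} by inserting an element $r\in[n]\setminus B$ adjacent to a chosen $b$ with large fiber, and $h\colon B\cup\{r\}\to B$ collapses $r$ back; this requires locating a suitable $r$ and a mild case split depending on whether $r$ lies above or below $b$. You instead split $f$ as $g\circ h$ where $h\colon A\to A\setminus\{a_i\}$ shrinks the \emph{domain} by merging two consecutive elements of a single fiber, and $g=f|_{A\setminus\{a_i\}}$. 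Your route is slightly cleaner: it avoids the case analysis on the position of $r$ and needs no auxiliary element outside $B$, relying only on the observation that fibers of an order-preserving map are intervals of $A$. The paper's route, on the other hand, keeps the domain fixed and manipulates only the image side, which is closer in spirit to the later description of irreducible morphisms in $\SEO_n$. Either factorization feeds into the same induction and yields the result.
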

\begin{proof}
Since $\Rad\mathbb{\Bbbk}\EO_{n}$ is spanned by elements $f$ satisfying
$\df(f)\geq1$ it is clear that 
\[
\Rad^{k}\mathbb{\Bbbk}\EO_{n}\subseteq\SP\{f\in(\EO_{n})^{1}\mid\df(f)\geq k\}.
\]
The other inclusion is easily proved by induction. We have already
seen that the case $k=1$ is true. Assume that the statement is true
for $k-1$ and take a morphism $f\in\EO_{n}(A,B)$ such that $\df(f)=|A|-|B|\geq k$.
Now take some $b\in B$ such that $f^{-1}(b)$ contains more then
one element. Clearly, $B$ cannot be all $[n]$. Without loss of generality,
take $r\in[n]\backslash B$ such that $b\leq l<r$ implies $l\in B$
 (if no such $r$ exists, then there must be $r\in[n]\backslash B$
such that $r<l\leq b$ implies $l\in B$ and the proof is similar).
Denote by $a$ the maximal element of $f^{-1}(b)$ and define two
functions $g:A\to B\cup\{r\}$ and $h:B\cup\{r\}\to B$ by 
\[
g(x)=\begin{cases}
f(x)+1 & a\leq x\:\text{and}\:f(x)<r\\
f(x) & \text{otherwise }
\end{cases},\quad h(x)=\begin{cases}
x-1 & b+1\leq x\leq r\\
x & \text{otherwise.}
\end{cases}
\]
It is easy to verify that $g$ and $h$ are well defined order-preserving
onto functions and $hg=f$. By the induction hypothesis $g\in\Rad^{k-1}\mathbb{\Bbbk}\EO_{n}$
and $h\in\Rad\mathbb{\Bbbk}\EO_{n}$ so we are done.
\end{proof}
\begin{cor}
The Loewy length of $\mathbb{\Bbbk}\EO_{n}$ and hence of $\mathbb{\Bbbk}\PO_{n}$
is $n$.
\end{cor}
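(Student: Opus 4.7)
The plan is to read off the Loewy length directly from \lemref{LoweySeriesPO}, which characterizes every term of the descending Loewy series by the defect function. Since the Loewy length is the smallest $k$ with $\Rad^{k}\Bbbk\EO_{n}=0$, I need to identify the maximal possible defect of a morphism in $\EO_{n}$.

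First, I will show that $\Rad^{n}\Bbbk\EO_{n}=0$. By \lemref{LoweySeriesPO} this amounts to showing that no morphism $f\in\EO_{n}(A,B)$ satisfies $\df(f)\geq n$. Indeed, if $A=\varnothing$ then $B=\varnothing$ too (only the empty function exists) so $\df(f)=0$; and if $A\neq\varnothing$ then, since $f$ is onto, $B\neq\varnothing$, giving $|B|\geq 1$ and $|A|\leq n$, hence $\df(f)=|A|-|B|\leq n-1$.

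Next, I will show that $\Rad^{n-1}\Bbbk\EO_{n}\neq 0$ by exhibiting an explicit morphism of defect exactly $n-1$: the constant onto map $f\colon[n]\to\{1\}$ sending every element of $[n]$ to $1$ is clearly order-preserving and surjective, and $\df(f)=n-1$, so by \lemref{LoweySeriesPO} it lies in $\Rad^{n-1}\Bbbk\EO_{n}$.

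Combining both steps yields that the Loewy length of $\Bbbk\EO_{n}$ equals $n$. The statement for $\Bbbk\PO_{n}$ then follows from the isomorphism $\Bbbk\PO_{n}\simeq\Bbbk\EO_{n}$ recalled at the beginning of the section. There is essentially no obstacle; the only point to be careful about is the boundary case $A=\varnothing$, which is why the maximal defect is $n-1$ (achieved by the constant map to a singleton) rather than $n$.
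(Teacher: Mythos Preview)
Your argument is correct and is exactly the intended one: the paper states this as an immediate corollary of \lemref{LoweySeriesPO}, and your reasoning (maximal defect is $n-1$, realized by the constant map $[n]\to\{1\}$, with the $A=\varnothing$ boundary case handled) is precisely what that implicit proof amounts to.
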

We now give an explicit formula for the dimensions of the terms of
the Loewy series. We state the following fact as a separate lemma
for later use.
\begin{lem}[{\cite[Part of Lemma 4.1]{Umar2004b}}]
\label{lem:CountOrderPreserving}Let $A$ and $B$ be subsets of
$[n]$ such that $m=|A|\geq|B|=l$. Then the number of onto order-preserving
functions from $A$ to $B$ is ${m-1 \choose l-1}$.
\end{lem}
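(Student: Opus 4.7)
The plan is to reduce the counting problem to counting compositions of the integer $m$ into $l$ positive parts. Since an order-preserving onto function depends only on the order structure of its domain and codomain, I would first observe that the unique order-isomorphisms $A\to[m]$ and $B\to[l]$ induce a bijection between the set of order-preserving surjections $A\to B$ and the set of order-preserving surjections $[m]\to[l]$. Hence it suffices to count the latter.

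Next, I would show that an order-preserving surjection $f:[m]\to[l]$ is completely determined by the sequence of fibre sizes $a_j=|f^{-1}(j)|$ for $j=1,\ldots,l$. The key point is that order-preservation forces each fibre $f^{-1}(j)$ to be a contiguous interval of $[m]$ (if $x<y<z$ with $f(x)=f(z)=j$ but $f(y)=j'\neq j$, then $j\le j'\le j$, a contradiction) and forces these intervals to appear in increasing order of $j$. Consequently $f$ is given explicitly by
\[
f^{-1}(j)=\{a_1+\cdots+a_{j-1}+1,\ldots,a_1+\cdots+a_j\},\qquad j=1,\ldots,l.
\]
Surjectivity is equivalent to $a_j\ge 1$ for every $j$, and of course $a_1+\cdots+a_l=m$. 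Thus the order-preserving surjections $[m]\to[l]$ are in bijection with the compositions of $m$ into $l$ positive parts.

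Finally, the number of compositions of $m$ into $l$ positive parts is the classical stars-and-bars count $\binom{m-1}{l-1}$, obtained by choosing $l-1$ of the $m-1$ gaps between consecutive elements of $[m]$ to be the cut points separating the fibres. This yields the claimed formula. The argument is entirely combinatorial and there is no real obstacle; the only point that requires any care is the justification that order-preservation plus surjectivity forces each fibre to be a consecutive block, which is the short contradiction sketched above.
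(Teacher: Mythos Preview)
Your proof is correct and is essentially the same argument as the paper's: both observe that the fibres of an order-preserving surjection are consecutive blocks (``convex'' subsets) of the domain, so that the function is determined by the $l-1$ barriers separating these blocks, giving $\binom{m-1}{l-1}$. Your version spells out the reduction to $[m]\to[l]$ and the bijection with compositions more explicitly, but the underlying idea is identical.
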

\begin{proof}
Let $f:A\to B$ be an order-preserving function. Recall that $\ker f$
is the equivalence relation defined on $A$ by 
\[
a_{1}(\ker f)a_{2}\iff f(a_{1})=f(a_{2})
\]
The claim follows by observing that $\ker f$ divides $A$ into $l$
``convex'' subsets and there are ${m-1 \choose l-1}$ ways to choose
``barriers'' between these subsets.
\end{proof}
\begin{rem}
Note that \lemref{CountOrderPreserving} is true also for the case
where $l=0$ (i.e., $B$ is the empty set).
\end{rem}
\begin{lem}
$\dim\Rad^{k}\mathbb{\Bbbk}\PO_{n}={\displaystyle \sum_{m=k+1}^{n}\sum_{l=1}^{m-k}{n \choose m}{n \choose l}}{m-1 \choose l-1}$.
\end{lem}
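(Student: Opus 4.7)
The plan is to combine the isomorphism $\mathbb{\Bbbk}\PO_{n}\simeq\mathbb{\Bbbk}\EO_{n}$ from \cite{Stein2016} with \lemref{LoweySeriesPO} and the counting formula \lemref{CountOrderPreserving}. Since the isomorphism preserves the Jacobson radical and all its powers, it suffices to compute $\dim\Rad^{k}\mathbb{\Bbbk}\EO_{n}$. By \lemref{LoweySeriesPO}, a basis for $\Rad^{k}\mathbb{\Bbbk}\EO_{n}$ is given by the set of all morphisms $f\in(\EO_{n})^{1}$ with $\df(f)\geq k$, so the dimension equals the cardinality of this set.

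Next I would partition these morphisms according to the sizes $m=|A|$ and $l=|B|$ of their domain and range. The condition $\df(f)=m-l\geq k$ together with $l\geq 1$ (morphisms into the empty set force $m=0$, giving defect $0$, which is excluded for $k\geq 1$) yields the index ranges $k+1\leq m\leq n$ and $1\leq l\leq m-k$. For each fixed pair $(m,l)$ in this range, the number of morphisms is obtained as follows: there are $\binom{n}{m}$ choices for the subset $A\subseteq[n]$ of size $m$, independently $\binom{n}{l}$ choices for $B\subseteq[n]$ of size $l$, and by \lemref{CountOrderPreserving} exactly $\binom{m-1}{l-1}$ onto order-preserving functions $A\to B$ for each such pair.

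Multiplying and summing over the admissible index range gives
\[
\dim\Rad^{k}\mathbb{\Bbbk}\EO_{n}=\sum_{m=k+1}^{n}\sum_{l=1}^{m-k}\binom{n}{m}\binom{n}{l}\binom{m-1}{l-1},
\]
which, via the algebra isomorphism, equals $\dim\Rad^{k}\mathbb{\Bbbk}\PO_{n}$ as claimed.

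There is no real obstacle in this argument; the only subtle point is the bookkeeping at the boundary $l=0$, which I have already handled above by observing that the only candidate morphism (the empty function $\varnothing\to\varnothing$) has defect $0$ and hence contributes nothing to $\Rad^{k}$ for $k\geq 1$. Everything else is a direct application of the two lemmas.
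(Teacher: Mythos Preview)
Your proof is correct and follows essentially the same approach as the paper: invoke \lemref{LoweySeriesPO} to reduce to counting onto order-preserving functions $f:A\to B$ with $|A|-|B|\geq k$, then stratify by $m=|A|$ and $l=|B|$ and apply \lemref{CountOrderPreserving}. The only difference is that you spell out the boundary case $l=0$ and the passage through the isomorphism $\Bbbk\PO_{n}\simeq\Bbbk\EO_{n}$ more explicitly than the paper does.
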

\begin{proof}
By \lemref{LoweySeriesPO} we need to count all onto order-preserving
functions $\mbox{\ensuremath{f:A\to B}}$ (where $A,B\subseteq[n]$)
such that $|A|-|B|\geq k$. There are ${n \choose m}$ ways to choose
a domain set $A$ of size $m$ and ${n \choose l}$ ways to choose
an image set $B$ of size $l$. By \lemref{CountOrderPreserving}
there are ${m-1 \choose l-1}$ onto order-preserving functions from
$A$ to $B$. Now all that is left is to sum up all the possible sizes
for the domain and image.
\end{proof}
We now want to discuss some invariants of $\Bbbk\EO_{n}\simeq\mathbb{\Bbbk}\PO_{n}$
which are preserved by Morita equivalence. As explained in \secref{AlgebrasOfLocallyTrivialCat},
the algebra of the skeleton of $\EO_{n}$ is Morita equivalent to
$\mathbb{\Bbbk}\EO_{n}$ so we can pass to the skeleton of $\EO_{n}$,
that is, the full subcategory obtained from $\EO_{n}$ by choosing
one object of every isomorphism class. We will denote this skeleton
by $\SEO_{n}$. As mentioned above, two objects $A$ and $B$ of $\EO_{n}$
are isomorphic if and only if $|A|=|B|$. So we can regard $\SEO_{n}$
as the following category. The objects of $\SEO_{n}$ are the sets
$[k]$ for $0\leq k\leq n$ and the morphisms are all total order-preserving
epimorphisms between them. Note that $\Bbbk\SEO_{n}$ is a split basic
algebra by \corref{AlgebrasOfLocallyTrivialCatAreBasic}. 

\paragraph*{Blocks}

If $M$ is a monoid with zero, it is well known (see \cite[Remark 5.3]{Steinberg2015})
that its algebra can be decomposed into: 
\[
\mathbb{\Bbbk}M\simeq\mathbb{\Bbbk}\{0\}\times\Bbbk_{0}M\simeq\mathbb{\Bbbk}\times\Bbbk_{0}M.
\]
It is clear that $\SEO_{n}$ has two connected components (with $[0]$
being an isolated vertex). Therefore, \lemref{ConnectedComponentsOfLocallyTrivialCatAlg}
implies that the algebra $\mathbb{\mathbb{\Bbbk}\PO}_{n}$ also has
precisely two blocks. Therefore, we obtain the following corollary.
\begin{lem}
\label{lem:DecompositionOfCPTnToConnectedComponents}The decomposition
of $\Bbbk\PO_{n}$ into a direct product of connected algebras is

\[
\mathbb{\Bbbk}\PO_{n}\simeq\mathbb{\Bbbk}\times\mathbb{\Bbbk}_{0}\PO_{n}.
\]
\end{lem}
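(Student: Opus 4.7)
The plan is to combine two observations: (1) that $\PO_n$ has a zero element (namely the empty partial function), which forces a factor of $\Bbbk$ to split off, and (2) that the results of Section 3 together with the isomorphism $\Bbbk\PO_n\simeq\Bbbk\EO_n$ bound the number of blocks by exactly two, forcing the remaining factor to be connected.

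First I would note that the empty partial function $0\in\PO_n$ is a zero element, so by the general observation recalled just before the statement we have
\[
\Bbbk\PO_n\simeq\Bbbk\{0\}\times\Bbbk_0\PO_n\simeq\Bbbk\times\Bbbk_0\PO_n.
\]
This already gives the desired factorisation as algebras; what remains is to show that this is actually the decomposition into connected (block) components, i.e.\ that $\Bbbk_0\PO_n$ is connected.

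Next I would pass to the category side. Using $\Bbbk\PO_n\simeq\Bbbk\EO_n$ together with the Morita equivalence $\Bbbk\EO_n\sim\Bbbk\SEO_n$ (which preserves the number of blocks, since Morita equivalence preserves centres), it suffices by \lemref{ConnectedComponentsOfLocallyTrivialCatAlg} to count the connected components of $\SEO_n$. The objects of $\SEO_n$ are the sets $[0],[1],\ldots,[n]$. The object $[0]$ is isolated: an onto map with empty domain must have empty range, and conversely, so there are no morphisms between $[0]$ and $[k]$ for $k\geq 1$. On the other hand, for any $1\leq k\leq n$ there is at least one order-preserving onto map $[k]\to[k-1]$ (collapse two consecutive elements), so $[1],\ldots,[n]$ all lie in one connected component. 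Hence $\SEO_n$ has exactly two connected components, and therefore $\Bbbk\PO_n$ has exactly two blocks.

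Finally I would match the two descriptions. The factorisation from the first paragraph already exhibits two algebra direct factors, and since we have just shown there are at most two blocks, both factors must themselves be connected. In particular $\Bbbk_0\PO_n$ is connected, so
\[
\Bbbk\PO_n\simeq\Bbbk\times\Bbbk_0\PO_n
\]
is precisely the decomposition into a direct product of connected algebras. The only mildly delicate step is the matching argument, but it is forced by a dimension/idempotent count since $\Bbbk$ corresponds to the central idempotent $[0]$ and the complementary central idempotent then cuts out the connected algebra $\Bbbk_0\PO_n$; no genuine obstacle arises.
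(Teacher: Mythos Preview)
Your argument is correct and follows exactly the paper's approach: split off the $\Bbbk$ factor coming from the zero element, then use \lemref{ConnectedComponentsOfLocallyTrivialCatAlg} applied to $\SEO_n$ (which has the isolated object $[0]$ and one other component) to conclude there are precisely two blocks, forcing $\Bbbk_0\PO_n$ to be connected. The only difference is that you spell out more of the details (the Morita-invariance of block count and the explicit connectedness of $[1],\ldots,[n]$) than the paper does.
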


\paragraph*{Cartan matrix}

The category $\SEO_{n}$ has $n+1$ objects so by \lemref{CartanMatrixLocallyTrivial}
the Cartan matrix is an $(n+1)\times(n+1)$ upper unitriangular matrix
according to a natural ordering. The $(i,j)$ entry is the number
of arrows from $[j-1]$ to $[i-1]$ so by \lemref{CountOrderPreserving}
we obtain the following result.
\begin{lem}
\label{lem:CartanMatrixOfPOPascalMatrix}The Cartan matrix of $\mathbb{\Bbbk}\PO_{n}$
and $\mathbb{\Bbbk}\EO_{n}$ is an $(n+1)\times(n+1)$ upper unitriangular
matrix whose $(i,j)$ entry is ${j-2 \choose i-2}$ for $j\geq i$.
\end{lem}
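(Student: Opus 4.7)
The statement is essentially a direct application of the two lemmas already in hand, \lemref{CartanMatrixLocallyTrivial} and \lemref{CountOrderPreserving}, with all the preparatory work already done in this subsection.

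My plan is to exploit the isomorphism $\Bbbk\PO_{n}\simeq\Bbbk\EO_{n}$ together with the Morita equivalence $\Bbbk\EO_{n}\sim\Bbbk\SEO_{n}$ established in the discussion preceding the statement, so that it suffices to compute the Cartan matrix of the finite skeletal locally trivial category $\SEO_{n}$, whose objects are $[0],[1],\ldots,[n]$. This gives $n+1$ objects, hence an $(n+1)\times(n+1)$ matrix, which I label with rows and columns indexed by $i,j\in\{1,\ldots,n+1\}$ corresponding to the objects $e_{i}=[i-1]$, $e_{j}=[j-1]$ (i.e.\ ordered by cardinality).

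Next, I would apply \lemref{CartanMatrixLocallyTrivial}: the $(i,j)$ entry equals $|\SEO_{n}([j-1],[i-1])|$, the number of order-preserving onto functions from $[j-1]$ to $[i-1]$. When $j<i$, the domain is strictly smaller than the codomain, so no onto function exists and the entry is $0$, proving upper triangularity. When $j\geq i$, \lemref{CountOrderPreserving} (with $m=j-1$ and $l=i-1$, which applies also in the degenerate case $l=0$ by the remark following that lemma) gives the entry $\binom{m-1}{l-1}=\binom{j-2}{i-2}$.

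For unitriangularity, I would check the diagonal case $j=i$ separately: the only order-preserving onto function from $[i-1]$ to itself is the identity, so the entry is $1$, matching $\binom{i-2}{i-2}=1$ (with the usual convention that $\binom{-1}{-1}=1$ covers the anomalous $i=1$ case, corresponding to the unique empty function $[0]\to[0]$). There is no substantial obstacle here; the only thing to watch is a consistent indexing convention so that the numerical formula $\binom{j-2}{i-2}$ comes out correctly, and that the edge case $i=1$ is handled explicitly.
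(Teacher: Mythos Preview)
Your proposal is correct and follows essentially the same approach as the paper: the paper also reduces to the skeletal category $\SEO_{n}$, applies \lemref{CartanMatrixLocallyTrivial} to identify the $(i,j)$ entry with $|\SEO_{n}([j-1],[i-1])|$, and then invokes \lemref{CountOrderPreserving} to obtain $\binom{j-2}{i-2}$. Your treatment is slightly more explicit about the edge cases and the indexing, but there is no substantive difference.
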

\begin{rem}
The upper triangular $n\times n$ matrix whose $(i,j)$ entry for
$\mbox{\ensuremath{j\geq i}}$ is ${j-1 \choose i-1}$ is called the
upper-triangular Pascal matrix of size $n$. Therefore, \lemref{CartanMatrixOfPOPascalMatrix}
says that the Cartan matrix of $\mathbb{\Bbbk}\PO_{n}$ can be viewed
as an extended upper-triangular Pascal matrix where $0\leq i\leq j\leq n$.
\end{rem}

\paragraph*{Quiver presentation}

Now we will describe the quiver presentation of $\mathbb{\Bbbk}\SEO_{n}$.
By \lemref{QuiverOfCatTrivialMonoids} in order to describe the quiver
we need only to identify the irreducible morphisms. 
\begin{lem}[{\cite[Lemma 5.1]{Stein2016}}]
A morphism $m\in\SEO_{n}([k],[r])$ is irreducible if and only if
$k=r+1$.
\end{lem}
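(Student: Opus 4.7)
The approach combines two observations from the preamble. First, as noted just before \lemref{QuiverOfCatTrivialMonoids}, in any finite skeletal locally trivial category $E$ a morphism $m$ is irreducible iff $m\in\mathcal{R}(E)\setminus\mathcal{R}^2(E)$. Second, the argument of \lemref{LoweySeriesPO} applies verbatim to the skeleton $\SEO_n$: given $m\in\SEO_n([k],[\ell])$ with $\df(m)=k-\ell\geq 1$, the construction there (pick $b\in[\ell]$ with $|m^{-1}(b)|\geq 2$, then adjoin the element $\ell+1$) produces an intermediate object $[\ell]\cup\{\ell+1\}=[\ell+1]$, which already lies in $\SEO_n^0$, so no passage to $\EO_n$ is needed. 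This yields $\mathcal{R}^j(\SEO_n)=\SP\{f\in\SEO_n^1\mid\df(f)\geq j\}$.

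Because the morphisms of $\SEO_n$ form a basis of $\Bbbk\SEO_n$, a morphism $m\in\SEO_n([k],[\ell])$ belongs to $\mathcal{R}^j(\SEO_n)$ iff $\df(m)\geq j$. Hence $m\in\mathcal{R}(\SEO_n)\setminus\mathcal{R}^2(\SEO_n)$ iff $\df(m)=k-\ell=1$, i.e., iff $k=\ell+1$, which is the claim (renaming $\ell$ back to $r$).

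One can also verify the $(\Leftarrow)$ direction directly and more transparently, bypassing the radical: if $k=r+1$ and $m=gh$ factors with $h\in\SEO_n([k],[s])$, $g\in\SEO_n([s],[r])$, then surjectivity of $h$ and $g$ forces $r\leq s\leq k=r+1$, so $s\in\{r,r+1\}$; in either case one of $g,h$ is a morphism between equal-sized objects of a skeletal locally trivial category, hence the identity and so an isomorphism. The actual work is therefore in the $(\Rightarrow)$ direction, and the main (and only) obstacle is checking that the factorization of \lemref{LoweySeriesPO} stays inside the set of standard representatives of $\SEO_n$---which, as noted above, it does automatically because $B=[\ell]$ makes $B\cup\{\ell+1\}$ itself a standard representative.
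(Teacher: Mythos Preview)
The paper does not supply its own proof of this lemma; it is quoted verbatim from \cite[Lemma~5.1]{Stein2016} with no argument given here. So there is nothing in the present paper to compare your argument against directly.

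That said, your proof is correct. The adaptation of \lemref{LoweySeriesPO} to $\SEO_n$ is sound: with $B=[\ell]$ the only available choice of $r\in[n]\setminus B$ satisfying the required condition is $r=\ell+1$, and then the intermediate object $B\cup\{r\}=[\ell+1]$ is indeed one of the chosen representatives, so the factorisation stays in $\SEO_n$. From this the characterisation $\mathcal{R}^j(\SEO_n)=\SP\{f\mid\df(f)\geq j\}$ follows, and hence the equivalence of irreducibility with $\df(m)=1$.

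Your route is somewhat heavier than necessary, however. For the $(\Rightarrow)$ direction one does not need the full description of $\mathcal{R}^j$ for every $j$; it suffices to exhibit, for any $m\in\SEO_n([k],[r])$ with $k-r\geq 2$, a single nontrivial factorisation. The construction you cite already does exactly this (factor through $[r+1]$), so invoking the inductive Loewy-series statement is overkill: one application of the splitting step, rather than the whole induction, finishes the job. Combined with the direct $(\Leftarrow)$ argument you sketch at the end, this gives a self-contained two-line proof that does not pass through $\mathcal{R}(E)\setminus\mathcal{R}^2(E)$ at all.
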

By \lemref{CountOrderPreserving} there are precisely $k$ order preserving
onto functions from $[k+1]$ to $[k]$ so we deduce the following
corollary which is \cite[Proposition 5.2]{Stein2016}.
\begin{cor}
\label{cor:QuiverOfPO_n}The vertex set of the quiver of $\mathbb{\mathbb{\Bbbk}\PO}_{n}$
is $\{[0],\ldots,[n]\}$. There are $k$ arrows from $[k+1]$ to $[k]$,
for $k=0,\ldots,n-1$, and no other arrows.
\end{cor}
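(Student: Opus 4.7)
The plan is to assemble this directly from the results established just before the statement, since all the hard work has already been done. First I would invoke the Morita equivalence between $\Bbbk\PO_n\simeq\Bbbk\EO_n$ and its skeletal version $\Bbbk\SEO_n$, which was established in the discussion preceding the Loewy-series computation. Since the quiver is a Morita invariant, it suffices to compute the quiver of $\Bbbk\SEO_n$, whose objects are exactly $\{[0],[1],\ldots,[n]\}$; this pins down the vertex set of the quiver.

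Next I would apply \lemref{QuiverOfCatTrivialMonoids} to $\SEO_n$, which is a finite skeletal locally trivial category. This identifies the arrows of the quiver with the irreducible morphisms of $\SEO_n$. By the irreducibility lemma stated immediately before the corollary (\cite[Lemma 5.1]{Stein2016}), a morphism $m\in\SEO_n([k],[r])$ is irreducible if and only if $k=r+1$. Hence there are no arrows between $[k]$ and $[r]$ unless $k=r+1$, so the only possible arrows are those from $[k+1]$ to $[k]$ for $0\leq k\leq n-1$.

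Finally I would count the number of such arrows. Every morphism in $\SEO_n([k+1],[k])$ is automatically irreducible (by the previous lemma), so the number of arrows from $[k+1]$ to $[k]$ equals the total number of order-preserving onto functions from $[k+1]$ to $[k]$. Applying \lemref{CountOrderPreserving} with $m=k+1$ and $l=k$ yields $\binom{m-1}{l-1}=\binom{k}{k-1}=k$, which gives exactly the claimed count.

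Given how cleanly this follows, I do not expect a genuine obstacle; the proof is essentially a bookkeeping step. The only thing to be careful about is the boundary case $k=0$: the empty object $[0]$ is isolated, and there is a unique (empty) order-preserving onto function from $[1]$ to $[0]$, matching the count of $k=0$ arrows correctly interpreted (that is, one arrow when $k=1$ pointing to $[0]$, and no arrows out of $[0]$). This is consistent with the formula and with the earlier observation that $\SEO_n$ has two connected components with $[0]$ isolated only at the level of strict arrows --- actually note that the single morphism $[1]\to[0]$ does exist, so $[0]$ is not literally isolated; what makes it a separate block is addressed via \lemref{DecompositionOfCPTnToConnectedComponents} and is not needed here.
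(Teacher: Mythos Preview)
Your argument is exactly the paper's: pass to the skeleton $\SEO_n$, invoke \lemref{QuiverOfCatTrivialMonoids}, use the irreducibility characterization $k=r+1$, and count via \lemref{CountOrderPreserving}. That part is fine.

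The final paragraph on the boundary case $k=0$ is muddled and contains an actual error: there is \emph{no} function from $[1]=\{1\}$ onto $[0]=\varnothing$, so there is no morphism $[1]\to[0]$ in $\SEO_n$. The object $[0]$ really is isolated (cf.\ the paper's remark preceding \lemref{DecompositionOfCPTnToConnectedComponents}). This is consistent with the statement, which asserts $k=0$ arrows from $[1]$ to $[0]$, and with \lemref{CountOrderPreserving}, which gives $\binom{0}{-1}=0$ in this case. Just delete the speculative aside; the proof needs nothing beyond the three lemmas you cite.
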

By \propref{QuiverPresentationLocallyTrivialCat} we know that we
just need to find a presentation for $\SEO_{n}$ (where the generators
are the morphisms of the quiver $Q$). Denote by $\SEO_{n}^{\bullet}$
the category obtained from $\SEO_{n}$ by removing the isolated vertex
$\varnothing$. Clearly $\SEO_{n}$ and $\SEO_{n}^{\bullet}$ have
the same presentation relation. Instead of finding a presentation
for $\SEO_{n}$ directly, we will show that $\SEO_{n}^{\bullet}$
is isomorphic to another category whose presentation is well-known.
Recall that a strict order-preserving function $g:X\to Y$ for some
posets $(X,\leq)$ and $(Y,\leq)$ is a function for which $x_{1}<x_{2}$
implies $g(x_{1})<g(x_{2})$. Clearly any strict order-preserving
function between linear posets is injective. Denote by $\Delta$ the
category defined as follows: The vertices are $[k]$ for $0\leq k$
and the morphisms from $[r]$ to $[k]$ are all the strict order-preserving
functions between those sets (including a unique empty function $f:[0]\to[k]$
for every $k$). Clearly, $\Delta([r],[k])$ is an empty set unless
$r\leq k$. We denote by $\Delta_{n}$ the full subcategory of $\Delta$
whose objects are $[k]$ for $0\leq k\leq n$.

The category $\Delta$ is a fundamental tool in algebraic topology
(see \cite{Sanderson1971}). A $\Delta$-set (or a semi-simplicial
set) is a contravariant functor from $\Delta$ to the category of
sets. It is an object similar to a simplicial set but lacks its degeneracy
maps. Therefore, the opposite category $\Delta_{n}^{\op}$ captures
in some sense the idea of face maps between $k$-simplices up to dimension
$n$. Take some linearly ordered set $\{v_{0},\ldots,v_{k}\}$ of
$k+1$ vectors in $\mathbb{R}^{n}$ such that $v_{1}-v_{0},\ldots,v_{k}-v_{0}$
are all linearly independent. The convex set spanned by $\{v_{0},\ldots,v_{k}\}$
is called an $k$-simplex. A \emph{face} of an $k$-simplex is a convex
set spanned by some (ordered) subset of $\{v_{0},\ldots,v_{k}\}$
(where a \emph{$r$-dimensional face} is a face spanned by $r+1$
vectors). A \emph{face map }is a function that sends the simplex to
one of its faces. Note that a face map of a $k$-simplex to an $r$-dimensional
face is determined by a choice of $k-r$ elements to ``delete''.
One can view $\Delta_{n}^{\op}$ as the category of all face maps
between $k$-simplices for $0\leq k\leq n$. However, the category
$\Delta_{n}$ has several other notations in the literature. We claim:
\begin{prop}
\label{prop:IsomorphismOfOrderPreservingAndReverseStrictlyOrderPreserving}For
every natural $n$, the categories $\SEO_{n+1}^{\bullet}$ and $\Delta_{n}^{\op}$
are isomorphic.
\end{prop}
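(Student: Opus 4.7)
I would construct an explicit isomorphism of categories $\Phi\colon \SEO_{n+1}^{\bullet} \to \Delta_n^{\op}$ via the standard ``barrier'' encoding of order-preserving surjections. On objects, set $\Phi([k]) = [k-1]$ for $1 \leq k \leq n+1$, which is a bijection onto $\{[0], \ldots, [n]\}$. For a morphism $f\colon [m] \twoheadrightarrow [l]$ in $\SEO_{n+1}^{\bullet}$, its fibers partition $[m]$ into $l$ consecutive non-empty intervals $I_i = [a_i,b_i]$ with $a_1 = 1$, $b_l = m$ and $a_{i+1} = b_i + 1$. I would define $\Phi(f)\colon [l-1] \to [m-1]$ by $\Phi(f)(i) = b_i = \max f^{-1}(i)$. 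Since $b_1 < b_2 < \cdots < b_{l-1} < b_l = m$, this is a strictly order-preserving function $[l-1] \to [m-1]$, i.e., a morphism from $[m-1]$ to $[l-1]$ in $\Delta_n^{\op}$.

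Bijectivity on hom-sets follows by exhibiting an inverse: given a strictly order-preserving $g\colon [l-1] \to [m-1]$, set $g(0) = 0$ and $g(l) = m$, and define $f_g\colon [m] \twoheadrightarrow [l]$ by $f_g(x) = i$ whenever $g(i-1) < x \leq g(i)$. Strict monotonicity of $g$ ensures each such interval is non-empty, so $f_g$ is an order-preserving surjection, and one checks immediately that $\Phi(f_g) = g$ and $f_{\Phi(f)} = f$.

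It remains to verify functoriality. The identity $\id_{[m]}$ has $b_i = i$, so $\Phi(\id_{[m]}) = \id_{[m-1]}$. For composition, take $f_1\colon [m] \twoheadrightarrow [l]$ with fiber endpoints $b_1 < \cdots < b_{l-1}$ and $f_2\colon [l] \twoheadrightarrow [p]$ with fiber endpoints $b'_1 < \cdots < b'_{p-1}$. The fibers of $f_2 f_1$ are the unions $\bigcup_{i \in f_2^{-1}(j)} I_i$, whose maxima are precisely $b_{b'_j}$ for $j = 1, \ldots, p-1$. Hence $\Phi(f_2 f_1)(j) = b_{b'_j} = \Phi(f_1)\bigl(\Phi(f_2)(j)\bigr)$, which is exactly the composition of $\Phi(f_2)$ followed by $\Phi(f_1)$ in $\Delta_n$, i.e., the composite $\Phi(f_2) \cdot \Phi(f_1)$ in $\Delta_n^{\op}$.

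The main potential obstacle is purely bookkeeping: one must be careful that the index shift $[k] \mapsto [k-1]$ and the opposite convention in $\Delta_n^{\op}$ combine correctly so that composition in $\SEO_{n+1}^{\bullet}$ matches composition in $\Delta_n^{\op}$ (and not in $\Delta_n$). Once the barrier encoding is in place, however, the identification is essentially forced and the computation above confirms it.
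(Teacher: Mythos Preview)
Your argument is correct and is essentially the same barrier-encoding proof that the paper gives, with one cosmetic difference: the paper records the \emph{left} endpoints $\min f^{-1}(i)$ of the fibres (which always fix $1$), and for that reason routes the isomorphism through an auxiliary category $\Gamma_{n+1}$ of strict order-preserving maps fixing $1$ before shifting indices down to $\Delta_n$; you instead record the \emph{right} endpoints $\max f^{-1}(i)$ for $i<l$ and land directly in $\Delta_n$. Your version is slightly more economical since it avoids the intermediate category, but the underlying bijection and the verification of functoriality are the same computation in both proofs.
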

\begin{proof}
We will prove that $(\SEO_{n+1}^{\bullet})^{\op}$ is isomorphic to
$\Delta_{n}$. Denote by $\Gamma_{n}$ the category defined as follows:
The objects are $[k]$ for $1\leq k\leq n$ and the morphisms from
$[r]$ to $[k]$ are all the strict order-preserving functions $g:[r]\to[k]$
such that $g(1)=1$. Note that one can think of any function $g:[r]\to[k]$
as a function from $\{2,\ldots,r+1\}$ to $\{2,\ldots,k+1\}$ and
then add a fixed point $g(1)=1$. So it should be clear that $\Gamma_{n+1}$
is isomorphic to $\Delta_{n}$. To be explicit, one can consider a
functor $\mathcal{F}:\Delta_{n}\to\Gamma_{n+1}$ defined on objects
by $\mathcal{F}([k])=[k+1]$ and for every $g\in\Delta_{n}([r],[k])$
the morphism $\mathcal{F}(g):[r+1]\to[k+1]$ is defined by 
\[
\mathcal{F}(g)(i)=\begin{cases}
1 & i=1\\
g(i-1)+1 & \text{otherwise.}
\end{cases}
\]
It is easy to check that $\mathcal{F}$ is a functor and it has an
inverse $\mathcal{F}^{-1}:\Gamma_{n+1}\to\Delta_{n}$ given on morphisms
by 
\[
\mathcal{F}^{-1}(g)(i)=g(i+1)-1
\]
so $\Delta_{n}$ and $\Gamma_{n+1}$ are isomorphic categories. We
will now prove that $(\SEO_{n+1}^{\bullet})^{\op}$ is isomorphic
to $\Gamma_{n+1}$. We can think of morphisms of $(\SEO_{n+1}^{\bullet})^{\op}$
as being the inverses of the morphisms of $\SEO_{n+1}^{\bullet}$.
The inverse $f^{-1}:[k]\to[r]$ of some function $f:\in\SEO_{n+1}^{\bullet}([r],[k])$
is usually not a function, but we can work with it as a relation.
Define a functor $\mathcal{G}:$$(\SEO_{n+1}^{\bullet})^{\op}\to\Gamma_{n+1}$
in the following way. On objects $\mathcal{G}$ is the identity function
and on morphisms $\mathcal{G}$ is defined by 
\[
\mathcal{G}(f^{-1})(i)=\min f^{-1}(i).
\]
It is easy to observe that $\min f^{-1}(i)$ is a strict order-preserving
function and $\min f^{-1}(1)=1$. $\mathcal{G}$ is indeed a functor.
It is obvious that $\mathcal{G}$ sends identity morphisms to identity
morphisms. Moreover, for every two morphisms $f_{1}\in\SEO_{n+1}^{\bullet}([r],[k])$
and $f_{2}\in\SEO_{n+1}^{\bullet}([k],[m])$ we have that $\mathcal{G}(f_{2}^{-1}f_{1}^{-1})=\min f_{2}^{-1}(f_{1}^{-1}(i))$
and $\mathcal{G}(f_{2}^{-1})G(f_{1}^{-1})=\min f_{2}^{-1}(\min f_{1}^{-1}(i))$.
Since $f_{2}$ is order-preserving, it is clear that the minimal element
of $f_{2}^{-1}(f_{1}^{-1}(i))$ will be in the set $f_{2}^{-1}(\min f_{1}^{-1}(i))$
so 
\[
\mathcal{G}(f_{2}^{-1}f_{1}^{-1})=\min f_{2}^{-1}(f_{1}^{-1}(i))=\min f_{2}^{-1}(\min f_{1}^{-1}(i))=\mathcal{G}(f_{2}^{-1})\mathcal{G}(f_{1}^{-1})
\]
which proves that $\mathcal{G}$ is a functor. It is also easy to
see that $\mathcal{G}$ has an inverse. For a given $g\in\Gamma_{n+1}([k],[r])$
the inverse $\mathcal{G}^{-1}$ is given by 
\[
\mathcal{G}^{-1}(g)=f^{-1}
\]
where $f:[r]\to[k]$ is the order-preserving onto function that sends
$j\in[r]$ to the maximal $i$ such that $g(i)\leq j$. Again it is
easy to check that this is indeed an inverse and establish that $\mathcal{G}$
is an isomorphism. So $(\SEO_{n+1}^{\bullet})^{\op}$ is isomorphic
to $\Gamma_{n+1}$ and hence to $\Delta_{n}$ as required.
\end{proof}
In order to give a presentation, we will need some notation for the
arrows in the quiver. Clearly every $f\in\SEO_{n+1}([k+1],[k])$ (for
$k\geq1)$ is determined by choosing one pair of successive numbers
that will be sent to the same image. Hence, we can denote the arrows
in the quiver by $d_{i}^{k}$ for $1\leq k\leq n$ and $1\leq i\leq k$.
The arrow $d_{i}^{k}$ corresponds to the unique order preserving
onto function $f:[k+1]\to[k]$ such that $f(i)=f(i+1)$. By \propref{IsomorphismOfOrderPreservingAndReverseStrictlyOrderPreserving},
$d_{i}^{k}$ corresponds to some morphism in $(\Delta_{n})^{\op}$.
Following the explicit isomorphism given in \propref{IsomorphismOfOrderPreservingAndReverseStrictlyOrderPreserving},
it is easy to see that $d_{i}^{k}$ corresponds to the inverse of
the unique strict order preserving function from $[k-1]$ to $[k]$
such that $i$ is not in its image. Note that the generators $d_{i}^{k}$
correspond to $(k-1)$-dimensional face maps in the $k$-simplex interpretation.
The presentation for the category $(\Delta_{n})^{\op}$ according
to this set of generators is well known in algebraic topology. 
\begin{lem}[{\cite[Section VII.5 exercise 2a]{MacLane1998} or \cite[end of Section 2.3]{Friedman2012}}]
\label{lem:CategoryPresentationForDelta}The category $\Delta_{n}^{\op}$
and hence $\SEO_{n+1}$ is presented by the generating quiver $Q$
with morphisms $d_{i}^{k}$ as defined above and the relations
\[
d_{i}^{k-1}d_{j}^{k}=d_{j-1}^{k-1}d_{i}^{k}\quad(2\leq k\leq n,\quad1\leq i<j\leq k).
\]
\end{lem}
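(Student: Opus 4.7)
The plan is to exhibit an isomorphism $\pi: Q^*/\theta_R \to \SEO_{n+1}^\bullet$ (the isomorphism with $\Delta_n^{\op}$ then follows from \propref{IsomorphismOfOrderPreservingAndReverseStrictlyOrderPreserving}). First, verify by direct computation that the proposed relations hold in $\SEO_{n+1}^\bullet$: for $1 \leq i < j \leq k$, both $d_i^{k-1} d_j^k$ and $d_{j-1}^{k-1} d_i^k$ yield the unique order-preserving surjection $[k+1] \to [k-1]$ that collapses the disjoint pairs $\{i, i+1\}$ and $\{j, j+1\}$ simultaneously; the asymmetry in the index $j$ versus $j-1$ simply compensates for the relabeling caused by the first collapse in each composition. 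By the universal property of quotients of free categories, this gives a well-defined functor $\pi$ that is the identity on objects. Since by \corref{QuiverOfPO_n} the $d_i^k$ are precisely the irreducible morphisms of $\SEO_{n+1}^\bullet$, and irreducibles generate any locally trivial category together with the identities, $\pi$ is surjective on every hom-set.

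For injectivity, the approach is to introduce a normal form for words in $Q^*$ modulo $\theta_R$. Reading the relation $d_i^{k-1} d_j^k = d_{j-1}^{k-1} d_i^k$ for $i < j$ as a left-to-right rewriting rule, each application moves a smaller index to the outer position while decrementing the inner index. A suitable well-founded order on words (for example, lexicographic order on the sequence of indices) guarantees termination, and a check of the overlapping critical pairs yields confluence, so every equivalence class contains a unique ``sorted'' representative. A direct count of these normal-form words with source $[m+1]$ and target $[r+1]$ (defect $m-r$) gives $\binom{m}{r}$, which by \lemref{CountOrderPreserving} matches $|\SEO_{n+1}^\bullet([m+1], [r+1])|$ exactly. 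This forces $\pi$ to be bijective on hom-sets, and hence an isomorphism.

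The main obstacle is the confluence verification in the rewriting argument, since confluence analyses can be error-prone when multiple relations overlap. A cleaner route that circumvents the rewriting machinery is to construct an explicit section $\sigma: \SEO_{n+1}^\bullet \to Q^*/\theta_R$ directly: for $f : [m+1] \to [r+1]$, record the $m-r$ positions $1 \leq p_1 < \cdots < p_{m-r} \leq m$ where $f$ identifies consecutive elements, and set $\sigma(f)$ to be the equivalence class of the canonical composition $d_{q_{m-r}}^{r} \cdots d_{q_1}^{m-1}$ whose collapse positions are the $p_l$ (with indices suitably shifted after each step). One then verifies that $\sigma$ is functorial and a two-sided inverse of $\pi$, invoking only the given relations to handle the case where the initial factorization chosen for $\sigma(f_2 \circ f_1)$ differs from $\sigma(f_2) \circ \sigma(f_1)$. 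I would pursue this explicit-section approach to keep the argument self-contained and avoid the full termination/confluence bookkeeping that the cited references carry out in the classical simplicial setting.
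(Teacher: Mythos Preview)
The paper does not supply its own proof of this lemma: it is stated with a citation to Mac Lane and to Friedman and used as a black box, so there is no in-paper argument to compare against. Your proposal is essentially the standard proof one finds in those references---verify the simplicial identities directly, get a surjective functor from $Q^\ast/\theta_R$ because the $d_i^k$ generate, and establish injectivity either by a rewriting/normal-form argument or, equivalently, by the counting observation that strictly increasing index sequences $p_1<\cdots<p_{m-r}$ in $[m]$ parametrise both the $\theta_R$-classes and the surjections $[m+1]\twoheadrightarrow[r+1]$ (cf.\ \lemref{CountOrderPreserving}). Either route is sound.

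Two small points. First, your phrase ``collapses the disjoint pairs $\{i,i+1\}$ and $\{j,j+1\}$'' is only literally correct when $j>i+1$; for $j=i+1$ the two pairs overlap and the composite identifies the three consecutive elements $i,i+1,i+2$. The relation still holds in that case, but the verification reads differently. Second, your ``explicit section'' paragraph is really the same argument as the normal-form one: the map $\sigma$ you describe is precisely the assignment of each surjection to its canonical (index-increasing) factorisation, and showing $\sigma$ is a left inverse to $\pi$ on morphisms is exactly the statement that distinct normal forms map to distinct surjections. So the two alternatives you outline are not genuinely different, and neither avoids the essential step of checking that every $\theta_R$-class contains at most one normal form.
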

With \propref{QuiverPresentationLocallyTrivialCat} this immediately
implies:
\begin{cor}
Let $Q$ be the quiver of $\mathbb{\Bbbk}\SEO_{n}$ considered as
a subgraph as given in \corref{QuiverOfPO_n} and denote the arrows
of $Q$ by $d_{i}^{k}$ as above. A quiver presentation of $\mathbb{\Bbbk}\SEO_{n}$
and hence of $\mathbb{\Bbbk}\PO_{n}$ is given by 

\[
d_{i}^{k-1}d_{j}^{k}=d_{j-1}^{k-1}d_{i}^{k}\quad(2\leq k\leq n-1,\quad1\leq i<j\leq k)
\]
\end{cor}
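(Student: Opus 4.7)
The plan is to combine three ingredients already set up in the paper: the general reduction of quiver presentations for locally trivial category algebras to category presentations, the isomorphism with the opposite simplex category, and the classical simplicial identities.

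First I would invoke \propref{QuiverPresentationLocallyTrivialCat}: since $\SEO_n$ is finite, skeletal, and locally trivial, any category presentation $(Q,R)$ of $\SEO_n$ in which $Q$ is the subgraph of irreducible morphisms automatically gives a quiver presentation of $\Bbbk\SEO_n$, which in turn is Morita equivalent to $\Bbbk\EO_n\simeq\Bbbk\PO_n$. By \corref{QuiverOfPO_n}, the quiver $Q$ consists of the vertices $[0],[1],\ldots,[n]$ together with the arrows $d_i^k$ from $[k+1]$ to $[k]$ for $1\le k\le n-1$, $1\le i\le k$, so it suffices to exhibit a category presentation of $\SEO_n$ using exactly these generators.

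Next, the isolated vertex $[0]=\varnothing$ contributes no non-identity morphisms, so a presentation of $\SEO_n^{\bullet}$ yields a presentation of $\SEO_n$ (with the same relations). Now apply \propref{IsomorphismOfOrderPreservingAndReverseStrictlyOrderPreserving} with $n$ replaced by $n-1$: this gives a concrete isomorphism $\SEO_n^{\bullet}\cong\Delta_{n-1}^{\op}$. The text immediately preceding the corollary already identifies, through the explicit isomorphism $\mathcal{G}$, the generator $d_i^k$ with the (opposite of the) unique strict order-preserving function $[k-1]\to[k]$ whose image omits $i$. These are exactly the standard coface maps, so in $\Delta_{n-1}^{\op}$ our generators $d_i^k$ coincide with the classical simplicial face generators (for $1\le k\le n-1$).

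At this point, \lemref{CategoryPresentationForDelta}, applied to $\Delta_{n-1}^{\op}$, furnishes the category presentation
\[
d_i^{k-1}d_j^{k}=d_{j-1}^{k-1}d_i^{k}\qquad(2\le k\le n-1,\ 1\le i<j\le k),
\]
the bound $k\le n-1$ coming from the index shift $\SEO_n^{\bullet}\cong\Delta_{n-1}^{\op}$. Transporting this presentation back along the isomorphism $\mathcal{G}$ gives the desired category presentation of $\SEO_n^{\bullet}$, hence of $\SEO_n$, and \propref{QuiverPresentationLocallyTrivialCat} converts it into the claimed quiver presentation of $\Bbbk\SEO_n\simeq\Bbbk\PO_n$. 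The main (and only) delicate step is the bookkeeping: keeping straight that the generators named $d_i^k$ on the $\SEO$ side really match the face maps with the same indices on the simplex side, and that the index range shifts from $2\le k\le n$ in the lemma to $2\le k\le n-1$ in the corollary. Everything else is direct application of results already in place.
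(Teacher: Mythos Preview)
Your proposal is correct and follows exactly the paper's approach: the paper simply states that the corollary follows immediately from \lemref{CategoryPresentationForDelta} (which already records the presentation for $\SEO_{n+1}$ via the isomorphism with $\Delta_n^{\op}$) together with \propref{QuiverPresentationLocallyTrivialCat}, and your write-up just makes the implicit index shift $n+1\mapsto n$ and the transport along the isomorphism explicit.
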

\begin{rem}
Since the range and domain objects can usually be understood from
the context, the convention in the literature is to omit the superscripts.
The above relations are then written as: 
\[
d_{i}d_{j}=d_{j-1}d_{i}\quad(i<j)
\]
\end{rem}

\subsection{\label{subsec:The_Partial_Catalan}The partial Catalan monoid}

In this section we will study the representation theory of the partial
Catalan monoid $\PC_{n}$ using the category $\EC_{n}$. Recall that
the functions in $\PC_{n}$ are both order-preserving and order-decreasing.
For every set $A\subseteq[n]$, it is clear that the identity function
$1_{A}$ is the only order-decreasing function with domain and image
being $A$ so $\EC_{n}$ is indeed a locally trivial category. Moreover,
if $A,B\subseteq[n]$ for $A\neq B$ then at least one of the hom-sets
$\EC_{n}(A,B)$ or $\EC_{n}(B,A)$ is empty so the objects $A$ and
$B$ are not isomorphic hence $\EC_{n}$ is skeletal. We remark that
there is also a skeletal locally trivial category (in fact, a poset)
whose algebra is isomorphic to the algebra of the Catalan monoid $\C_{n}$,
i.e., the monoid of all \emph{total }order-preserving and order-decreasing
functions (see \cite{Margolis2018}). 

\paragraph{Blocks}

For every non-empty $A\subseteq[n]$ there exists a morphism in $\EC_{n}$
with domain $A$ and image $\{1\}$. Therefore, it is clear that the
category $\EC_{n}$ has two connected components (with $\varnothing$
being an isolated vertex). Since $\PC_{n}$ is a monoid with zero
we can use the same argument as in \lemref{DecompositionOfCPTnToConnectedComponents}
to obtain the following.
\begin{lem}
The decomposition of $\mathbb{\Bbbk}\PC_{n}$ into a direct product
of connected algebras is

\[
\Bbbk\PC_{n}\simeq\mathbb{\Bbbk}\times\Bbbk_{0}\PC_{n}.
\]
\end{lem}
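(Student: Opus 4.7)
The plan is to imitate the proof of \lemref{DecompositionOfCPTnToConnectedComponents}. Since $\PC_n$ is a monoid with zero (the empty partial function), the general decomposition $\mathbb{\Bbbk}M \simeq \mathbb{\Bbbk}\{0\}\times\mathbb{\Bbbk}_0 M \simeq \mathbb{\Bbbk}\times\mathbb{\Bbbk}_0\PC_n$ automatically holds. What requires proof is that this is precisely the decomposition into \emph{connected} blocks, i.e., that $\mathbb{\Bbbk}_0\PC_n$ itself is connected. Equivalently, via the isomorphism $\mathbb{\Bbbk}\PC_n\simeq\mathbb{\Bbbk}\EC_n$ and \lemref{ConnectedComponentsOfLocallyTrivialCatAlg}, it suffices to show that $\EC_n$ has exactly two connected components.

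First I would argue that $\varnothing$ is an isolated vertex of $\EC_n$: the only morphism incident to $\varnothing$ in the category $\E_n$ is the identity (an onto function with domain or range equal to $\varnothing$ forces the other to be $\varnothing$), and this morphism is of course both order-preserving and order-decreasing. Second, I would show that all nonempty subsets of $[n]$ lie in a single connected component of $\EC_n$. This is where the text already indicates the argument: for any nonempty $A\subseteq[n]$, the constant function $c_A\colon A\to\{1\}$ sending every $x\in A$ to $1$ is order-preserving (trivially) and order-decreasing (since $1\leq x$ for every $x\in A$), and it is onto $\{1\}$, so $c_A\in\EC_n(A,\{1\})$. Thus every nonempty object is connected to $\{1\}$ in the underlying graph of $\EC_n$, establishing the claim.

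Combining these two observations, $\EC_n$ has exactly two connected components, one consisting of $\varnothing$ alone and one consisting of all nonempty subsets. By \lemref{ConnectedComponentsOfLocallyTrivialCatAlg}, $\mathbb{\Bbbk}\EC_n$ (and hence $\mathbb{\Bbbk}\PC_n$) has exactly two blocks. The block corresponding to $\varnothing$ is $1$-dimensional (spanned by the identity at $\varnothing$), giving the $\mathbb{\Bbbk}$ factor, which must coincide with $\mathbb{\Bbbk}\{0\}$ under the isomorphism $\mathbb{\Bbbk}\PC_n\simeq\mathbb{\Bbbk}\EC_n$. The complementary block is therefore forced to be $\mathbb{\Bbbk}_0\PC_n$, and this factor is connected because it corresponds to a single connected component of $\EC_n$.

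There is no real obstacle here; the only point requiring a moment of thought is verifying that the constant map to $\{1\}$ lies in $\EC_n$, which is immediate from the definition. The proof is essentially identical in structure to that of the $\PO_n$ case.
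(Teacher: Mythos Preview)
Your proposal is correct and follows essentially the same argument as the paper: the text preceding the lemma notes that every nonempty $A\subseteq[n]$ admits a morphism in $\EC_n$ with image $\{1\}$, so $\EC_n$ has exactly two connected components, and then invokes the monoid-with-zero decomposition together with \lemref{ConnectedComponentsOfLocallyTrivialCatAlg} exactly as you do. Your write-up is slightly more explicit about why the constant map to $\{1\}$ lies in $\EC_n$ and why $\varnothing$ is isolated, but the approach is identical.
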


\paragraph{Quiver presentation}

Since $\EC_{n}$ is a skeletal locally trivial category, its quiver
is the subgraph of all irreducible morphisms. So we just need to identify
the irreducible morphisms in order to find the quiver. This was done
in \cite{Stein2016} with the following result.
\begin{lem}[{\cite[Lemma 5.7]{Stein2016}}]
A morphism $f\in\EC_{n}(A,B)$ is irreducible if and only if there
exists $j\in A$ such that $f(i)=i$ for any $i\in A\backslash\{j\}$
and $f(j)=j-1$. 
\end{lem}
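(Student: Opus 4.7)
The plan is to characterize irreducibility via a numerical invariant. For any morphism $f \in \EC_n(A,B)$, set $w(f) = \sum_{a \in A}(a - f(a))$; this is a nonnegative integer because $f$ is order-decreasing, and it vanishes precisely when $f$ is an identity morphism. A direct computation gives superadditivity under composition:
\[
w(g \circ h) = w(h) + \sum_{c \in C}\lvert h^{-1}(c)\rvert\,(c-g(c)) \;\geq\; w(h) + w(g),
\]
since $\lvert h^{-1}(c)\rvert \geq 1$ on $C = h(A)$. Consequently, any factorization $f = g \circ h$ of a morphism with $w(f) = 1$ forces one of $w(g), w(h)$ to be $0$, i.e., one of the factors is an identity. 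Since each morphism of the shape described in the statement plainly has $w(f) = 1$, this settles the ``if'' direction.

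For the ``only if'' direction, I will show that if $f$ is not an identity and $w(f) \geq 2$, then $f$ admits a nontrivial factorization. Let $a^*$ be the smallest element of $A$ with $f(a^*) < a^*$, which exists because $f$ is not an identity. Minimality gives $f(i) = i$ for $i \in A$ with $i < a^*$, and when $a^*-1 \in A$, order-preservation combined with $f(a^*-1) = a^*-1$ forces $f(a^*) = a^*-1$. Define $h : A \to C$ by $h(a^*) = a^*-1$ and $h(a) = a$ otherwise, with $C = h(A)$; define $g : C \to B$ by $g(c) = f(c)$ on $C \cap A$, together with $g(a^*-1) = f(a^*)$ if $a^*-1 \notin A$. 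When $a^*-1 \in A$, the two potential definitions of $g(a^*-1)$ coincide precisely because $f(a^*-1) = f(a^*) = a^*-1$.

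A routine check then confirms that $h$ and $g$ are order-preserving, order-decreasing, onto morphisms in $\EC_n$ (the order-preservation of $g$ uses the minimality of $a^*$ to compare values below $a^*$ with $f(a^*)$), that $g \circ h = f$, and that $w(h) = 1$ while $w(g) = w(f) - 1 \geq 1$. Thus neither factor is an identity, so $f$ is reducible. Combining with the first paragraph, $f$ is irreducible if and only if $w(f) = 1$; a brief analysis of the morphisms satisfying $w(f) = 1$ identifies the unique $j \in A$ with $f(j) < j$, forces $f(j) = j-1$ and $f(i) = i$ for $i \neq j$ (with $j \geq 2$ automatically), and recovers the form described in the statement.

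The main obstacle is the sub-case bookkeeping in constructing $h$ and $g$: one must handle $a^*-1 \in A$ versus $a^*-1 \notin A$ separately, and in the former case exploit order-preservation of $f$ at exactly the right place to make the definition of $g$ consistent across the two preimages of $a^*-1$ under $h$, and to rule out a priori the possibility that $f(a^*) < a^*-1$.
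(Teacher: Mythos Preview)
The paper does not prove this lemma; it is quoted from \cite[Lemma 5.7]{Stein2016} without argument, so there is no in-paper proof to compare against. Your proof is correct and self-contained. The weight $w(f)=\sum_{a\in A}(a-f(a))$ is well suited to $\EC_n$: it is nonnegative because $f$ is order-decreasing, additive-superadditive under composition exactly as you compute, and vanishes precisely on identities (which coincide with isomorphisms since $\EC_n$ is skeletal and locally trivial). The factorization step is handled carefully: the two sub-cases $a^\ast-1\in A$ and $a^\ast-1\notin A$ are the right case split, and your use of order-preservation of $f$ at $a^\ast-1$ (in the first sub-case) and of minimality of $a^\ast$ (to get $g(c_1)=c_1\le f(a^\ast)$ for $c_1<a^\ast-1$ in the second) are exactly the checks needed to make $g$ order-preserving. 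The computation $w(g)=w(f)-1$ holds in both sub-cases, so neither factor is an identity when $w(f)\ge 2$.

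One small remark: your invariant $w$ is close in spirit to the function $S(A)=\sum_{a\in A}a$ that the paper uses later (in the proof of \lemref{LoweyLengthLemma}) to bound the Loewy length of $\Bbbk\EC_n$, though $w$ is finer since it tracks the map rather than just the domain and range. Your argument could in fact be recycled to give an alternative proof of that Loewy length bound.
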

\begin{cor}
The vertices in the quiver of $\mathbb{\Bbbk}\PC_{n}$ and $\Bbbk\EC_{n}$
are in one-to-one correspondence with subsets of $[n]$. For $A,B\subseteq[n]$,
the arrows from $A$ to $B$ are in one-to-one correspondence with
onto functions $f:A\to B$ for which there exists $j\in A$ such that
$f(i)=i$ for $i\in A\backslash\{j\}$ and $f(j)=j-1$. 
\end{cor}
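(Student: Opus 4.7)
The plan is to assemble the corollary directly from three facts already in place: the isomorphism $\Bbbk\PC_{n}\simeq\Bbbk\EC_{n}$ from \cite[Section 5]{Stein2016}, the fact that $\EC_{n}$ is skeletal and locally trivial (verified in the paragraph just before the blocks discussion), and \lemref{QuiverOfCatTrivialMonoids}, which identifies the quiver of a skeletal locally trivial category with the subgraph of irreducible morphisms.

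First I would observe that since $\Bbbk\PC_{n}\simeq\Bbbk\EC_{n}$ as algebras, their quivers coincide (the quiver is determined by the algebra up to isomorphism), so it suffices to describe the quiver of $\Bbbk\EC_{n}$. Next, because $\EC_{n}$ is skeletal and locally trivial, \corref{AlgebrasOfLocallyTrivialCatAreBasic} applies, so $\Bbbk\EC_{n}$ is split basic, and \lemref{PrimitiveIdempotentsCatTrivialMonoids} gives a complete set of primitive orthogonal idempotents indexed by the objects of $\EC_{n}$, namely the subsets of $[n]$. This establishes the vertex set of the quiver.

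For the arrows, I would invoke \lemref{QuiverOfCatTrivialMonoids}: the quiver arrows are precisely the irreducible morphisms of $\EC_{n}$. The immediately preceding lemma (Lemma 5.7 of \cite{Stein2016}) characterizes the irreducible morphisms in $\EC_{n}(A,B)$ as exactly those $f:A\to B$ for which there exists $j\in A$ with $f(i)=i$ for all $i\in A\setminus\{j\}$ and $f(j)=j-1$. Matching this characterization against the hom-sets $\EC_{n}(A,B)$ gives the claimed description of the arrows from $A$ to $B$.

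Since every ingredient is already available, there is no genuine obstacle here; the statement is essentially a direct corollary of the preceding lemma combined with \lemref{QuiverOfCatTrivialMonoids}. The only small point to mention explicitly is that any such irreducible $f$ is automatically onto (its image is $B$ by definition of the hom-set $\EC_{n}(A,B)$), which is why the corollary phrases the arrows as onto functions.
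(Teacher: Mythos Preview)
Your proposal is correct and matches the paper's approach: the corollary is stated without proof precisely because it follows immediately from the preceding lemma (characterizing irreducible morphisms of $\EC_{n}$) together with \lemref{QuiverOfCatTrivialMonoids} and the isomorphism $\Bbbk\PC_{n}\simeq\Bbbk\EC_{n}$. Your write-up simply makes explicit the logical assembly that the paper leaves to the reader.
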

In this subsection we denote by $Q$ the quiver of $\Bbbk\EC_{n}$.
We now want to describe the quiver presentation of $\Bbbk\PC_{n}$
using \propref{QuiverPresentationLocallyTrivialCat}. We clearly need
some way to index the morphisms of $Q$. We denote by $d_{i}^{A}$
the irreducible morphism whose domain is $A$ and $i\in A$ is its
unique element such that $d_{i}^{A}(i)=i-1$. Note that the range
of $d_{i}^{A}$ is $(A\cup\{i-1\})\backslash\{i\}$. For simplicity
we denote this set by $A_{i}$. 
\begin{lem}
\label{lem:RelationsHoldInEC_n}For every set $A\subseteq[n]$ the
relations
\begin{enumerate}[label=(PC\arabic*)]
\item \label{enu:PC1}$d_{i}^{A_{j}}d_{j}^{A}=d_{j}^{A_{i}}d_{i}^{A}\quad(j>i+1,\quad i,j\in A)$
\item \label{enu:PC2}$d_{i}^{A_{i+1}}d_{i+1}^{A}=d_{i}^{(A_{i})_{i+1}}d_{i+1}^{A_{i}}d_{i}^{A}\quad(i,i+1\in A)$
\end{enumerate}
hold in $\EC_{n}$.
\end{lem}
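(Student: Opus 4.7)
The plan is to verify each relation by direct computation, treating both sides as morphisms in $\EC_n$, i.e.\ as onto order-decreasing functions equipped with specified domain and range. Two such morphisms are equal if and only if they have the same domain, the same range, and agree pointwise. Recall that the generator $d_k^S$ has domain $S$, sends $k\mapsto k-1$, fixes every other element of $S$, and has range $S_k=(S\cup\{k-1\})\setminus\{k\}$.

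For relation \ref{enu:PC1}: the hypothesis $i,j\in A$ with $j>i+1$ ensures both compositions are defined. Indeed, $i\in A\setminus\{j\}\subseteq A_j$, so $d_i^{A_j}$ is applicable after $d_j^A$; symmetrically $j\in A_i$, so $d_j^{A_i}$ is applicable after $d_i^A$. Because $j>i+1$, the four numbers $i-1,i,j-1,j$ are pairwise distinct, so the order in which we ``remove $i$ and add $i-1$'' versus ``remove $j$ and add $j-1$'' does not matter; both composites have range $(A\cup\{i-1,j-1\})\setminus\{i,j\}$ and act as the identity on $A\setminus\{i,j\}$ while sending $i\mapsto i-1$ and $j\mapsto j-1$. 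Equality of the two composites follows.

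For relation \ref{enu:PC2}: I again check well-definedness step by step. Since $i\in A$ and $i\neq i+1$, we have $i\in A_{i+1}$, so $d_i^{A_{i+1}}d_{i+1}^A$ is defined. On the other side, $i+1\in A\setminus\{i\}\subseteq A_i$, and after applying $d_{i+1}^{A_i}$ the value $i$ re-enters the range (as the image of $i+1$), so $i\in (A_i)_{i+1}$ and $d_i^{(A_i)_{i+1}}$ is applicable. Tracking elements: on the left-hand side, $i+1\mapsto i\mapsto i-1$ and $i\mapsto i\mapsto i-1$; on the right-hand side, $i\mapsto i-1\mapsto i-1\mapsto i-1$ (fixed by the next two maps since $i-1\neq i+1,i$) and $i+1\mapsto i+1\mapsto i\mapsto i-1$. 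Elements of $A\setminus\{i,i+1\}$ are fixed throughout on both sides. A short set computation shows both composites have the same range $(A\cup\{i-1\})\setminus\{i,i+1\}$. Hence the two composites coincide.

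The only real obstacle is the set-bookkeeping in \ref{enu:PC2}, where adding $i-1$ early and deleting $i$ late interact non-trivially with the insertion of $i$ coming from $d_{i+1}^{A_i}$; but as sketched above, a careful element-by-element trace resolves this mechanically.
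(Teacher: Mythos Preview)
Your proof is correct and follows the same approach as the paper: a direct pointwise verification of both sides on each element of $A$. You are somewhat more thorough than the paper in explicitly checking that each composite is well-defined (i.e.\ that the required element lies in the domain at each stage) and that the ranges agree, but the underlying method is identical.
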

\begin{rem}
In order to simplify notation, we will drop the superscripts and remain
with the ``braid like'' relations
\begin{enumerate}[label=(PC\arabic*)]
\item $d_{i}d_{j}=d_{j}d_{i}\quad(j>i+1,\quad i,j\in A)$
\item $d_{i}d_{i+1}=d_{i}d_{i+1}d_{i}\quad(i,i+1\in A)$
\end{enumerate}
where the domain of every morphism should be understood from the context.
\end{rem}
\begin{proof}[Proof of \lemref{RelationsHoldInEC_n}]
This is a straightforward verification for every $k\in A$. For \ref{enu:PC1}
we note that 
\[
d_{i}d_{j}(k)=d_{j}d_{i}(k)=\begin{cases}
k & k\neq i,j\\
i-1 & k=i\\
j-1 & k=j.
\end{cases}
\]
For \ref{enu:PC2} we have that
\[
d_{i}d_{i+1}(k)=d_{i}d_{i+1}d_{i}(k)=\begin{cases}
k & k\neq i,i+1\\
i-1 & k=i,i+1.
\end{cases}
\]
\end{proof}
In this subsection we will denote the category relation defined in
\lemref{RelationsHoldInEC_n} by $R$. We will show that $(Q,R)$
is a quiver presentation for $\Bbbk\PC_{n}$. Let $\theta_{R}$ is
the category congruence generated by $R$.
\begin{lem}
\label{lem:InductiveStepEC_n}Let $f:A\to B$ be a non-identity morphism
of $\EC_{n}$ and let 
\[
f=g_{1}\cdots g_{r}
\]
 be some decomposition of $f$ into irreducible morphisms. Denote
by $i\in A$ the minimal element $x\in A$ such that $f(x)<x$. Then
$g_{1}\cdots g_{r}$ is $\theta_{R}$ equivalent to 
\[
g_{1}^{\prime}\cdots g_{r^{\prime}}^{\prime}d_{i}
\]
for some irreducible morphisms $g_{1}^{\prime},\ldots,g_{r^{\prime}}^{\prime}$.
\end{lem}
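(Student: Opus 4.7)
The plan is to locate where $i$ is first decreased in the given factorisation and then migrate that factor rightward (i.e., earlier in application order) one swap at a time, using only the relations \ref{enu:PC1} and \ref{enu:PC2}.

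Set $D_0=A$ and, for $1\leq k\leq r$, let $D_k=\im(g_r\circ\cdots\circ g_{r-k+1})$, so that the $k$th factor to be applied is $g_{r-k+1}=d_{b_k}^{D_{k-1}}$ for a unique $b_k\in D_{k-1}$. Let $t$ be the smallest index for which $(g_r\circ\cdots\circ g_{r-t+1})(i)<i$; then necessarily $b_t=i$. The key combinatorial observation, which I would prove by an easy induction on $k$, is that $D_{k-1}\cap\{1,\ldots,i\}=A\cap\{1,\ldots,i\}$ for every $k\leq t$, and consequently $b_k>i$ for every $k<t$. The induction uses three facts: every $j\in A$ with $j<i$ satisfies $f(j)=j$ by the minimality of $i$ and is therefore fixed at every intermediate stage; $b_{k'}\neq i$ for $k'<t$ by the choice of $t$; and whenever $b_{k'}>i$ we have $b_{k'}-1\geq i$, so passing from $D_{k'-1}$ to $D_{k'}$ never introduces a new element of $\{1,\ldots,i-1\}$.

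I then argue by induction on $t$. The case $t=1$ is immediate, since $g_r$ is already $d_i^{A}$ and one takes $(g'_1,\ldots,g'_{r'})=(g_1,\ldots,g_{r-1})$. For $t\geq 2$, focus on the two consecutive factors $g_{r-t+1}=d_i^{D_{t-1}}$ and $g_{r-t+2}=d_{b_{t-1}}^{D_{t-2}}$, with $b_{t-1}\geq i+1$ by the observation above. If $b_{t-1}\geq i+2$, relation \ref{enu:PC1} swaps them without changing the length, and the new factorisation has $d_i^{D_{t-2}}$ at application-position $t-1$. If $b_{t-1}=i+1$, then $i,i+1\in D_{t-2}$ by the observation, so relation \ref{enu:PC2} rewrites the pair as $d_i^{((D_{t-2})_i)_{i+1}}\,d_{i+1}^{(D_{t-2})_i}\,d_i^{D_{t-2}}$, again placing a factor $d_i^{D_{t-2}}$ at application-position $t-1$. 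In either case the first $t-2$ applied factors are untouched, the new decomposition is $\theta_R$-equivalent to the original, and the new ``first position where $i$ is decreased'' is $t-1<t$, so the induction hypothesis completes the proof.

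The main technical obstacle is the domain bookkeeping in the PC2 case: one must confirm that both $i$ and $i+1$ belong to the relevant domain $D_{t-2}$, and that the codomain of the length-three replacement matches the original codomain $(D_{t-1})_i$, so that the subsequent factors $g_{r-t},\ldots,g_1$ extend the new decomposition without change. Both points reduce to direct set-theoretic verifications using the definition $A_j=(A\cup\{j-1\})\setminus\{j\}$, but they are what guarantee that only \ref{enu:PC1} and \ref{enu:PC2} are ever needed and that the inductive step is legitimate.
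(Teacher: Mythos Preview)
Your argument is correct. The combinatorial observation that $D_{k-1}\cap\{1,\ldots,i\}=A\cap\{1,\ldots,i\}$ for $k\leq t$ (hence $b_k>i$ for $k<t$) is exactly what is needed to justify each swap, and your domain/codomain bookkeeping in the \ref{enu:PC2} case checks out: since $b_{t-1}=i+1\in D_{t-2}$ and $i\in D_{t-2}$ by the observation, the relation applies, and $(((D_{t-2})_i)_{i+1})_i=((D_{t-2})_{i+1})_i$ so the tail $g_{r-t},\ldots,g_1$ composes unchanged.

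The paper organises the same moves differently. Rather than inducting on the position $t$ at which $i$ is first decreased, it inducts on the domain $A$ with respect to the partial order $\leq_{\EC_n}$: one strips off the rightmost factor $g_r=d_j^A$ (with $j>i$), applies the induction hypothesis to $g_1\cdots g_{r-1}$ (whose domain $A_j$ is strictly above $A$) to pull a $d_i$ to the end of \emph{that} product, and only then performs a single \ref{enu:PC1} or \ref{enu:PC2} swap on $d_i d_j$. Unrolled, the paper's recursion produces the same sequence of swaps as yours but discovered from the outside in, whereas you locate the relevant $d_i$ directly and bubble it rightward. Your approach is more self-contained (it does not rely on the ambient partial order on objects) and makes the termination measure transparent; the paper's choice of induction variable has the advantage of matching the induction used in the subsequent proposition, so the two lemmas share a common scaffold.
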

\begin{proof}
We prove this by induction on the domain of $f$ according to the
partial order $\leq_{\EC_{n}}$ defined on the objects of $\EC_{n}$
(see \defref{EICatAreDirected}). If $f$ is irreducible then there
is nothing to prove. Now, consider a morphism $f:A\to B$ and assume
we have already proved the claim for every morphism with domain $X$
for $A<_{\EC_{n}}X$. If $g_{r}=d_{i}$ then there is nothing to prove.
Otherwise $g_{r}=d_{j}$ for some $i<j$. Define 
\[
h=g_{1}\cdots g_{r-1}
\]
Clearly the domain of $h$ is $A_{j}=(A\cup\{j-1\})\backslash\{j\}$
and $A<A_{j}$. Note that $i\in A_{j}$ and it must be the minimal
element $x\in A_{j}$ such that $h(x)<x$ so by the induction assumption
this decomposition is $\theta_{R}$ equivalent to
\[
g_{1}^{\prime}\cdots g_{l}^{\prime}d_{i}
\]
and therefore $g_{1}\cdots g_{r}$ is $\theta_{R}$ equivalent to
\[
g_{1}^{\prime}\cdots g_{l}^{\prime}d_{i}d_{j}.
\]

If $j>i+1$ then \ref{enu:PC1} implies that we can swap the two rightmost
morphisms and obtain
\[
g_{1}^{\prime}\cdots g_{l}^{\prime}d_{i}d_{j}=g_{1}^{\prime}\cdots g_{l}^{\prime}d_{j}d_{i}.
\]
If $j=i+1$ we can use \ref{enu:PC2} and get
\[
g_{1}^{\prime}\cdots g_{l}^{\prime}d_{i}d_{i+1}=g_{1}^{\prime}\cdots g_{l}^{\prime}d_{i}d_{i+1}d_{i}.
\]
In any case we get a new decomposition (which might be of different
length) 
\[
g_{1}^{\prime}\cdots g_{r^{\prime}}^{\prime}d_{i}
\]
as required.
\end{proof}
\begin{prop}
The tuple $(Q,R)$ is a category presentation for $\EC_{n}$.
\end{prop}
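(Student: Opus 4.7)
The plan is to show that the congruence $\theta_R$ on $Q^\ast$ coincides with the kernel congruence $\theta$ of the projection $\pi\colon Q^\ast\to\EC_n$. The inclusion $\theta_R\subseteq\theta$ is immediate from \lemref{RelationsHoldInEC_n}, since the relations in $R$ were verified to hold in $\EC_n$. The substantive task is the reverse inclusion: any two paths in $Q^\ast$ whose compositions in $\EC_n$ agree must already be $\theta_R$-equivalent.

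I will prove this by well-founded induction on the common domain object $A$ with respect to the partial order $\leq_{\EC_n}$, moving from maximal objects downward. At each $A$, the inductive claim is that for every morphism $f\colon A\to B$ of $\EC_n$, any two paths in $Q^\ast$ from $A$ to $B$ projecting to $f$ are $\theta_R$-equivalent. For maximal $A$ the only morphism from $A$ is $1_A$: every irreducible $d_i^A$ satisfies $A<_{\EC_n}A_i$ strictly (since $i\in A\setminus A_i$), so no non-empty path in $Q^\ast$ originates at $A$, and the empty path is the unique representative of $1_A$.

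For the inductive step, fix $A$ and assume the claim for all domains strictly larger than $A$ in $\leq_{\EC_n}$. If $f=1_A$ there is again nothing to prove, since $\leq_{\EC_n}$ is antisymmetric and each irreducible strictly raises the object, so no non-trivial closed path in $Q^\ast$ exists at $A$. Otherwise $f\colon A\to B$ is non-identity; given two decompositions $g_1\cdots g_r$ and $h_1\cdots h_s$ of $f$, let $i=\min\{x\in A : f(x)<x\}$. By \lemref{InductiveStepEC_n}, both decompositions are $\theta_R$-equivalent to ones ending in $d_i^A$:
\[
g_1\cdots g_r \;\equiv\; p_1\cdots p_k\, d_i^A, \qquad h_1\cdots h_s \;\equiv\; q_1\cdots q_l\, d_i^A.
\]
Because $d_i^A$ is an onto function, it is an epimorphism in $\EC_n$, so both $p_1\cdots p_k$ and $q_1\cdots q_l$ compose to the unique morphism $f'\colon A_i\to B$ satisfying $f'\circ d_i^A=f$. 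Since $A<_{\EC_n}A_i$, the inductive hypothesis yields $p_1\cdots p_k\equiv q_1\cdots q_l$ modulo $\theta_R$; concatenating on the right with $d_i^A$ preserves the congruence, giving $g_1\cdots g_r\equiv h_1\cdots h_s$, as desired.

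The heart of the argument---producing a common rightmost letter $d_i^A$ in two arbitrary decompositions of $f$---is already handled by \lemref{InductiveStepEC_n}, so what remains is only a routine induction. The main subtlety to guard against is making sure each invocation of the inductive hypothesis is legitimate, which reduces to observing that $A_i$ lies strictly above $A$ in the finite partial order $\leq_{\EC_n}$, so the induction is well-founded.
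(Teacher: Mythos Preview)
Your proof is correct and follows essentially the same approach as the paper: induction on the domain object in the partial order $\leq_{\EC_n}$, invoking \lemref{InductiveStepEC_n} to produce a common rightmost letter $d_i^A$, then applying the inductive hypothesis to the remaining prefixes. The only cosmetic difference is that where the paper verifies by explicit case-checking on $k\in A_i$ that the two prefixes define the same function, you instead observe that $d_i^A$ is an epimorphism (being surjective) and cancel it---a slightly cleaner formulation of the same step.
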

\begin{proof}
In view of \lemref{RelationsHoldInEC_n}, it is left to show that
these relations are enough. In other words, if $f$ is a morphism
of $\EC_{n}$ with two different decompositions into irreducible morphisms
\begin{align*}
f & =g_{1}\cdots g_{r}\\
f & =h_{1}\cdots h_{l}
\end{align*}
we need to prove that these decompositions are $\theta_{R}$ equivalent.
We will prove this by induction on the domain of $f$ according to
the partial order $\leq_{\EC_{n}}$. If $f$ is irreducible there
is nothing to prove. Now, consider a morphism $f:A\to B$ and assume
we have already proved the claim for every morphism with domain $X$
for $A<_{\EC_{n}}X$. Take $i$ to be the minimal element $x\in A$
such that $f(x)<x$ (such an element exists if $f$ is not an isomorphism).
By \lemref{InductiveStepEC_n} we know that $g_{1}\cdots g_{r}$ and
$h_{1}\cdots h_{l}$ are $\theta_{R}$ equivalent to $g_{1}^{\prime}\cdots g_{r^{\prime}}^{\prime}d_{i}$
and $h_{1}^{\prime}\cdots h_{l^{\prime}}^{\prime}d_{i}$ respectively.
Now, it is clear that the domain of both $g_{1}^{\prime}\cdots g_{r\prime}^{\prime}$
and $h_{1}^{\prime}\cdots h_{l^{\prime}}^{\prime}$ is $A_{i}=(A\cup\{i-1\})\backslash\{i\}$.
For every $k\in A_{i}$, if $k\neq i-1$ then 
\[
g_{1}^{\prime}\cdots g_{r^{\prime}}^{\prime}(k)=g_{1}^{\prime}\cdots g_{r^{\prime}}^{\prime}d_{i}(k)=f(k)=h_{1}^{\prime}\cdots h_{l^{\prime}}^{\prime}d_{i}(k)=h_{1}^{\prime}\cdots h_{l^{\prime}}^{\prime}(k)
\]
and if $k=i-1$ then 
\[
g_{1}^{\prime}\cdots g_{r^{\prime}}^{\prime}(i-1)=g_{1}^{\prime}\cdots g_{r\prime}^{\prime}d_{i}(i)=f(i)=h_{1}^{\prime}\cdots h_{l^{\prime}}^{\prime}d_{i}(i)=h_{1}^{\prime}\cdots h_{l^{\prime}}^{\prime}(i-1).
\]
Therefore, $g_{1}^{\prime}\cdots g_{r^{\prime}}^{\prime}$ and $h_{1}^{\prime}\cdots h_{l\prime}^{\prime}$
present the same function. Note that $\mbox{\ensuremath{A<_{\EC_{n}}A_{i}}}$
so by the inductive assumption, they are $\theta_{R}$ equivalent.
Hence $g_{1}^{\prime}\cdots g_{r^{\prime}}^{\prime}d_{i}$ and $h_{1}^{\prime}\cdots h_{l^{\prime}}^{\prime}d_{i}$
are also $\theta_{R}$ equivalent and this finishes the proof.
\end{proof}
In conclusion, we have the following.
\begin{thm}
Let $Q$ be the quiver of $\Bbbk\PC_{n}\simeq\mathbb{\Bbbk}\EC_{n}$
and denote the arrows of $Q$ by $d_{i}^{k}$ as above. A quiver presentation
of these algebras is given by the relations
\begin{align*}
d_{i}^{A_{j}}d_{j}^{A} & =d_{j}^{A_{i}}d_{i}^{A}\quad(j>i+1,\quad i,j\in A)\\
d_{i}^{A_{i+1}}d_{i+1}^{A} & =d_{i}^{(A_{i})_{i+1}}d_{i+1}^{A_{i}}d_{i}^{A}\quad(i,i+1\in A)
\end{align*}

for every $A\subseteq[n]$.
\end{thm}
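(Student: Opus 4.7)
My plan is to derive the quiver presentation by assembling three facts from earlier in the paper. First, the isomorphism $\Bbbk\PC_{n}\simeq\Bbbk\EC_{n}$ recorded at the start of \secref{RepresentationTheoryOfMonoidsOfPartialFunctions} reduces the problem to giving a quiver presentation of $\Bbbk\EC_{n}$. Because $\EC_{n}$ is skeletal and locally trivial, \propref{QuiverPresentationLocallyTrivialCat} further reduces this to producing a \emph{category} presentation of $\EC_{n}$ whose generating graph is the quiver $Q$ of irreducibles. The arrows of $Q$ have already been identified as exactly the morphisms $d_{i}^{A}$, so the remaining task is to pin down the defining relations.

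I would then take the two families (PC1) and (PC2) as the candidate set $R$ of relations. The first step is to verify that they hold in $\EC_{n}$, which is a direct pointwise evaluation on each $k\in A$ and is already covered by \lemref{RelationsHoldInEC_n}. The substantive step is to show these relations suffice: any two factorizations of a morphism $f\colon A\to B$ into irreducibles must be $\theta_{R}$-equivalent. I would prove this by induction on the domain $A$ along the partial order $\leq_{\EC_{n}}$ of \defref{EICatAreDirected}; the base case is when $A$ has no predecessor, where $f$ is either the identity or a single irreducible and there is nothing to check.

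For the inductive step I would rely on the normal-form lemma \lemref{InductiveStepEC_n}: given any decomposition $f=g_{1}\cdots g_{r}$, letting $i$ be the minimal element of $A$ with $f(i)<i$, the decomposition can be rewritten modulo $\theta_{R}$ so that its rightmost factor is $d_{i}^{A}$. Applying this to two decompositions $g_{1}\cdots g_{r}$ and $h_{1}\cdots h_{l}$ of the same $f$, they become $g'_{1}\cdots g'_{r'}d_{i}^{A}$ and $h'_{1}\cdots h'_{l'}d_{i}^{A}$; the two prefixes share the strictly larger domain $A_{i}$ and define the same morphism, so the main induction hypothesis identifies them modulo $\theta_{R}$, closing the argument.

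The anticipated main obstacle is establishing the normal-form lemma itself: one must check that whenever the rightmost factor is $d_{j}$ with $j>i$, either (PC1) (when $j>i+1$) or (PC2) (when $j=i+1$) is available to absorb a trailing $d_{i}$, and that no further relations are needed. I would carry this out by a secondary induction on the domain: assuming the lemma for all strictly larger domains, I can pass a $d_{i}$ past $g_{1}\cdots g_{r-1}$, and then dispose of the tail $d_{i}d_{j}$ via one of the two relations. Once this normal-form lemma is in place, combining the resulting category presentation with \propref{QuiverPresentationLocallyTrivialCat} yields the stated quiver presentation immediately.
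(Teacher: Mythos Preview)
Your proposal is correct and follows essentially the same route as the paper: reduce via \propref{QuiverPresentationLocallyTrivialCat} to a category presentation of $\EC_{n}$, verify the relations with \lemref{RelationsHoldInEC_n}, use the normal-form \lemref{InductiveStepEC_n} to pull $d_{i}$ to the right, and finish by induction along $\leq_{\EC_{n}}$. The only point you leave implicit is why the two prefixes after peeling off $d_{i}^{A}$ represent the same morphism; the paper checks this by evaluating on each $k\in A_{i}$, but since $d_{i}^{A}$ is onto (hence an epimorphism) this is immediate anyway.
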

\begin{rem}
Note that there is some similarity between the quiver presentation
of $\Bbbk\PC_{n}$ and the monoid presentation of the Catalan monoid
$\C_{n}$ by ``Kiselman relations'' (see \cite{Grensing2014}).
\end{rem}

\paragraph{Cartan matrix}

The category $\EC_{n}$ has $2^{n}$ objects and therefore, $\mathbb{\Bbbk}\EC_{n}$
has $2^{n}$ irreducible representations, which are naturally indexed
by subsets of $[n]$. Given $A,B\subseteq[n]$, \lemref{CartanMatrixLocallyTrivial}
implies that the $(B,A)$ entry of the Cartan matrix is the number
of (total) onto order-preserving and order-decreasing functions $f:A\to B$.
We would like to give some method to enumerate this number. We denote
by $\C(A,B)$ the set of all order-preserving and order-decreasing
functions $f:A\to B$ and by $\EC(A,B)$ the onto functions of $\C(A,B)$.
We will start by giving a way to count the elements of $\EC([n],B)$.
By the inclusion exclusion principle on the poset of subsets of $[n]$
(see \cite[Section 2.1]{Stanley1997}), it is clear that 
\[
|\EC([n],B)|=\sum_{X\subseteq B}(-1)^{|B|-|X|}|\C([n],X)|.
\]
Therefore, it is enough to count the elements of $\C([n],B)$ in order
to get an expression for $|\EC([n],B)|$. It is well known that elements
of $\C([n],[n])$ are in one-to-one correspondence with (North-East)
lattice paths from $(1,1)$ to $\mbox{\ensuremath{(n+1,n+1)}}$ that
remain below the line $y=x$. For details see \cite{Higgins1993}
or the introduction of \cite{Grensing2014} (a correspondence between
$\PC_{n}$ and another type of lattice paths can be found in \cite{Umar2004}).
Order-preserving and order-decreasing functions with image contained
in $B$ correspond to lattice paths whose horizontal steps, i.e. steps
of the form $(i,j)$ to $(i+1,j)$, satisfy $j\in B$. It will be
convenient to use $n$-tuples instead of lattice paths. Every lattice
path from $(1,1)$ to $(n+1,n+1)$ can be identified with an $n$-tuple
$(p_{1},\ldots p_{n})$ where $p_{i}$ is the $y$ coordinate of the
$(i,j)\to(i+1,j)$ step. In the other direction any $n$-tuple $P=(p_{1},\ldots,p_{n})$
which is non decreasing and its elements satisfy $1\leq p_{i}\leq n+1$
corresponds to some lattice path from $(1,1)$ to $(n+1,n+1)$. Therefore
we can represent such paths with $n$-tuples. We say that a lattice
path $P=(p_{1},\ldots,p_{n})$ is below a lattice path $T=(t_{1},\ldots,t_{n})$
and write $P\leq T$ if $p_{i}\leq t_{i}$ for every $i$. This clearly
defines a partial order on lattice paths. It is clear that elements
of $\C([n],B)$ correspond to lattice paths $P=(p_{1},\ldots,p_{n})$
such that $p_{i}\in B$ for every $i$ and $P\leq(1,2,\ldots,n)$
(since they are below $y=x$).
\begin{defn}
For every $B\subseteq[n]$, denote by $\bar{B}=\{\bar{b}_{1},\ldots,\bar{b}_{n}\}$
the lattice path such that $\overline{b_{1}}=1$ and for $i>1$ we
have
\[
\overline{b_{i}}=\begin{cases}
\overline{b_{i-1}} & i\notin B\\
\overline{b_{i-1}}+1 & i\in B.
\end{cases}
\]
In other words, the ascends of $\bar{B}$ are in positions $i\in B$.
\end{defn}
\begin{example}
\label{exa:BarB1}If $n=8$ and $B=\{1,3,4,5,8\}$ then 
\[
\bar{B}=(1,1,2,3,4,4,4,5).
\]
\end{example}
\begin{lem}
There is a one-to-one correspondence between $\C([n],B)$ and lattice
paths (from $(1,1)$ to $(n+1,n+1)$) $P$ such that $P\leq\bar{B}$.
\end{lem}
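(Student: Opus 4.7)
My plan is to exhibit an explicit bijection. Write $B = \{b_{1} < b_{2} < \cdots < b_{k}\}$. For $f \in \C([n],B)$, I would define $\Phi(f) = (p_{1},\ldots,p_{n})$ by letting $p_{i}$ be the unique index $j \in \{1,\ldots,k\}$ satisfying $f(i) = b_{j}$, and I would take the candidate inverse to be $\Psi(P)(i) = b_{p_{i}}$.

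To show $\Phi$ maps into the correct target, I would first observe that $(p_{1},\ldots,p_{n})$ is non-decreasing because $f$ is order-preserving, and each entry lies in $\{1,\ldots,k\} \subseteq \{1,\ldots,n+1\}$, so it represents a valid lattice path from $(1,1)$ to $(n+1,n+1)$. The crucial step is the inequality $\Phi(f) \leq \bar{B}$. Here I would unwind the recursion defining $\bar{B}$ to obtain the closed form that $\bar{b}_{i}$ equals the index in $B$ of the largest element of $B \cap [i]$; for $\C([n],B)$ to be non-empty one must have $1 \in B$ (otherwise $f(1) \leq 1$ and $f(1) \in B$ are incompatible), and under that harmless assumption this reads simply $\bar{b}_{i} = |B \cap [i]|$. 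The order-decreasing condition $f(i) \leq i$ then translates to $b_{p_{i}} \leq i$, which is exactly $p_{i} \leq \bar{b}_{i}$.

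For the reverse direction I would verify that $\Psi(P)$ lies in $\C([n],B)$: it maps into $B$ by construction, it is order-preserving because $(p_{i})$ is non-decreasing, and $p_{i} \leq \bar{b}_{i}$ forces $b_{p_{i}} \leq i$ by the same interpretation of $\bar{b}_{i}$, yielding the order-decreasing condition. The identities $\Psi \circ \Phi = \id$ and $\Phi \circ \Psi = \id$ are immediate from the definitions. The only substantive piece of the argument is the combinatorial identification of $\bar{b}_{i}$ with $|B \cap [i]|$; once this is in hand, both directions of the bijection reduce to a one-line translation between ``order-decreasing'' and ``below $\bar{B}$''.
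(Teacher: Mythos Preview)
Your proposal is correct and is essentially the paper's argument made explicit: your map $\Phi$ is precisely the composite of the paper's two bijections $f\mapsto(f(1),\ldots,f(n))\in\mathcal{P}$ followed by the componentwise relabeling $\sigma_{B}\colon b_{j}\mapsto j$, and your key identity $\bar{b}_{i}=|B\cap[i]|$ is exactly the paper's observation that $\sigma_{B}(P_{B})=\bar{B}$. Your explicit handling of the harmless hypothesis $1\in B$ (needed for $\C([n],B)\neq\varnothing$) is in fact slightly more careful than the paper, which uses it implicitly when defining $P_{B}$.
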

\begin{proof}
We have already seen that there is a one-to-one correspondence between
$\C([n],B)$ and the set $\mathcal{P}$ of all lattice paths $P=(p_{1},\ldots,p_{n})$
such that $p_{i}\in B$ for every $i$ and $P\leq(1,2,\ldots,n)$.
Denote by $P_{B}$ a lattice path whose $i$-th element is the maximal
$b\in B$ such that $b\leq i$. It is easy to see that $P_{B}\in\mathcal{P}$
is a maximum element. Therefore $\mathcal{P}$ is the set of lattice
paths $P=(p_{1},\ldots,p_{n})$ such that $p_{i}\in B$ for every
$i$ and $P\leq P_{B}$. Now the result follows by renaming the name
of elements. More precisely, if $B=\{b_{1},\ldots,b_{k}\}$ we can
define a partial permutation $\sigma_{B}:B\to[k]$ by $\sigma_{B}(b_{i})=i$.
It is clear that $\sigma_{B}$ preserves order and that $\sigma_{B}(P_{B})=\bar{B}$
(where $\sigma_{B}(P_{B})$ means acting by $\sigma_{B}$ componentwise).
Therefore there is a one-to-one correspondence between $\mathcal{P}$
and $\sigma_{B}(\mathcal{P})$ which is the set of lattice paths $P$
such that $P\leq\bar{B}$.
\end{proof}
\begin{example}
If $B=\{1,3,4,5,8\}$ as in \exaref{BarB1} then 
\[
P_{B}=\left(\begin{array}{cccccccc}
1 & 1 & 3 & 4 & 5 & 5 & 5 & 8\end{array}\right)
\]
and 
\[
\sigma_{B}=\left(\begin{array}{ccccc}
1 & 3 & 4 & 5 & 8\\
1 & 2 & 3 & 4 & 5
\end{array}\right)
\]
so indeed $\sigma_{B}(P_{B})=\bar{B}.$
\end{example}

\begin{thm}[{Part of \cite[Theorem 10.7.1]{Krattenthaler2015}}]
\label{thm:EnumeratePaths}Given a lattice path $X=(x_{1},\ldots,x_{n})$
from $(1,1)$ to $(n+1,n+1)$, define a matrix $M_{X}$ by 
\[
\left[M_{X}\right]_{i,j}={x_{i} \choose j-i+1}.
\]
The number of lattice paths $P=(p_{1},\ldots p_{n})$ from $(1,1)$
to $(n+1,n+1)$ which satisfy $P\leq X$ is the determinant of $M_{X}$.
\end{thm}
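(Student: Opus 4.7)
The plan is to prove this as an application of the Lindström--Gessel--Viennot (LGV) lemma, which is the canonical tool for turning counts of non-intersecting lattice paths into determinants. The strategy is to set up a bijection between lattice paths $P$ from $(1,1)$ to $(n+1,n+1)$ satisfying $P \leq X$ and $n$-tuples of non-intersecting lattice paths between carefully chosen sources and sinks, so that the entry $\binom{x_i}{j-i+1}$ of $M_X$ is exactly the count of unrestricted paths from source $A_i$ to sink $B_j$.

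First I would encode $P = (p_1,\ldots,p_n)$ via the weakly increasing sequence of heights of its $n$ horizontal steps, which satisfies $1 \leq p_1 \leq \cdots \leq p_n \leq n+1$ together with $p_i \leq x_i$. Setting $q_i = p_i + (i-1)$ converts this into a strictly increasing sequence $q_1 < q_2 < \cdots < q_n$ with shifted bounds $q_i \leq x_i + i - 1$. Strictly increasing sequences of this type are in the usual bijection with $n$-tuples of non-intersecting monotone lattice paths. I would place the sources $A_i$ and sinks $B_j$ on two parallel anti-diagonals in the standard Krattenthaler fashion, with the coordinates tuned so that the unrestricted path count from $A_i$ to $B_j$ equals $\binom{x_i}{j-i+1}$; under this bijection, the non-crossing condition encodes the strict inequalities $q_1 < \cdots < q_n$, while the choice of sink $B_i$ encodes the dominance condition $p_i \leq x_i$.

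Once the configuration is in place, LGV gives
\[
\#\{P : P \leq X\} \;=\; \det\left[\binom{x_i}{j-i+1}\right]_{i,j=1}^{n},
\]
since in such a configuration only the identity permutation of sources to sinks admits non-intersecting tuples, and the signed sum over the remaining permutations cancels via the standard involution on intersecting tuples.

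The main obstacle is pinning down the exact LGV configuration: one must choose sources and sinks so that, simultaneously, the unrestricted path counts come out to the precise binomials $\binom{x_i}{j-i+1}$ and the non-intersection condition corresponds exactly to the monotonicity plus dominance conditions on $P$. This bookkeeping is delicate because the bound $x_i$ depends on $i$, forcing the $i$-th path's endpoint to depend on $X$. An alternative route that bypasses the explicit LGV setup is induction on $n$: expand $\det M_X$ along the last row and match each cofactor term to a decomposition of $P$ by its final horizontal step, verifying that the recursion satisfied by the determinant reproduces the recursion satisfied by the path count.
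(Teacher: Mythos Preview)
The paper does not prove this theorem at all: it is quoted verbatim as part of \cite[Theorem~10.7.1]{Krattenthaler2015} and used as a black box, so there is no ``paper's own proof'' to compare against. Your proposal therefore goes beyond what the paper does.

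That said, your approach is the right one and is essentially how the cited result is established in Krattenthaler's survey. The LGV lemma is indeed the canonical tool here, and the reduction you describe---encoding $P$ by its nondecreasing sequence of horizontal heights, shifting to a strictly increasing sequence, and then interpreting this as a family of non-intersecting paths---is standard. The alternative inductive/cofactor-expansion route you mention also works and is sometimes cleaner for this particular determinant. One caveat: your sketch is honest about the main difficulty, namely that the sources/sinks must be chosen so that the unrestricted $A_i\to B_j$ count is exactly $\binom{x_i}{j-i+1}$ while non-intersection encodes both monotonicity and the bound $p_i\le x_i$; in the usual setup it is the \emph{sinks} that depend on $X$ (e.g.\ $B_j$ placed so that its height reflects $x_j$), and the entry $\binom{x_i}{j-i+1}$ arises after a row/column reindexing or transposition of the LGV matrix. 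You should pin this down explicitly rather than leave it as ``in the standard Krattenthaler fashion,'' since getting the indices to line up with the specific form $\binom{x_i}{j-i+1}$ (as opposed to, say, $\binom{x_j}{j-i+1}$) is where errors typically creep in.
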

\begin{cor}
\label{cor:SizeOfOrderPreservingOrderIncreasing}The size of $\C([n],B)$
is the determinant of the matrix $M_{\bar{B}}$ where $\bar{B}=(\bar{b}_{1},\ldots,\bar{b}_{n})$
as defined above.
\end{cor}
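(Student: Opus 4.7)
The plan is to observe that this corollary is a direct consequence of combining the immediately preceding lemma (which puts $\C([n],B)$ in bijection with the set of lattice paths $P$ from $(1,1)$ to $(n+1,n+1)$ satisfying $P\leq\bar{B}$) with \thmref{EnumeratePaths} applied to the specific lattice path $X=\bar{B}$.

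First I would recall the setup: by the lemma preceding the corollary, there is a bijection
\[
\C([n],B)\;\longleftrightarrow\;\{P=(p_{1},\ldots,p_{n}) \text{ lattice path from } (1,1) \text{ to } (n+1,n+1) : P\leq\bar{B}\},
\]
so in particular the two sets have the same cardinality. Then I would verify that $\bar{B}=(\bar{b}_{1},\ldots,\bar{b}_{n})$ is itself a valid lattice path from $(1,1)$ to $(n+1,n+1)$, which is immediate from its recursive definition: $\bar{b}_{1}=1$, the sequence is non-decreasing (each step is either $0$ or $+1$), and after $n$ indices it has ascended exactly $|B|\leq n$ times, so $\bar{b}_{n}\leq n+1$ as required for a legal path. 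Hence \thmref{EnumeratePaths} applies with $X=\bar{B}$.

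Applying \thmref{EnumeratePaths} then yields that the number of lattice paths $P$ with $P\leq\bar{B}$ equals $\det M_{\bar{B}}$, where $[M_{\bar{B}}]_{i,j}=\binom{\bar{b}_{i}}{j-i+1}$. Combining with the bijection above gives $|\C([n],B)|=\det M_{\bar{B}}$, which is the claim.

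There is essentially no obstacle here: this is a one-line deduction chaining a bijection with a quoted theorem, and the only thing to check is that $\bar{B}$ really is an admissible input to Krattenthaler's theorem, which follows straight from the definition of $\bar{B}$.
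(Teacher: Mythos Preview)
Your proposal is correct and matches the paper's approach: the corollary is stated without proof because it follows immediately by chaining the preceding lemma (the bijection between $\C([n],B)$ and lattice paths $P\leq\bar{B}$) with \thmref{EnumeratePaths} applied to $X=\bar{B}$, exactly as you describe. One minor inaccuracy: the number of ascents in $\bar{B}$ is $|B\cap\{2,\ldots,n\}|$ rather than $|B|$ (the first coordinate is fixed to $1$ regardless of whether $1\in B$), but this does not affect your conclusion that $\bar{b}_n\leq n+1$.
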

As mentioned above, the inclusion exclusion principle on the poset
of subsets of $[n]$ immediately gives us the following corollary:
\begin{cor}
\label{cor:EnumerationCartanEC_n}The number of order-preserving and
order-decreasing onto functions $f:[n]\to B$ is given by
\[
\sum_{X\subseteq B}(-1)^{|B|-|X|}|M_{\bar{X}}|.
\]
\end{cor}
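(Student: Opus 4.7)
The proof is essentially a one-line composition of two facts that are already in hand by the time the corollary is stated, so my plan is just to articulate why no further work is needed.

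The plan is to start from the inclusion--exclusion identity
\[
|\EC([n],B)|=\sum_{X\subseteq B}(-1)^{|B|-|X|}|\C([n],X)|
\]
which was already derived earlier in the subsection by Möbius inversion on the Boolean lattice of subsets of $[n]$: an arbitrary order-preserving, order-decreasing function from $[n]$ has a well-defined image, so summing $|\EC([n],X)|$ over $X\subseteq B$ gives $|\C([n],B)|$, and standard Boolean Möbius inversion yields the displayed formula.

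Next, I would apply \corref{SizeOfOrderPreservingOrderIncreasing} to each term on the right-hand side. That corollary asserts precisely that $|\C([n],X)|=|M_{\bar X}|$, where $\bar X$ is the lattice path whose ascents occur at the positions indexed by $X$ and $M_{\bar X}$ is the matrix defined in \thmref{EnumeratePaths}. Substituting this identity into the inclusion--exclusion sum gives the claimed formula.

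There is essentially no obstacle: the two ingredients — the Möbius inversion reducing ``onto'' to ``total'' and the determinantal enumeration of the total order-preserving, order-decreasing functions via the Lindström--Gessel--Viennot--type formula quoted from \cite{Krattenthaler2015} — have been set up precisely so that they combine to give the corollary. The only thing to be careful about is bookkeeping: one should note that the formula also makes sense (and remains correct) for $X=\varnothing$, where $|\C([n],\varnothing)|=0$ unless $n=0$, so that term contributes nothing to the sum when $n\geq 1$, consistent with the convention $|M_{\bar\varnothing}|=0$.
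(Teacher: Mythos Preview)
Your argument is exactly the paper's: it simply invokes the inclusion--exclusion identity already displayed and substitutes \corref{SizeOfOrderPreservingOrderIncreasing} termwise, which is all the paper does as well. One small caution on your closing parenthetical: with the paper's conventions $\bar\varnothing=(1,1,\dots,1)$ and the resulting matrix $M_{\bar\varnothing}$ is lower bidiagonal with $1$'s on the diagonal, so $|M_{\bar\varnothing}|=1$, not $0$; this edge case (and more generally $1\notin X$) is not addressed in the paper either, so you may simply drop that remark.
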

Now we want to count the elements in the set $\EC(A,B)$ where the
domain $A$ is not $[n]$ but some subset. If $|A|=m$ we will show
that the number of order-preserving and order-decreasing onto functions
$f:A\to B$ is the number of such functions $f:[m]\to B^{\prime}$
for some appropriate choice of $B^{\prime}$ so it can be also be
enumerated by \corref{EnumerationCartanEC_n}. 
\begin{lem}
Let $A,B\subseteq[n]$ such that $|A|=m$. There exists $A^{\prime}$
such that $B\subseteq A^{\prime}$ and $|\EC(A,B)|=|\EC(A^{\prime},B)|$.
\end{lem}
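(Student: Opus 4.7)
My approach is to extract a combinatorial invariant of the pair $(A, B)$ that determines $|\EC(A, B)|$, and then realize the same invariant by a set $A'$ containing $B$. Writing $A = \{a_{1} < \cdots < a_{m}\}$ and $B = \{b_{1} < \cdots < b_{k}\}$, I would first observe that a function $f \in \EC(A, B)$ is determined by its ``block starts'' $1 = i_{1} < i_{2} < \cdots < i_{k} \leq m$: the onto order-preserving property forces $A$ to decompose into consecutive blocks $f^{-1}(b_r) = \{a_{i_r}, \ldots, a_{i_{r+1}-1}\}$, while the order-decreasing property reduces to $b_{r} \leq a_{i_{r}}$ for each $r$. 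Setting $j_{r} = \min\{j : a_{j} \geq b_{r}\}$ (well-defined whenever $|\EC(A,B)| > 0$), one obtains
\[
|\EC(A, B)| = \#\{(i_{1}, \ldots, i_{k}) : 1 = i_{1} < \cdots < i_{k} \leq m,\ i_{r} \geq j_{r}\text{ for all }r\}.
\]

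Next I would define $\sigma_{1} = 1$ and $\sigma_{r} = \max(j_{r}, \sigma_{r-1}+1)$ for $r \geq 2$. These are the strictest effective lower bounds on the $i_{r}$'s once one accounts for the strict inequalities $i_{r-1} < i_{r}$, and the count above depends only on the pair $\bigl(m, (\sigma_{1}, \ldots, \sigma_{k})\bigr)$. I then build $A' = \{a'_{1} < \cdots < a'_{m}\}$ of size $m$ by declaring $a'_{\sigma_{r}} = b_{r}$ and filling the gaps with consecutive integers, for instance $a'_{\sigma_{r}+s} = b_{r} + s$ for $1 \leq s < \sigma_{r+1} - \sigma_{r}$ (with the convention $\sigma_{k+1} = m+1$). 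Then $B \subseteq A'$ by construction, and a direct check shows that the analogous invariants $j'_{r}$ and $\sigma'_{r}$ attached to $(A',B)$ both equal $\sigma_{r}$, so the same count formula yields $|\EC(A', B)| = |\EC(A, B)|$.

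The main obstacle is verifying that the $a'_{i}$'s actually fit inside $[n]$. This boils down to two inequalities: $\sigma_{r+1} - \sigma_{r} \leq b_{r+1} - b_{r}$ (which controls the gap between successive ``anchor'' positions in $A'$) and $m - \sigma_{k} \leq n - b_{k}$ (which controls the tail past $b_{k}$). Both follow from $A \subseteq [n]$ together with $\sigma_{r} \geq j_{r}$ and the trivial bound $j_{r+1} - j_{r} \leq b_{r+1} - b_{r}$ (since the integer interval $[b_{r}, b_{r+1}-1]$ contains only $b_{r+1} - b_{r}$ values). In the boundary case $|\EC(A, B)| = 0$ the argument is handled separately: if some $a \in A$ lies below $\min B$ or some $j_r$ fails to exist, one may take $A' = B \cup \{x\}$ for a suitable $x < \min B$ in $[n]$ to force $|\EC(A', B)| = 0$ as well.
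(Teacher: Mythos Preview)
Your main argument (the case $|\EC(A,B)|>0$) is correct and takes a genuinely different route from the paper. The paper modifies $A$ one element at a time: at step $r$ it replaces the least element $a\in A_{r-1}$ satisfying $a\ge b_r$ by $b_r$ itself, and observes that precomposition with the obvious bijection $\tau_a:A_{r-1}\to A_r$ yields a bijection $\EC(A_r,B)\to\EC(A_{r-1},B)$; after $k$ steps one has $A'=A_k\supseteq B$. You instead extract a numerical invariant --- the tuple $(\sigma_1,\ldots,\sigma_k)$ of effective lower bounds on the block-start indices --- show that the count depends only on $(m,\sigma)$, and build a fresh $A'$ realizing the same $\sigma$. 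The paper's argument is shorter and sidesteps the verification that $A'\subseteq[n]$; yours makes the combinatorics behind $|\EC(A,B)|$ more transparent and produces a canonical ``smallest'' $A'$ rather than one that still remembers pieces of $A$.

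Your boundary-case handling, however, has a genuine gap. When $|\EC(A,B)|=0$ you propose $A'=B\cup\{x\}$ for some $x<\min B$, but if $\min B=1$ no such $x\in[n]$ exists. In fact the lemma as literally stated is \emph{false} in this regime: for $n=3$, $A=\{1,2\}$, $B=\{1,3\}$ one has $|\EC(A,B)|=0$, whereas every $A'\subseteq[3]$ with $B\subseteq A'$ (namely $\{1,3\}$ or $\{1,2,3\}$) satisfies $|\EC(A',B)|=1$. (Your case split is also incomplete: here no $a\in A$ lies below $\min B$, yet $j_2$ fails to exist; and there are further examples where all $j_r$ exist but $\sigma_k>m$.) The paper's own proof shares this defect, since it tacitly assumes that ``the minimal $a\ge b_i$'' exists at each step. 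The clean fix for both arguments is simply to add the hypothesis $\EC(A,B)\neq\varnothing$, which is the only case needed for computing the Cartan matrix.
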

\begin{proof}
Assume $B=\{b_{1},\ldots,b_{k}\}$, ordered by the standard order.
We will build $A^{\prime}$ from $A$ in $k$ steps. At the first
step we take the minimal element $a\in A$ such that $b_{1}\leq a$
and define $A_{1}=(A\backslash\{a\})\cup\{b_{1}\}$. It should be
clear that $|\EC(A_{1},B)|=|\EC(A,B)|$. We repeat this process with
the other elements of $B$. In the $i$-th step we have already obtained
a set $A_{i-1}$ such $|\EC(A_{i-1},B)|=|\EC(A,B)|$ such that $b_{1},\ldots,b_{i-1}\in A_{i-1}$.
Now take the minimal element $a\in A$ such that $b_{i}\leq a$ and
define $A_{i}=(A_{i-1}\backslash\{a\})\cup\{b_{i}\}$. Now it is not
difficult to see that $|\EC(A_{i},B)|=|\EC(A_{i-1},B)|$. Formally
we can define a bijection $\tau_{a}:A_{i-1}\to A_{i}$ which is the
identity on $A_{i-1}\backslash\{a\}$ and $\tau_{a}(a)=b_{i}$. Now
the function $\Phi:\EC(A_{i},B)\to\EC(A_{i-1},B)$ defined by $\Phi(f)=f\tau_{a}$
is clearly a bijection between the two sets. Finally we define $A^{\prime}=A_{k}$
and it is clear that $B\subseteq A^{\prime}$ and $|\EC(A^{\prime},B)|=|\EC(A,B)|$. 
\end{proof}
It is now left to count the set $\EC(A^{\prime},B)$. Assume $A^{\prime}=\{a_{1},\ldots,a_{m}\}$
ordered by the standard order. Define $\sigma_{A^{\prime}}$ to be
the partial bijection $\sigma_{A^{\prime}}(a_{i})=i$ and denote $\sigma_{A^{\prime}}(B)=\{\sigma_{A^{\prime}}(b_{1}),\ldots,\sigma_{A^{\prime}}(b_{k})\}$.
\begin{lem}
The following equality holds 
\[
|\EC(A^{\prime},B)|=|\EC([m],\sigma_{A^{\prime}}(B))|
\]
\end{lem}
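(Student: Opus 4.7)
The plan is to exhibit an explicit bijection $\Phi\colon \EC(A',B)\to \EC([m],\sigma_{A'}(B))$ induced by the order-isomorphism $\sigma_{A'}$. The key structural observation is that $\sigma_{A'}\colon A'\to[m]$ is an order-isomorphism between the poset $A'$ (with the order inherited from $[n]$) and the standard poset $[m]$, and that since $B\subseteq A'$, the map $\sigma_{A'}$ restricts to a well-defined order-isomorphism $B\to\sigma_{A'}(B)$.

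First I would define $\Phi$ on a function $f\in\EC(A',B)$ by the conjugation formula
\[
\Phi(f)(i) \;=\; \sigma_{A'}\!\bigl(f(a_i)\bigr), \qquad i\in[m],
\]
and verify that $\Phi(f)$ lies in $\EC([m],\sigma_{A'}(B))$. Being onto is immediate since $f$ is onto $B$ and $\sigma_{A'}$ is a bijection from $B$ to $\sigma_{A'}(B)$. Order-preservation is also immediate because $\sigma_{A'}$ is an order-isomorphism on both sides: if $i\leq j$ then $a_i\leq a_j$, so $f(a_i)\leq f(a_j)$, so $\Phi(f)(i)\leq \Phi(f)(j)$.

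The part requiring a small argument, and the one place where the hypothesis $B\subseteq A'$ is essential, is the order-decreasing condition. Given $i\in[m]$, the value $f(a_i)$ lies in $B$, hence in $A'$, so we may write $f(a_i)=a_k$ for some $k\in[m]$. The order-decreasing property of $f$ gives $a_k=f(a_i)\leq a_i$, which, since $A'$ is linearly ordered and $\sigma_{A'}$ is order-preserving, forces $k\leq i$. Hence $\Phi(f)(i)=\sigma_{A'}(a_k)=k\leq i$, so $\Phi(f)$ is order-decreasing.

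Finally I would define the inverse $\Psi\colon \EC([m],\sigma_{A'}(B))\to \EC(A',B)$ by $\Psi(g)(a_i)=\sigma_{A'}^{-1}(g(i))$ and check by exactly the same argument (using that $\sigma_{A'}(B)\subseteq[m]$ and that $\sigma_{A'}^{-1}$ is an order-isomorphism) that $\Psi(g)\in\EC(A',B)$. The identities $\Phi\circ\Psi=\id$ and $\Psi\circ\Phi=\id$ are then formal, completing the proof. I do not expect any serious obstacle; the only subtlety is the order-decreasing check described above, which would fail without the assumption $B\subseteq A'$ ensured in the previous lemma.
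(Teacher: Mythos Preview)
Your proof is correct and is exactly the explicit version of the paper's argument: the paper simply says that since $\sigma_{A'}$ is an order-preserving bijection, applying it amounts to renaming elements and the claim is obvious. Your conjugation map $\Phi(f)=\sigma_{A'}\circ f\circ\sigma_{A'}^{-1}$ is precisely this ``renaming,'' and your careful check of the order-decreasing condition (using $B\subseteq A'$) spells out the one point the paper leaves implicit.
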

\begin{proof}
Since $\sigma_{A^{\prime}}$ is a (partial) permutation which preserves
order, we can think of applying it as just renaming the elements of
the sets so the claim is obvious.
\end{proof}
In conclusion, we have displayed a method to enumerate the set $\EC(A,B)$
of all onto order-preserving and order-decreasing functions $f:A\to B$
which is the $(B,A)$ entry of the Cartan matrix.

\paragraph{Loewy length}

Recall that $\Rad\Bbbk\EC_{n}$ is spanned by all the non-invertible
morphisms of $\EC_{n}$, i.e. all the non identity morphisms.
\begin{lem}
\label{lem:LoweyLengthLemma}Let $f\in\EC_{n}(A,B)$ be a non-identity
morphism. $f$ is the composition of at most ${n \choose 2}$ non-identity
morphisms.
\end{lem}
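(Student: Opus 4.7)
The plan is to define a non-negative integer weight on morphisms of $\EC_n$ that measures how far a morphism is from the identity, show that this weight is bounded by $\binom{n}{2}$, and induct on the weight to produce a decomposition whose length equals the weight. For $f \in \EC_n(A,B)$ set
\[
N(f) = \sum_{x \in A}(x - f(x)).
\]
Since $f$ is order-decreasing each summand is non-negative, and $N(f) = 0$ iff $f$ is an identity. The bound $N(f) \leq \binom{n}{2}$ is immediate: $A \subseteq [n]$ and $f(x) \geq 1$, so $N(f) \leq \sum_{x \in A}(x-1) \leq \sum_{x=1}^{n}(x-1) = \binom{n}{2}$.

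It remains to show by induction on $N(f)$ that every non-identity $f \in \EC_n(A,B)$ is a composition of exactly $N(f)$ irreducible morphisms. The base case $N(f)=1$ forces $f$ to coincide with the identity except that a single $i \in A$ is sent to $i-1$, so $f = d_i^A$ is already irreducible. For the inductive step, let $i$ be the smallest element of $A$ with $f(i) < i$ (this exists since $f$ is not an identity). I would peel off the arrow $d_i^A : A \to A_i$ on the right and define $g : A_i \to B$ so that $f = g \cdot d_i^A$: if $i-1 \notin A$, set $g(j) = f(j)$ for $j \in A\setminus\{i\}$ and $g(i-1) = f(i)$; if $i-1 \in A$, set $g = f|_{A\setminus\{i\}}$.

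The main verification is that $g$ is a well-defined morphism of $\EC_n$ with $N(g) = N(f)-1$. Order-preservation is inherited from $f$; order-decrease at the new value $g(i-1) = f(i)$ uses $f(i) \leq i-1$; surjectivity onto $B$ follows from $g \cdot d_i^A = f$. The subtle point---and the step I would single out as the main obstacle---is the consistency of $g$ when $i-1 \in A$: by minimality of $i$ one has $f(i-1) = i-1$, and by order-preservation combined with $f(i) < i$, also $f(i) = i-1$, so both definitions of $g$ on $i-1$ agree. A direct computation in each of the two cases then gives $N(g) = N(f) - 1$, since either one replaces the contribution $i - f(i) = i - (i-1) = 1$ by nothing, or one replaces $i - f(i)$ by $(i-1) - f(i)$.

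By the inductive hypothesis $g$ is a composition of $N(g) = N(f)-1$ irreducible morphisms (or is an identity when $N(f) = 1$), so $f = g \cdot d_i^A$ is a composition of $N(f) \leq \binom{n}{2}$ irreducible morphisms. Since irreducible morphisms are in particular non-identity, the lemma follows.
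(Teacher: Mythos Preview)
You have read the lemma as asking for the \emph{existence} of a decomposition of $f$ into at most $\binom{n}{2}$ non-identity factors, and your argument carefully constructs one of length exactly $N(f)$. But that reading makes the lemma trivial (the length-$1$ decomposition $f=f$ already works), and it is not what the paper needs: the lemma feeds into the proof that $\Rad^{\binom{n}{2}+1}\Bbbk\EC_n=0$, which requires that \emph{no} morphism can be written as a product of more than $\binom{n}{2}$ non-identity morphisms. Your inductive construction, correct as it is, says nothing about other, possibly longer, factorisations of $f$.

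Your weight $N(f)=\sum_{x\in A}(x-f(x))$ is the right tool; you just used it in the wrong direction. What is needed is the observation that $N$ is super-additive: for $h\in\EC_n(A,B)$ and $g\in\EC_n(B,C)$ one has
\[
N(gh)=N(h)+\sum_{y\in B}\lvert h^{-1}(y)\rvert\,(y-g(y))\geq N(h)+N(g),
\]
since $h$ is onto and $y-g(y)\geq 0$. Hence any factorisation $f=g_1\cdots g_r$ into non-identity morphisms gives $r\leq N(g_1)+\cdots+N(g_r)\leq N(f)\leq\binom{n}{2}$, which is the required bound. The paper's own proof achieves the same effect with an even simpler invariant: it puts $S(X)=\sum_{x\in X}x$ on objects (shifted so that $S$ ranges over $\{0,\dots,\binom{n}{2}\}$) and notes that every non-identity morphism strictly decreases $S$, so no chain of such morphisms can have length exceeding $\binom{n}{2}$.
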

\begin{proof}
In this proof it will be more convenient to assume that the objects
of $\EC_{n}$ are all the subsets of $\{0,\ldots,n-1\}$. For every
set $A\subseteq\{0,\ldots,n-1\}$ define $S(A)$ to be the sum of
its elements
\[
S(A)=\sum_{a\in A}a.
\]
Since $f:A\to B$ is onto and order-decreasing, we must have that
$S(B)<S(A)$. Since $S(\{0,\ldots,n-1\})={n \choose 2}$ it is clear
that a morphism cannot be written as a composition of more than ${n \choose 2}$
non-identity morphism.
\end{proof}
\begin{prop}
\label{prop:LoweyLengthOfPC_n}The Loewy length of $\Bbbk\EC_{n}$
and hence of $\mathbb{\Bbbk}\PC_{n}$ is ${n \choose 2}+1$.
\end{prop}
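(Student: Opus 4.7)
The plan is to establish matching bounds $\mathcal{R}^{{n\choose 2}+1}(\EC_n) = 0$ and $\mathcal{R}^{{n\choose 2}}(\EC_n)\neq 0$, from which the Loewy length ${n\choose 2}+1$ follows at once.

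The upper bound is essentially immediate from \corref{RadToThek} together with \lemref{LoweyLengthLemma}. Since $\EC_n$ is skeletal and locally trivial, every non-isomorphism is a non-identity morphism; moreover, the sum-of-elements argument in the proof of \lemref{LoweyLengthLemma} in fact rules out any factorization of length greater than ${n\choose 2}$ into non-identities (each non-identity factor strictly decreases $S$, and $S$ can decrease by at most $S(\{0,\ldots,n-1\})={n\choose 2}$). Hence no basis element of $\Bbbk\EC_n$ belongs to $\mathcal{R}^{{n\choose 2}+1}(\EC_n)$.

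For the lower bound I will exhibit a single basis element of $\Bbbk\EC_n$ that admits a factorization into ${n\choose 2}$ irreducible morphisms; by \corref{RadToThek}, such a basis element is a nonzero element of $\mathcal{R}^{{n\choose 2}}(\EC_n)$. Using the shifted labelling of the proof of \lemref{LoweyLengthLemma}, the natural candidate is the constant morphism $f\colon\{0,1,\ldots,n-1\}\to\{0\}$, which attains the maximal defect $S(\{0,\ldots,n-1\})-S(\{0\})={n\choose 2}$.

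I build the factorization by a cascade-merge scheme: for $k=0,1,\ldots,n-2$, at stage $k$ the running set is $\{0,k+1,k+2,\ldots,n-1\}$; I apply $d_{k+1},d_k,\ldots,d_2$ consecutively to cascade the element $k+1$ down to $1$ ($k$ steps, each landing in an unoccupied slot, hence each a defect-one irreducible of the running set), and finally apply $d_1$ to absorb $1$ into $0$. The running set becomes $\{0,k+2,\ldots,n-1\}$, the starting configuration of stage $k+1$. Stage $k$ contributes $k+1$ irreducibles, so the full factorization has length $\sum_{k=0}^{n-2}(k+1)={n\choose 2}$, and a direct inspection confirms that the composite function is indeed the constant map $f$. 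The main work is the bookkeeping in this scheme: at each step one must verify that the chosen $d_j$ is a genuine irreducible morphism of the current set (that is, $j$ is in the set and $j-1$ is not), so that every factor contributes defect exactly one and the factorization has the claimed length ${n\choose 2}$. Combining both bounds yields the Loewy length ${n\choose 2}+1$.
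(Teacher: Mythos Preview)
Your argument is correct and mirrors the paper's proof almost exactly: the upper bound via \lemref{LoweyLengthLemma}, and the lower bound by factoring the constant map $[n]\to\{1\}$ (in your shifted labels, $\{0,\ldots,n-1\}\to\{0\}$) as a cascade $f_n\cdots f_2$ with $f_i=d_2\cdots d_i$, giving ${n\choose 2}$ irreducible factors. One small wording issue: irreducibility of $d_j^A$ only requires $j\in A$, not $j-1\notin A$; your merge step $d_1$ has $0$ in the current set, yet is still irreducible and still drops $S$ by exactly $1$, so the count goes through.
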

\begin{proof}
By \lemref{LoweyLengthLemma} it is clear that $\Rad^{k}\mathbb{\Bbbk}\EC_{n}=0$
where $k={n \choose 2}+1$. It is only left to prove that $\Rad^{k}\mathbb{\Bbbk}\EC_{n}\neq0$
where $k={n \choose 2}$. Recall that we have denoted by $d_{i}^{A}$
the irreducible morphism whose domain is $A$ and $i\in A$ is its
unique element such that $d_{i}^{A}(i)=i-1$. For $2\leq i\leq n$
define $f_{i}:\{1,i,i+1,\ldots,n\}\to\{1,i+1,\ldots,n\}$ to be the
morphism which is the identity on all elements except $f_{i}(i)=1$.
It is clear that $f_{i}$ can be written as a composition of $i-1$
morphisms 
\[
f_{i}=d_{2}\cdots d_{i-1}d_{i}
\]
where we have dropped the superscripts because they can be understood
from the context. Now, it is easy to see that the constant function
$\mathbf{1}:[n]\to\{1\}$ can be written as the following composition
\[
\boldsymbol{1}=f_{n}\cdots f_{3}f_{2}
\]
and therefore we have found a morphism which can be written as a composition
of ${n \choose 2}$ morphisms and we are done.
\end{proof}

\subsection{\label{subsec:Order_Decreasing_Partial_Functio}Order-decreasing
partial functions}

In this section we will study the representation theory of the monoid
$\PF_{n}$ of all order-decreasing partial functions using the category
$\EF_{n}$. We remark that $\PF_{n}$ is isomorphic to the monoid
of all order-decreasing \emph{total} functions on $n+1$ elements.
This fact was first observed in \cite[Corollary 2.4.3]{UmarThesis1992}.
Another proof due to the referee of \cite{Stein2016} can be found
in \cite[Lemma 5.3]{Stein2016}. We remark that a monoid is $\Lc$-trivial
(that is, any two distinct elements generate different left ideals)
if and only if it is isomorphic to a submonoid of $\PF_{n}$ for some
$n\in\mathbb{N}$ \cite[Chapter 4, Theorem 3.6]{Pin1986}. For every
set $A\subseteq[n]$, it is clear that the identity function $1_{A}$
is the only order-decreasing function with domain and image being
$A$ so $\EF_{n}$ is indeed a locally trivial category. Moreover,
if $A,B\subseteq[n]$ for $A\neq B$ then at least one of the hom-sets
$\EF_{n}(A,B)$ or $\EF_{n}(B,A)$ is empty so the objects $A$ and
$B$ are not isomorphic hence $\EF_{n}$ is skeletal. 

\paragraph*{Blocks}

It is clear that for every non-empty $A\subseteq[n]$ there exists
a constant order-decreasing function $f:A\to\{1\}$. Therefore the
category $\EF_{n}$ has precisely two connected components with the
$\varnothing$ object being isolated. Since $\PF_{n}$ is a monoid
with zero we can use the same argument as in \lemref{DecompositionOfCPTnToConnectedComponents}
to obtain the following.
\begin{lem}
The decomposition of $\Bbbk\PF_{n}$ into a direct product of connected
algebras is

\[
\Bbbk\PF_{n}\simeq\mathbb{\Bbbk}\times\Bbbk_{0}\PF_{n}.
\]
\end{lem}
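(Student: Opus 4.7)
The plan is to mimic verbatim the proof of \lemref{DecompositionOfCPTnToConnectedComponents} (for $\Bbbk\PO_n$) and the analogous lemma for $\Bbbk\PC_n$, since all the necessary ingredients have already been assembled in the paragraph preceding the statement. First, I would invoke the isomorphism $\Bbbk\PF_n\simeq\Bbbk\EF_n$ from \cite[Section 5]{Stein2016}. Because $\EF_n$ is a skeletal locally trivial category (as established in the paragraph above), \lemref{ConnectedComponentsOfLocallyTrivialCatAlg} applies and tells us that the number of blocks of $\Bbbk\EF_n$ equals the number of connected components of $\EF_n$ viewed as a graph.

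Second, I would count these components. The object $\varnothing$ is isolated: the only morphism with $\varnothing$ as domain or range is the identity, since there are no onto functions between $\varnothing$ and any non-empty set. Conversely, for any non-empty $A\subseteq[n]$ the constant function $A\to\{1\}$ sending every element to $1$ is order-decreasing and onto, so every non-empty $A$ is connected to $\{1\}$ in $\EF_n$. Hence $\EF_n$ has exactly two connected components, and consequently $\Bbbk\PF_n$ has exactly two blocks.

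Third, to pin down the explicit form of the decomposition, I would use that $\PF_n$ is a monoid with zero (the empty partial function). By the standard decomposition recalled earlier in the paper (following \cite[Remark 5.3]{Steinberg2015}),
\[
\Bbbk\PF_n\simeq\Bbbk\{0\}\times\Bbbk_0\PF_n\simeq\Bbbk\times\Bbbk_0\PF_n.
\]
Since the factor $\Bbbk$ is already connected and the total block count is two, the remaining factor $\Bbbk_0\PF_n$ must itself be connected, and this is the desired decomposition into connected algebras.

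There is no real obstacle here; the argument is essentially a citation-chain combining \lemref{ConnectedComponentsOfLocallyTrivialCatAlg}, the isomorphism with $\Bbbk\EF_n$, and the zero-element decomposition. The only thing one needs to be careful about is the explicit verification that $\varnothing$ is isolated and that all non-empty subsets lie in one component, both of which are immediate from the definition of $\EF_n$ and the existence of the constant map to $\{1\}$.
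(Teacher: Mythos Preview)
Your proposal is correct and follows exactly the same approach as the paper: the paper notes that every non-empty $A$ admits a constant order-decreasing map onto $\{1\}$, so $\EF_n$ has two connected components with $\varnothing$ isolated, and then invokes the same zero-element decomposition argument used in \lemref{DecompositionOfCPTnToConnectedComponents}. You have simply written out in full what the paper abbreviates.
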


\paragraph{Cartan matrix}

The category $\EF_{n}$ has $2^{n}$ objects and therefore, $\mathbb{\Bbbk}\EF_{n}$
has $2^{n}$ irreducible representations, which are naturally indexed
by subsets of $[n]$. Given $B\subseteq[n]$ and $i\in[n]$ we denote
\[
B_{\leq i}=\{b\in B\mid b\leq i\}
\]

\begin{lem}
\label{lem:NumberOfWeaklyDecreasingFunctions}The number of order-decreasing
(total, but not necessarily onto) functions $f:A\to B$ is 
\[
\prod_{i\in A}|B_{\leq i}|
\]
\end{lem}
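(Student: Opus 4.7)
The plan is to proceed by a direct multiplicative counting argument, exploiting the fact that the order-decreasing condition $f(i) \leq i$ is a pointwise constraint with no interaction between different inputs. First I would fix $A, B \subseteq [n]$ and observe that specifying an arbitrary function $f \colon A \to B$ amounts to independently choosing, for each $i \in A$, an image $f(i) \in B$. The extra condition that $f$ be order-decreasing is simply $f(i) \leq i$ for every $i \in A$, which only restricts $f(i)$ and does not couple distinct values of $i$ (unlike the order-preserving condition, which would force a joint constraint).

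Given this, for each $i \in A$ the set of admissible values for $f(i)$ is precisely
\[
\{b \in B \mid b \leq i\} = B_{\leq i},
\]
so there are exactly $|B_{\leq i}|$ choices at $i$. Since the choices at different elements of $A$ are independent, the multiplication principle yields
\[
|\{f \colon A \to B \mid f \text{ order-decreasing}\}| = \prod_{i \in A} |B_{\leq i}|,
\]
as claimed. Note that if $B_{\leq i} = \varnothing$ for some $i \in A$ (equivalently, $\min B > i$ for some $i \in A$), then no order-decreasing function exists and the product correctly evaluates to $0$.

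There is no genuine obstacle here; the statement is essentially a restatement of the fact that the order-decreasing condition is separable over the input coordinates. The only small point to verify is that the total/non-onto hypothesis is what makes the choices genuinely independent (requiring surjectivity would reintroduce a global constraint and break the clean product formula), and this is exactly the setting of the lemma.
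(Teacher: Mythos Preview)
Your proposal is correct and follows essentially the same approach as the paper's own proof: both observe that the order-decreasing condition imposes the independent pointwise constraint $f(i)\in B_{\leq i}$ for each $i\in A$, and then apply the multiplication principle. Your additional remarks on the vanishing case and on why surjectivity would destroy independence are accurate but go slightly beyond what the paper records.
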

\begin{proof}
The image of every $i\in A$ could be any element in $B$ which is
smaller than $i$ hence there are $|B_{\leq i}|$ options. The choice
of image of any two elements of $A$ is independent so we just take
the product of the number of options.
\end{proof}
\begin{lem}
The Cartan matrix of $\mathbb{\Bbbk}\EF_{n}$ is a $2^{n}\times2^{n}$
matrix. Given $A,B\subseteq[n]$ the $(B,A)$ entry of the Cartan
matrix is 
\[
\sum_{X\subseteq B}(-1)^{|B|-|X|}\prod_{i\in A}|X{}_{\leq i}|
\]
\end{lem}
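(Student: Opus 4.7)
The plan is to compute the $(B,A)$ entry of the Cartan matrix via \lemref{CartanMatrixLocallyTrivial}, which identifies this entry with $|\EF_n(A,B)|$, the number of onto order-decreasing functions $f\colon A\to B$. The preceding \lemref{NumberOfWeaklyDecreasingFunctions} enumerates order-decreasing total functions $f\colon A\to B$ with no onto requirement, so the remaining task is standard inclusion-exclusion on the Boolean lattice of subsets of $B$.

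More precisely, for each $X\subseteq B$ let
\[
g(X)=|\{f\colon A\to X\mid f\ \text{order-decreasing and onto}\}|
\]
and
\[
h(X)=|\{f\colon A\to B\mid f\ \text{order-decreasing and}\ \im f\subseteq X\}|.
\]
An order-decreasing function $f\colon A\to B$ with image contained in $X$ is the same as an order-decreasing total function $f\colon A\to X$, so \lemref{NumberOfWeaklyDecreasingFunctions} gives $h(X)=\prod_{i\in A}|X_{\leq i}|$. Partitioning order-decreasing functions with image in $X$ by their exact image yields $h(X)=\sum_{Y\subseteq X}g(Y)$, and Möbius inversion on the subset lattice (with Möbius function $(-1)^{|X|-|Y|}$) produces
\[
g(B)=\sum_{X\subseteq B}(-1)^{|B|-|X|}h(X)=\sum_{X\subseteq B}(-1)^{|B|-|X|}\prod_{i\in A}|X_{\leq i}|,
\]
which is exactly the claimed formula for the $(B,A)$ entry.

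I do not anticipate a real obstacle: the only thing to check is that \lemref{NumberOfWeaklyDecreasingFunctions} correctly counts functions with image contained in $X$ (which is immediate, since ``image $\subseteq X$'' is the same as ``codomain $X$''), and that the Möbius function of the Boolean lattice is $(-1)^{|B|-|X|}$, a standard fact referenced earlier in the paper in the discussion of $\Bbbk\EC_n$. The size of the matrix is $2^n\times 2^n$ because $\EF_n$ has $2^n$ objects, one for each subset of $[n]$.
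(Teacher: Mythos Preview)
Your proposal is correct and follows essentially the same argument as the paper: identify the $(B,A)$ entry with $|\EF_n(A,B)|$ via \lemref{CartanMatrixLocallyTrivial}, then combine \lemref{NumberOfWeaklyDecreasingFunctions} with inclusion--exclusion on the Boolean lattice of subsets of $B$. The paper's proof is terser but invokes exactly these two ingredients.
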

\begin{proof}
The $(B,A)$ entry of the Cartan matrix is the number of (total) onto
order-decreasing functions $f:A\to B$ by \lemref{CartanMatrixLocallyTrivial}.
The claim follows immediately by the inclusion-exclusion principle
on the poset of subsets of $[n]$ (see \cite[Section 2.1]{Stanley1997})
and \lemref{NumberOfWeaklyDecreasingFunctions}.
\end{proof}

\paragraph{Quiver presentation}

Describing a quiver presentation for $\mathbb{\Bbbk}\PF_{n}$ is similar
to the case of $\mathbb{\Bbbk}\PC_{n}$ but a bit more complicated.
Again, $\EF_{n}$ is a skeletal locally trivial category, so its quiver
is the subgraph of all irreducible morphisms. In order to describe
the irreducible morphisms we will use the following notation. Let
$A\subseteq[n]$ and let $i,j\in[n]$ be two distinct elements. We
will write $i\vartriangleleft_{A}j$ if $i<j$ and $i<x\leq j$ implies
that $x\in A$. In other words, if all the elements between $i$ and
$j$ (including $j$) are in $A$. 
\begin{lem}[{\cite[Lemma 5.4]{Stein2016}}]
\label{lem:IrreduciblesInEF_n} A morphism $f\in\EF_{n}(A,B)$ is
irreducible if and only if there exists $j\in A$ such that $f(i)=i$
for any $i\in A\backslash\{j\}$ and $f(j)\vartriangleleft_{A}j$. 
\end{lem}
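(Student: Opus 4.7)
The plan is to use the standard characterization: in a locally trivial category, a morphism is irreducible iff it is a non-identity that cannot be written as a composition of two non-identity morphisms (equivalently, it represents a non-zero class in $\mathcal{R}(E)/\mathcal{R}^{2}(E)$). Throughout, keep in mind that the only isomorphisms in $\EF_{n}$ are the identities.

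For the easier direction ($\Leftarrow$), suppose $f$ has the stated form: $f$ fixes every element of $A\setminus\{j\}$ and $f(j)=i$ with $i\vartriangleleft_{A}j$. Consider any factorisation $f=gh$ with $h\in\EF_{n}(A,C)$ and $g\in\EF_{n}(C,B)$; I want to show $g$ or $h$ is an identity. For each $x\in A\setminus\{j\}$, $g(h(x))=x$, $h(x)\leq x$, and $g(h(x))\leq h(x)$ force $h(x)=x$ and $g(x)=x$. For $j$ itself: if $h(j)=j$ then $h=\mathrm{id}_{A}$. Otherwise $h(j)<j$ and $g(h(j))=i$ forces $i\le h(j)\le j-1$. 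But $i+1,\dots ,j-1\in A$ by $i\vartriangleleft_{A}j$ and we just showed $g$ fixes all of $A\setminus\{j\}$, so $g(h(j))=h(j)$ unless $h(j)=i$. Hence $h(j)=i$, and then $C=(A\setminus\{j\})\cup\{i\}=B$ and $g$ is the identity.

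For the harder direction ($\Rightarrow$), assume $f$ is irreducible; I split into two subcases and exhibit a non-trivial factorisation in each. \emph{Subcase 1: $f$ moves at least two elements.} Let $j_{1}<\cdots <j_{k}$ ($k\ge 2$) be the moved elements with $f(j_{l})=i_{l}<j_{l}$. Define $h:A\to C=(A\setminus\{j_{1}\})\cup\{i_{1}\}$ by $h(j_{1})=i_{1}$ and $h(x)=x$ otherwise, and define $g:C\to B$ by $g(j_{l})=i_{l}$ for $l\ge 2$, $g(i_{1})=i_{1}$, and $g(x)=x$ on the remaining elements. Both are order-decreasing and surjective; consistency of $g$ on $C$ works because $i_{1}<j_{1}<j_{l}$ for $l\ge 2$, so $i_{1}$ never collides with the $j_{l}$'s. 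Since $h$ moves $j_{1}$ and $g$ moves $j_{2}$, neither is an identity, contradicting irreducibility. \emph{Subcase 2: $f$ moves exactly one element $j$ to $i$, but $i\not\vartriangleleft_{A}j$.} Then there exists $y\in(i,j)$ with $y\notin A$. Set $C=(A\setminus\{j\})\cup\{y\}$ and factor $f=gh$ via $h(j)=y$, $h|_{A\setminus\{j\}}=\mathrm{id}$, and $g(y)=i$, $g|_{C\setminus\{y\}}=\mathrm{id}$. Both maps are order-decreasing (since $i<y<j$) and surjective onto the stated ranges, and neither is the identity.

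The main obstacle is the bookkeeping in Subcase 1: one must verify that the definition of $g$ on the three pieces $A\setminus\{j_{1},\dots,j_{k}\}$, $\{j_{2},\dots,j_{k}\}$, and $\{i_{1}\}$ of $C$ is internally consistent and that $g$ really surjects onto $B=f(A)$. The key observation making this painless is the inequality $i_{1}<j_{1}<j_{l}$, which prevents any overlap between $i_{1}$ and the moved points $j_{l}$ ($l\ge 2$). Everything else is routine once that point is fixed.
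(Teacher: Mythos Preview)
Your argument is correct. The paper does not prove this lemma here; it is quoted from \cite[Lemma 5.4]{Stein2016}, so there is no in-paper proof to compare against. Your approach---checking directly that the described morphisms admit only trivial factorisations, and conversely factoring any morphism that either moves two points or moves one point ``too far'' through an intermediate set---is the natural elementary argument and is exactly what one would expect the cited proof to do.

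One small remark on exposition: in the $(\Rightarrow)$ direction you write ``assume $f$ is irreducible; I split into two subcases and exhibit a non-trivial factorisation in each,'' which reads as if you are contradicting yourself. What you mean is the contrapositive: if $f$ fails the stated condition, then it falls into one of the two subcases and is therefore reducible. The mathematics is fine, but tightening that sentence would make the logic clearer.
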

\begin{cor}
The vertices in the quiver of $\mathbb{\Bbbk}\PF_{n}$ and $\mathbb{\Bbbk}\EF_{n}$
are in one-to-one correspondence with subsets of $[n]$. For $A,B\subseteq[n]$,
the arrows from $A$ to $B$ are in one-to-one correspondence with
onto functions $f:A\to B$ for which there exists $j\in A$ such that
$f(i)=i$ for $i\in A\backslash\{j\}$ and $f(j)\vartriangleleft_{A}j$. 
\end{cor}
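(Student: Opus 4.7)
The plan is to assemble this corollary directly from machinery that is already in place, so no new combinatorial argument is needed. First I would invoke the isomorphism $\mathbb{\Bbbk}\PF_{n}\simeq\mathbb{\Bbbk}\EF_{n}$ recalled at the start of \secref{RepresentationTheoryOfMonoidsOfPartialFunctions}, which reduces the question about $\mathbb{\Bbbk}\PF_{n}$ to the corresponding question about $\mathbb{\Bbbk}\EF_{n}$. Since Morita equivalent algebras have the same quiver, it suffices to describe the quiver of $\mathbb{\Bbbk}\EF_{n}$.

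Next I would recall the two structural observations made at the beginning of this subsection: $\EF_{n}$ is \emph{skeletal} (distinct subsets of $[n]$ are never isomorphic in $\EF_{n}$) and \emph{locally trivial} (the only order-decreasing function with equal domain and image is the identity). This places $\EF_{n}$ in the setting of \secref{AlgebrasOfLocallyTrivialCat}, so \corref{AlgebrasOfLocallyTrivialCatAreBasic} gives that $\mathbb{\Bbbk}\EF_{n}$ is split basic and \lemref{PrimitiveIdempotentsCatTrivialMonoids} identifies a complete set of primitive orthogonal idempotents with the objects of $\EF_{n}$, i.e., with the subsets of $[n]$. This yields the vertex statement.

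For the arrows, I would apply \lemref{QuiverOfCatTrivialMonoids}, which asserts that for a finite skeletal locally trivial category the quiver is the subgraph with the same set of objects whose edges are exactly the irreducible morphisms. Then I would quote \lemref{IrreduciblesInEF_n}, which characterizes the irreducible morphisms of $\EF_{n}$ as precisely those onto functions $f\in\EF_{n}(A,B)$ for which there exists $j\in A$ with $f(i)=i$ for all $i\in A\setminus\{j\}$ and $f(j)\vartriangleleft_{A}j$. Combining these two lemmas yields the claimed description of the arrows verbatim.

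There is no real obstacle here: the corollary is a formal combination of the general quiver description for skeletal locally trivial categories (\lemref{QuiverOfCatTrivialMonoids}) with the already-established classification of irreducible morphisms in $\EF_{n}$ (\lemref{IrreduciblesInEF_n}), transported to $\mathbb{\Bbbk}\PF_{n}$ via the algebra isomorphism. The proof is essentially two sentences of bookkeeping.
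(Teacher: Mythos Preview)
Your proposal is correct and matches the paper's approach exactly: the corollary is stated without proof in the paper precisely because it follows immediately from \lemref{QuiverOfCatTrivialMonoids} applied to the skeletal locally trivial category $\EF_{n}$, together with the characterization of irreducible morphisms in \lemref{IrreduciblesInEF_n}, transported to $\Bbbk\PF_{n}$ via the algebra isomorphism. One small remark: since $\Bbbk\PF_{n}\simeq\Bbbk\EF_{n}$ is an actual isomorphism of algebras, you need not invoke Morita equivalence at that step.
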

Now denote by $Q$ the quiver of $\Bbbk\EF_{n}$. We now want to describe
the quiver presentation of $\mathbb{\Bbbk}\PF_{n}$ using \propref{QuiverPresentationLocallyTrivialCat}.
We index the morphisms of $Q$ in the following way. We denote by
$d_{i,j}^{A}$ the irreducible morphism whose domain is $A$ and $j\in A$
is its unique element such that $d_{i,j}^{A}(j)=i\neq j$. Clearly,
using this notation implies that $i\vartriangleleft_{A}j$. Note that
the range of $d_{i,j}^{A}$ is $(A\cup\{i\})\backslash\{j\}$. For
simplicity we denote this set by $A_{i,j}$. 
\begin{lem}
\label{lem:RelationsHoldInEF_n}Let $A\subseteq[n]$ and assume $j<t$.
The relations
\begin{enumerate}[label=(PF\arabic*)]
\item \label{enu:PF1}$d_{i,j}^{A_{s,t}}d_{s,t}^{A}=d_{s,t}^{A_{i,j}}d_{i,j}^{A}\quad(s>j,\quad t,j\in A)$
\item \label{enu:PF2}$d_{i,j}^{A_{s,t}}d_{s,t}^{A}=d_{i,j}^{A_{i,t}}d_{j,t}^{A_{i,j}}d_{i,j}^{A}\quad(s=j,\quad t,j\in A)$
\item \label{enu:PF3}$d_{i,j}^{A_{s,t}}d_{s,t}^{A}=d_{s,j}^{A_{i,t}}d_{j,t}^{A_{i,j}}d_{i,j}^{A}\quad(i<s<j,\quad s,t,j\in A)$
\item \label{enu:PF4}$d_{i,j}^{A_{s,t}}d_{s,t}^{A}=d_{s,j}^{A_{i,t}}d_{j,t}^{A_{i,j}}d_{i,j}^{A}\quad(s\leq i,\quad t,j\in A)$
\item \label{enu:PF5}$d_{i,j}^{A_{s,t}}d_{s,t}^{A}=d_{s,j}^{A_{i,t}}d_{j,t}^{A_{i,j}}d_{i,s}^{A_{s,j}}d_{s,j}^{A}\quad(i<s<j,\quad t,j\in A,\quad s\notin A)$
\item \label{enu:PF6}$d_{i,j}^{(A_{i,t})_{s,i}}d_{s,i}^{A_{i,t}}d_{i,t}^{A}=d_{s,j}^{A_{i,t}}d_{j,t}^{A_{i,j}}d_{i,j}^{A}\quad(s<i<j<t,\quad t,j\in A,\quad i\notin A)$
\end{enumerate}
hold in $\EF_{n}$.
\end{lem}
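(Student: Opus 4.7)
My plan is to prove each of the six identities by direct element-by-element verification, mirroring the strategy used in the proof of \lemref{RelationsHoldInEC_n}. Since every irreducible $d_{i',j'}^{B}$ fixes every element of its domain $B$ except $j'$ (which it sends to $i'$), the composition on either side of a relation acts as the identity on any $k \in A$ that differs from the ``special'' elements $i, s, j, t$ appearing in the formula. It therefore suffices to track what happens to these special elements along each composition, which reduces the proof to a short finite case split for each of (PF1)--(PF6).

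Before computing images, I would first verify the well-definedness of every factor appearing in each relation: the morphism $d_{i',j'}^{B}$ exists precisely when $j' \in B$ and $i' \vartriangleleft_{B} j'$, i.e.\ $i' < j'$ and $(i', j'] \subseteq B$. Using the hypothesis $j < t$ together with the ordering constraints on $i, s, j, t$ in each individual case, whether $s \in A$, and the definition $B_{i',j'} = (B \cup \{i'\}) \setminus \{j'\}$, this is a short bookkeeping exercise at each intermediate set. For example, in (PF1) the fact that $s > j$ makes the intervals $(i,j]$ and $(s,t]$ disjoint, so applying $d_{s,t}^{A}$ does not disturb the condition $i \vartriangleleft_{A_{s,t}} j$ needed for the second factor, and symmetrically on the other side.

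After well-definedness is in hand, I compute the image of each special element under both sides and verify agreement. In (PF1) the two transpositions commute because their supports are disjoint. In (PF2)--(PF4) the left-hand side collapses $j \mapsto i$ and $t \mapsto$ something, while on the right this same net effect is produced by first sending $j \mapsto i$, then $t \mapsto j$, then rerouting $j$ appropriately; the agreement on $j, s, t$ is a direct tabulation, and all other coordinates of $A$ are fixed by every factor on both sides. Relations (PF5) and (PF6) work the same way, but with four factors on the right because $s \notin A$ forces an extra rewrite to introduce $s$ into the intermediate domain.

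The main obstacle will be the bookkeeping in (PF5) and (PF6), where $s$ toggles its membership in the intermediate domains as the composition proceeds, and one must verify the $\vartriangleleft$ condition at every stage of the right-hand side. Apart from this careful tracking of intermediate sets, the computation is purely mechanical, requiring nothing beyond the definitions of $d_{i,j}^{A}$, $A_{i,j}$, and $\vartriangleleft_{A}$, and the standing hypothesis $j < t$.
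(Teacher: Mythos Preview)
Your approach is correct and essentially identical to the paper's: first note that well-definedness of each composite is routine bookkeeping, then verify each relation by tabulating the image of a generic $k\in A$ (distinguishing $k=j$, $k=t$, and $k\notin\{j,t\}$). One small slip in your description: in \ref{enu:PF6} the side condition is $i\notin A$ (not $s\notin A$) and both sides have three factors, not four, so the ``extra rewrite'' remark applies only to \ref{enu:PF5}; this does not affect the method.
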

\begin{rem}
In order to simplify notation, we will drop the superscripts and remain
with the relations
\begin{enumerate}[label=(PF\arabic*)]
\item $d_{i,j}d_{s,t}=d_{s,t}d_{i,j}\quad(s>j,\quad t,j\in A)$
\item $d_{i,j}d_{s,t}=d_{i,j}d_{j,t}d_{i,j}\quad(s=j,\quad t,j\in A)$
\item $d_{i,j}d_{s,t}=d_{s,j}d_{j,t}d_{i,j}\quad(i<s<j,\quad s,t,j\in A)$
\item $d_{i,j}d_{s,t}=d_{s,j}d_{j,t}d_{i,j}\quad(s\leq i,\quad t,j\in A)$
\item $d_{i,j}d_{s,t}=d_{s,j}d_{j,t}d_{i,s}d_{s,j}\quad(i<s<j,\quad t,j\in A,\quad s\notin A)$
\item $d_{i,j}d_{s,i}d_{i,t}=d_{s,j}d_{j,t}d_{i,j}\quad(s<i<j<t,\quad t,j\in A,\quad i\notin A)$
\end{enumerate}
where the domain of every morphism should be understood from the context.
\end{rem}
\begin{rem}
Note that relations \ref{enu:PF1}-\ref{enu:PF5} cover all the cases
of a term $d_{i,j}d_{s,t}$ satisfying $j<t$.
\end{rem}
\begin{proof}[Proof of \lemref{RelationsHoldInEF_n}]
This is a routine matter to check that all the compositions in \lemref{RelationsHoldInEF_n}
are well defined. Now, it is a straightforward verification to check
equality. Choose some $k\in A$. For \ref{enu:PF1},\ref{enu:PF3},\ref{enu:PF4}
and \ref{enu:PF5} we note that

\[
d_{i,j}d_{s,t}(k)=d_{s,t}d_{i,j}(k)=\begin{cases}
k & k\neq t,j\\
i & k=j\\
s & k=t
\end{cases}
\]
\[
d_{i,j}d_{s,t}(k)=d_{s,j}d_{j,t}d_{i,j}(k)=\begin{cases}
k & k\neq t,j\\
i & k=j\\
s & k=t
\end{cases}
\]
\[
d_{i,j}d_{s,t}(k)=d_{s,j}d_{j,t}d_{i,s}d_{s,j}(k)=\begin{cases}
k & k\neq t,j\\
i & k=j\\
s & k=t.
\end{cases}
\]
For \ref{enu:PF2} we have that 
\[
d_{i,j}d_{s,t}(k)=d_{i,j}d_{j,t}d_{i,j}(k)=\begin{cases}
k & k\neq t,j\\
i & k=j,t
\end{cases}
\]
and for \ref{enu:PF6} we obtain (note that $k\neq i$ as $i\notin A$)
\[
d_{i,j}d_{s,i}d_{i,t}(k)=d_{s,j}d_{j,t}d_{i,j}(k)=\begin{cases}
k & k\neq t,j\\
i & k=j\\
s & k=t.
\end{cases}
\]
\end{proof}
In this subsection we will denote the category relation defined in
\lemref{RelationsHoldInEF_n} by $R$. We will show that $(Q,R)$
is a quiver presentation for $\Bbbk\PF_{n}$.
\begin{lem}
\label{lem:InductiveStepEF_n}Let $f:A\to B$ be a non-identity morphism
of $\EF_{n}$ and let 
\[
f=g_{1}\cdots g_{r}
\]
 be some decomposition of $f$ into irreducible morphisms. Let $j\in A$
be the minimal element $x\in A$ such that $f(x)<x$. Then $g_{1}\cdots g_{r}$
is $\theta_{R}$ equivalent to 
\[
g_{1}^{\prime}\cdots g_{r^{\prime}}^{\prime}d_{i,j}
\]
for some irreducible morphisms $g_{1}^{\prime},\ldots,g_{r^{\prime}}^{\prime}$
and some $i\vartriangleleft_{A}j$ (where $\theta_{R}$ is the category
congruence generated by $R$).
\end{lem}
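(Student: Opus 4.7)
The plan is to prove the lemma by induction on the domain of $f$ with respect to $\leq_{\EF_n}$, following the structure of \lemref{InductiveStepEC_n}. The base case ($A$ maximal, i.e.\ $A = \{1\}$ or $A = \varnothing$) is vacuous because such $A$ admits no non-identity outgoing morphism.

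For the inductive step, write $g_r = d_{s,t}^A$. A preliminary observation is that $t \geq j$: otherwise $t \in A$ with $t < j$ would force $f(t) = t$ by minimality of $j$, while $f(t) = (g_1 \cdots g_{r-1})(s) \leq s < t$ by the order-decreasing property, a contradiction. If $t = j$ then $g_r$ already has the required form, with $i = s \vartriangleleft_A j$.

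Assume $t > j$ and put $h = g_1 \cdots g_{r-1}$, a morphism with domain $A_{s,t}$ satisfying $A <_{\EF_n} A_{s,t}$ (witnessed by $g_r$). Apply the inductive hypothesis to $h$ to obtain a $\theta_R$-equivalent decomposition ending in $d_{i, j'}^{A_{s,t}}$, where $j'$ is the minimum element of $A_{s,t}$ moved by $h$ and $i \vartriangleleft_{A_{s,t}} j'$. A direct check, using that $h$ agrees with $f$ on $A \setminus \{t\}$ and that $h(s) = f(t)$ when $s \in A_{s,t} \setminus A$, shows $j' = j$ in all configurations except the edge case $s \notin A$, $s < j$ and $f(t) < s$, in which $j' = s$.

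In the generic case $j' = j$, the trailing pair $d_{i,j}^{A_{s,t}} d_{s,t}^A$ matches the left-hand side of exactly one of \ref{enu:PF1}--\ref{enu:PF5}, chosen by the position of $s$ relative to $i$ and $j$ and by whether $s \in A$; in every case the right-hand side terminates in a morphism $d_{\star, j}^A$ whose first subscript satisfies $\star \vartriangleleft_A j$, as follows from $i \vartriangleleft_{A_{s,t}} j$ combined with the side conditions of the chosen relation.

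The edge case $j' = s$ is where I expect the main technical obstacle. None of \ref{enu:PF1}--\ref{enu:PF5} applies, because each of them requires the common middle index to lie in $A$ whereas here $s \notin A$. My strategy is to enlarge the trailing block to the three morphisms $g_{r''}' \, d_{i,s}^{A_{s,t}} d_{s,t}^A$ and match them to the left-hand side of \ref{enu:PF6}, whose pattern $d_{i,j}\,d_{s,i}\,d_{i,t}$ with ``$i$'' $\notin A$ is tailored precisely to this configuration; a second, nested application of the inductive hypothesis to $g_1' \cdots g_{r''}'$ is used to put $g_{r''}'$ into the shape required by \ref{enu:PF6}, and applying the relation then yields a decomposition ending in $d_{\star, j}^A$ with $\star \vartriangleleft_A j$.
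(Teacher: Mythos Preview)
Your overall architecture matches the paper's: induction on the domain along $\leq_{\EF_n}$, a generic case handled by \ref{enu:PF1}--\ref{enu:PF5}, and an edge case requiring \ref{enu:PF6}. Your preliminary argument that $t\geq j$ and your analysis that $j'\in\{j,s\}$ are correct and match the paper's Case~1/Case~2 split. The generic case is fine.

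The edge case sketch, however, has two concrete gaps. First, a \emph{single} further application of the inductive hypothesis to $h'=g_1'\cdots g_{r''}'$ need not produce a term $d_{\star,j}$: the minimal moved element of $h'$ (whose domain is $A_{i,t}$) is $j$ only when $f(t)\geq i$; if $i\notin A$ and $f(t)<i$ the minimal moved element is $i$, and you are back in the edge configuration one level down. For instance, with $A=\{4,5,6,7\}$, $f(4)=4$, $f(5)=f(6)=4$, $f(7)=1$, $g_r=d_{3,7}$, one gets $s=3$, then $i=2$, and $h'$ has minimal moved element $2$, not $j=5$. The paper handles this by iterating, building a chain $d_{i_n,i_{n-1}}\cdots d_{i_2,i_1}d_{i_1,t}$ until the minimal moved element finally becomes $j$.

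Second, even when that chain terminates and you extract $d_{q,j}$ on the left, \ref{enu:PF6} applies only when $q$ equals the first subscript of the adjacent chain term (i.e.\ $q=i_1=s$ in your notation). In general $q\geq i_1$, and when $q>i_1$ one must first commute $d_{q,j}$ rightwards through the chain via \ref{enu:PF1} and then finish with one of \ref{enu:PF1}--\ref{enu:PF5} on $d_{q,j}d_{i_1,t}$; \ref{enu:PF6} is used only in the boundary case $q=i_1$. A small example: $A=\{3,4,5\}$, $f(3)=3$, $f(4)=3$, $f(5)=1$, $g_r=d_{2,5}$; here $s=2$, $i=1$, the next step yields $d_{3,4}$ (so $q=3\neq s=2$), and the triple $d_{3,4}\,d_{1,2}\,d_{2,5}$ does \emph{not} match the left side of \ref{enu:PF6}. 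The paper swaps $d_{3,4}$ past $d_{1,2}$ using \ref{enu:PF1} and then applies \ref{enu:PF4} to $d_{3,4}d_{2,5}$. Your sketch needs both the iteration and this $q=i_1$ versus $q>i_1$ dichotomy to go through.
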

\begin{proof}
We prove this by induction on the domain of $f$ according to the
partial order $\leq_{\EF_{n}}$ defined on the objects of $\EF_{n}$
(see \defref{EICatAreDirected}). If $f$ is irreducible then there
is nothing to prove. Now, consider a morphism $f:A\to B$ and assume
we have already proved the claim for every morphism with domain $X$
for $A<_{\EC_{n}}X$. If $g_{r}=d_{i,j}$ then we are done. Otherwise
$g_{r}=d_{i_{1},t}$ for some $t>j$. Define $h_{1}=g_{1}\cdots g_{r-1}$.
It is clear that the domain of $h$ is $A_{i_{1},t}$ and that $j\in A_{i_{1},t}$
and $A<_{\EC_{n}}A_{i_{1},t}$. 
\begin{casenv}
\item Assume that $j$ is the minimal element $x\in A_{i_{1},t}$ such that
$h(x)<x$. In this case the induction assumption implies that $g_{1}\cdots g_{r-1}$
is $\theta_{R}$ equivalent to $g_{1}^{\prime}\cdots g_{l}^{\prime}d_{q,j}$
and therefore $g_{1}\cdots g_{r}$ is $\theta_{R}$ equivalent to
$g_{1}^{\prime}\cdots g_{l}^{\prime}d_{q,j}d_{i_{1},t}$. Recall that
$j<t$ so by one of the relations \ref{enu:PF1}-\ref{enu:PF5} we
can ``push'' the $d_{q,j}$ term to the rightmost position and obtain
$g_{1}^{\prime}\cdots g_{r^{\prime}}^{\prime}d_{i,j}$ as required.
This finishes this case.
\item It might be the case that $j$ is not the minimal element $x\in\dom h_{1}$
such that $h_{1}(x)<x$. The only other possibility for such minimal
element is $i_{1}$. In this case, the induction assumption implies
that $g_{1}\cdots g_{r-1}$ is $\theta_{R}$ equivalent to $g_{1}^{(1)}\cdots g_{l_{1}}^{(1)}d_{i_{2},i_{1}}$
so $g_{1}\cdots g_{r-1}d_{i_{1},t}$ is $\theta_{R}$ equivalent to
$g_{1}^{(1)}\cdots g_{l_{1}}^{(1)}d_{i_{2},i_{1}}d_{i_{1},t}$. Now
denote $h_{2}=g_{1}^{(1)}\cdots g_{l_{1}}^{(1)}$ by a similar argument,
if $j$ is not the minimal $x\in\dom h_{2}$ such that $h_{2}(x)<x$
then it must be $i_{2}$ so we obtain a $\theta_{R}$ equivalence
with $g_{1}^{(2)}\cdots g_{l_{2}}^{(2)}d_{i_{3},i_{2}}d_{i_{2},i_{1}}d_{i_{1},t}$.
This process must terminate at some point. Eventually we obtain a
decomposition 
\[
g_{1}^{(n-1)}\cdots g_{l_{n-1}}^{(n-1)}d_{i_{n},i_{n-1}}\cdots d_{i_{2},i_{1}}d_{i_{1},t}
\]
where $j$ is the minimal element $x$ in the domain of $h_{n}=g_{1}^{(n-1)}\cdots g_{l_{n-1}}^{(n-1)}$
such that $h_{n}(x)<x$. Denote the domain of $h_{n}$ by $A_{n}$.
By the induction assumption this decomposition is $\theta_{R}$ equivalent
to 
\[
g_{1}^{(n)}\cdots g_{l_{n}}^{(n)}d_{q,j}d_{i_{n},i_{n-1}}\cdots d_{i_{2},i_{1}}d_{i_{1},t}.
\]
Now, we know that $i_{1}<j$. However, it cannot be the case that
$q<i_{1}<j$ because in this case $q\ntriangleleft_{A_{n}}j$ in contrary
to the existence of the $d_{q,j}$ term. Therefore, $i_{1}\leq q$
and $i_{n},\ldots,i_{2}<q$ so we can use \ref{enu:PF1} to swap terms
and obtain $g_{1}^{(n)}\cdots g_{l_{n-1}}^{(n)}d_{i_{n},i_{n-1}}\cdots d_{q,j}d_{i_{2},i_{1}}d_{i_{1},t}$
Now, if $i_{1}<q$ we can again swap terms with \ref{enu:PF1} to
get 
\[
g_{1}^{(n)}\cdots g_{l_{n-1}}^{(n)}d_{i_{n},i_{n-1}}\cdots d_{i_{2},i_{1}}d_{q,j}d_{i_{1},t}
\]
and finish the proof with another use of \ref{enu:PF1}-\ref{enu:PF5}.
If $i_{1}=q$, the decomposition is of the form 
\[
g_{1}^{(n)}\cdots g_{l_{n-1}}^{(n)}d_{i_{n},i_{n-1}}\cdots d_{i_{1},j}d_{i_{2},i_{1}}d_{i_{1},t}.
\]
By the minimality of $j$, we must have that $i_{1}\notin A$ so we
can use \ref{enu:PF6} to obtain 
\[
g_{1}^{(n)}\cdots g_{l_{n-1}}^{(n)}d_{i_{n},i_{n-1}}\cdots d_{i_{2},j}d_{j,t}d_{i_{1},j}
\]
which finishes this case and the proof.
\end{casenv}
\end{proof}
\begin{prop}
The tuple $(Q,R)$ is a category presentation for $\EF_{n}$.
\end{prop}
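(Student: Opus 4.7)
The plan is to mirror the corresponding proposition for $\EC_{n}$, proceeding by Noetherian induction on the domain $A$ with respect to the partial order $\leq_{\EF_{n}}$ of \defref{EICatAreDirected}. By \lemref{RelationsHoldInEF_n} the pairs of $R$ already represent the same morphism in $\EF_{n}$, so $\theta_{R}$ sits inside the kernel of the projection $Q^{\ast}\to\EF_{n}$; what remains is the converse, namely, that any two decompositions $g_{1}\cdots g_{r}$ and $h_{1}\cdots h_{l}$ of irreducibles presenting the same morphism $f\colon A\to B$ are $\theta_{R}$-equivalent. When $f$ is an identity both words are empty; otherwise, let $j$ be the minimal element of $A$ with $f(j)<j$ and apply \lemref{InductiveStepEF_n} to rewrite both decompositions as $g'_{1}\cdots g'_{r'}\,d_{i,j}$ and $h'_{1}\cdots h'_{l'}\,d_{i',j}$ for some $i,i'\vartriangleleft_{A}j$.

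The key extra subtlety compared with the $\PC_{n}$ proof is that the terminal irreducible $d_{i,j}$ is not pinned down by $f$ and $j$ alone. If $i\in A$, then $d_{i,j}$ identifies $i$ and $j$ on $A$, so the prefix must satisfy both $g'_{1}\cdots g'_{r'}(i)=f(i)$ and $g'_{1}\cdots g'_{r'}(i)=f(j)$; combined with $f(i)=i$ (by the minimality of $j$) this forces $i=f(j)$, and symmetrically $i'=f(j)$, so $i=i'$ automatically. If $i\notin A$, then $d_{i,j}$ is injective on $A$ and the prefix sends $i$ to $f(j)\leq i$. In the problematic sub-case $f(j)<i$, I would apply \lemref{InductiveStepEF_n} to the prefix itself---whose minimal non-fixed point in $A_{i,j}$ is $i$, since all smaller elements of $A_{i,j}$ lie in $A$ below $j$ and are $f$-fixed---to rewrite it as $g''_{1}\cdots g''_{r''}d_{s,i}$ with $f(j)\leq s\vartriangleleft_{A_{i,j}}i$, and then manipulate the composite $d_{s,i}d_{i,j}$ via relation \ref{enu:PF6} (which is precisely the relation that handles the case $i\notin A$), together with the swap/merge relations \ref{enu:PF1}--\ref{enu:PF5}, to replace the terminal pair by a single $d_{s,j}$. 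A secondary induction on $i-f(j)$ then brings both decompositions to a common terminal arrow $d_{i_{0},j}$.

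Once $i=i'$, the remaining prefixes both live on the domain $A_{i,j}$ with $A<_{\EF_{n}}A_{i,j}$ and they present the same morphism $A_{i,j}\to B$, since composition with $d_{i,j}$ yields $f$ in both cases. The main inductive hypothesis then gives $g'_{1}\cdots g'_{r'}\,\theta_{R}\,h'_{1}\cdots h'_{l'}$, and hence the original decompositions are $\theta_{R}$-equivalent. I expect the hardest part will be justifying the normalization of the terminal arrow rigorously: relation \ref{enu:PF6}, the only one that creates a ``missing intermediate'' element outside $A$, must be applied carefully in coordination with the bubbling moves used in \lemref{InductiveStepEF_n}, and the book-keeping in the secondary induction is where most of the technical work will lie.
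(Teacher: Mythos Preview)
Your overall framework---Noetherian induction on the domain with respect to $\leq_{\EF_{n}}$, invoking \lemref{InductiveStepEF_n} to produce terminal arrows $d_{i,j}$ and $d_{i',j}$, then applying the inductive hypothesis to the prefixes on $A_{i,j}$---is exactly what the paper does. The divergence is entirely in how you argue that $i=i'$.

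The paper settles this in two lines, with no case split and no secondary induction: assume $i<i'$; then $i\vartriangleleft_{A}j$ forces $i'\in A$, and from the $h'$-decomposition one gets
\[
f(i')=h_{1}'\cdots h_{l'}'(i')=h_{1}'\cdots h_{l'}'\bigl(d_{i',j}(j)\bigr)=f(j),
\]
while from the $g'$-decomposition $f(j)=g_{1}'\cdots g_{r'}'(i)\leq i<i'$. Hence $f(i')<i'$, contradicting the minimality of $j$. So $i=i'$ automatically, regardless of whether $i\in A$.

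Your route is both unnecessary and, as written, flawed. First, the ``symmetrically $i'=f(j)$'' step in your $i\in A$ case tacitly assumes $i'\in A$ as well, which you have not established. Second, and more seriously, in the $i\notin A$ sub-case you propose to convert the tail $d_{s,i}^{A_{i,j}}d_{i,j}^{A}$ into something ending in a single $d_{s,j}$; but $d_{s,j}^{A}$ is \emph{not} an arrow of $Q$, since $i\in(s,j]$ while $i\notin A$ gives $s\not\vartriangleleft_{A}j$. Relation \ref{enu:PF6} does not help here: both of its sides are length-$3$ paths, and neither matches a tail of the form $d_{s,i}d_{i,j}$. So the ``normalization of the terminal arrow'' you anticipate as the hardest part is in fact a non-step---the uniqueness of $i$ follows immediately from the minimality of $j$, and no manipulation of relations is required at this point.
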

\begin{proof}
In view of \lemref{RelationsHoldInEF_n}, it is left to show that
these relations are enough. In order words, if $f$ is a morphism
of $\EF_{n}$ with two different decompositions into irreducible morphisms
\begin{align*}
f & =g_{1}\cdots g_{r}\\
f & =h_{1}\cdots h_{l}
\end{align*}
we need to prove that these decompositions are $\theta_{R}$ equivalent.
We will prove this by induction on the domain of $f$ according to
the partial order $\leq_{\EF_{n}}$ defined on the objects of $\EF_{n}$.
If $f$ is irreducible then there is nothing to prove. Now, consider
a morphism $f:A\to B$ and assume we have already proved the claim
for every morphism with domain $X$ for $A<_{\EC_{n}}X$. Take $j$
to be the minimal element $x\in A$ such that $f(x)<x$ (such an element
exists if $f$ is not an isomorphism). By \lemref{InductiveStepEF_n}
we know that $g_{1}\cdots g_{r}$ and $h_{1}\cdots h_{l}$ are $\theta_{R}$
equivalent to $g_{1}^{\prime}\cdots g_{r^{\prime}}^{\prime}d_{i_{1},j}$
and $h_{1}^{\prime}\cdots h_{l^{\prime}}^{\prime}d_{i_{2},j}$ respectively.
We first claim that $i_{1}=i_{2}$. Assume without loss of generality
that $i_{1}<i_{2}$. This implies that $f(j)<i_{2}$. Moreover, $i_{1}\vartriangleleft_{A}j$
so $i_{2}\in A$ and therefore 
\[
f(i_{2})=h_{1}^{\prime}\cdots h_{l^{\prime}}^{\prime}d_{i_{2},j}(i_{2})=h_{1}^{\prime}\cdots h_{l^{\prime}}^{\prime}(i_{2})=h_{1}^{\prime}\cdots h_{l^{\prime}}^{\prime}d_{i_{2},j}(j)=f(j)<i_{2}.
\]
This contradicts the minimality of $j$ and therefore $i_{1}=i_{2}=i$.
So $g_{1}\cdots g_{r}$ and $h_{1}\cdots h_{l}$ are $\theta_{R}$
equivalent to $g_{1}^{\prime}\cdots g_{r^{\prime}}^{\prime}d_{i,j}$
and $h_{1}^{\prime}\cdots h_{l^{\prime}}^{\prime}d_{i,j}$ respectively.
Now, it is clear that the domain of both $g_{1}^{\prime}\cdots g_{r^{\prime}}^{\prime}$
and $h_{1}^{\prime}\cdots h_{l^{\prime}}^{\prime}$ is $A_{i,j}=(A\cup\{i\})\backslash\{j\}$.
For every $k\in A_{i,j}$, if $k\neq i$ then 
\[
g_{1}^{\prime}\cdots g_{r^{\prime}}^{\prime}(k)=g_{1}^{\prime}\cdots g_{r^{\prime}}^{\prime}d_{i,j}(k)=f(k)=h_{1}^{\prime}\cdots h_{l^{\prime}}^{\prime}d_{i,j}(k)=h_{1}^{\prime}\cdots h_{l^{\prime}}^{\prime}(k)
\]
and if $k=i$ then 
\[
g_{1}^{\prime}\cdots g_{r^{\prime}}^{\prime}(i)=g_{1}^{\prime}\cdots g_{r^{\prime}}^{\prime}d_{i,j}(j)=f(j)=h_{1}^{\prime}\cdots h_{l^{\prime}}^{\prime}d_{i,j}(j)=h_{1}^{\prime}\cdots h_{l^{\prime}}^{\prime}(i).
\]
Therefore, $g_{1}^{\prime}\cdots g_{r^{\prime}}^{\prime}$ and $h_{1}^{\prime}\cdots h_{l^{\prime}}^{\prime}$
present the same function.  Note that $A<A_{i,j}$ so by the inductive
assumption, they are $\theta_{R}$ equivalent. Hence $g_{1}^{\prime}\cdots g_{r^{\prime}}^{\prime}d_{i,j}$
and $h_{1}^{\prime}\cdots h_{l^{\prime}}^{\prime}d_{i,j}$ are also
$\theta_{R}$ equivalent and this finishes the proof.
\end{proof}
In conclusion, we have the following.
\begin{thm}
Let $Q$ be the quiver of $\Bbbk\PF_{n}\simeq\mathbb{\Bbbk}\EF_{n}$.
A quiver presentation of these algebras is given by the relations

\begin{align*}
d_{i,j}^{A_{s,t}}d_{s,t}^{A} & =d_{s,t}^{A_{i,j}}d_{i,j}^{A}\quad(s>j,\quad t,j\in A)\\
d_{i,j}^{A_{s,t}}d_{s,t}^{A} & =d_{i,j}^{A_{i,t}}d_{j,t}^{A_{i,j}}d_{i,j}^{A}\quad(s=j,\quad t,j\in A)\\
d_{i,j}^{A_{s,t}}d_{s,t}^{A} & =d_{s,j}^{A_{i,t}}d_{j,t}^{A_{i,j}}d_{i,j}^{A}\quad(i<s<j,\quad s,t,j\in A)\\
d_{i,j}^{A_{s,t}}d_{s,t}^{A} & =d_{s,j}^{A_{i,t}}d_{j,t}^{A_{i,j}}d_{i,j}^{A}\quad(s\leq i,\quad t,j\in A)\\
d_{i,j}^{A_{s,t}}d_{s,t}^{A} & =d_{s,j}^{A_{i,t}}d_{j,t}^{A_{i,j}}d_{i,s}^{A_{s,j}}d_{s,j}^{A}\quad(i<s<j,\quad t,j\in A,\quad s\notin A)\\
d_{i,j}^{(A_{i,t})_{s,i}}d_{s,i}^{A_{i,t}}d_{i,t}^{A} & =d_{s,j}^{A_{i,t}}d_{j,t}^{A_{i,j}}d_{i,j}^{A}\quad(s<i<j<t,\quad t,j\in A,\quad i\notin A)
\end{align*}

for $j<t$ and every $A\subseteq[n]$.
\end{thm}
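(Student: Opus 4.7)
The plan is to invoke \propref{QuiverPresentationLocallyTrivialCat}, which reduces the task of finding a quiver presentation for $\Bbbk\EF_n$ to finding a category presentation of $\EF_n$ whose generating graph is the quiver $Q$. Since $\EF_n$ is a skeletal locally trivial category, its quiver consists precisely of the irreducible morphisms, which by \lemref{IrreduciblesInEF_n} are the maps $d_{i,j}^{A}$ with $i \vartriangleleft_A j$. So the theorem follows as soon as one shows that $(Q,R)$, with $R$ the list of relations PF1--PF6, is a category presentation of $\EF_n$.

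The first half of this verification, namely that every relation in $R$ actually holds in $\EF_n$, has already been established in \lemref{RelationsHoldInEF_n} by direct pointwise evaluation on an arbitrary $k \in A$, with the case split governed by whether $k$ equals $t$, $j$, or neither. The remaining and more substantial task is to show that these relations are \emph{sufficient}: any two factorizations of the same morphism $f \colon A \to B$ into irreducibles must be equivalent under the congruence $\theta_R$ generated by $R$. The strategy is an induction on $A$ with respect to the partial order $\leq_{\EF_n}$ from \defref{EICatAreDirected}. Given two factorizations $f = g_1 \cdots g_r = h_1 \cdots h_l$, I would single out the minimal $j \in A$ with $f(j) < j$, apply \lemref{InductiveStepEF_n} to normalize both factorizations into the shapes $g_1' \cdots g_{r'}' d_{i_1, j}$ and $h_1' \cdots h_{l'}' d_{i_2, j}$, and then argue that $i_1 = i_2$: if $i_1 < i_2$ then $i_2 \in A$ (since $i_1 \vartriangleleft_A j$) and $f(i_2) = f(j) < i_2$, contradicting the minimality of $j$. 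After cancelling the common rightmost $d_{i,j}$, the two prefixes describe the same morphism whose domain $A_{i,j}$ is strictly above $A$ in $\leq_{\EF_n}$, so the induction hypothesis closes the argument.

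The genuine obstacle is \lemref{InductiveStepEF_n} itself, which is the technical core. When the rightmost factor of a decomposition is $d_{i_1, t}$ with $t > j$ rather than $d_{?, j}$, one wants to move a $d_{?, j}$-term past it using PF1--PF5. However, it may happen that after dropping $d_{i_1, t}$ the minimal moved element shifts from $j$ to $i_1$, forcing an iteration: one successively peels off factors $d_{i_n, i_{n-1}} \cdots d_{i_2, i_1} d_{i_1, t}$ until the minimal moved element in the remaining prefix is again $j$. At that point one obtains, via the induction hypothesis, a $d_{q,j}$ factor to the left of this chain, and must then commute it through using PF1, taking care of the critical case $q = i_1$ (when $i_1 \notin A$), which is precisely what relation PF6 was designed for. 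This case analysis, together with verifying at each stage that the index $q$ is compatible with some $i_\ell$ so that the applicable relation is really one of PF1--PF5 or PF6, is the delicate combinatorial step; once it is in place the rest of the argument is a straightforward cascade from the earlier lemmas.
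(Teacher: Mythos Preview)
Your proposal is correct and follows essentially the same route as the paper: reduce via \propref{QuiverPresentationLocallyTrivialCat} to a category presentation of $\EF_n$, verify the relations hold (\lemref{RelationsHoldInEF_n}), use \lemref{InductiveStepEF_n} to normalize both factorizations so they end in $d_{i,j}$ for the minimal moved $j$, argue $i_1=i_2$ by minimality, and finish by induction on the domain along $\leq_{\EF_n}$. Your description of the iterative peeling in \lemref{InductiveStepEF_n} and the role of PF6 in the critical case $q=i_1$ with $i_1\notin A$ also matches the paper's argument.
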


\paragraph{Loewy length}
\begin{prop}
The Loewy length of $\Bbbk\EF_{n}$ and hence of $\Bbbk\PF_{n}$ is
${n \choose 2}+1$.
\end{prop}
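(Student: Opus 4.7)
The plan is to mimic the two-part argument used for $\Bbbk\PC_n$ in \propref{LoweyLengthOfPC_n} essentially verbatim, since both ingredients transfer cleanly to the (a priori larger) category $\EF_n$.

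For the upper bound, I would first prove an analogue of \lemref{LoweyLengthLemma}: any non-identity morphism $f \in \EF_n(A,B)$ can be written as the composition of at most $\binom{n}{2}$ non-identity morphisms. The proof is identical. Relabel $[n]$ as $\{0,\dots,n-1\}$ by $x \mapsto x-1$; this bijection preserves the property of being order-decreasing. Setting $S(X) = \sum_{x \in X} x$, the key point is that for any non-identity onto order-decreasing $f : A \to B$, one has $S(B) < S(A)$, because
\[
S(B) \;\leq\; \sum_{a \in A} f(a) \;\leq\; \sum_{a \in A} a \;=\; S(A),
\]
where the first inequality is onto-ness (each $b \in B$ has at least one preimage) and the second is order-decreasingness, and equality throughout forces $f$ to be the identity. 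Since $S$ takes values in $[0,\binom{n}{2}]$ on subsets of $\{0,\dots,n-1\}$, no composition chain of non-identities can have length exceeding $\binom{n}{2}$. By \corref{RadToThek}, $\Rad^{\binom{n}{2}+1}\Bbbk\EF_n = 0$.

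For the lower bound, I would exhibit a morphism realising the bound, and here the construction given in the proof of \propref{LoweyLengthOfPC_n} transfers directly. The irreducible morphisms used there, namely $d_j^A$ that send $j$ to $j-1$ and fix the rest, are also irreducible in $\EF_n$: in the notation of \lemref{IrreduciblesInEF_n} they are $d_{j-1,j}^A$, and the condition $j-1 \vartriangleleft_A j$ is automatic since there are no elements strictly between $j-1$ and $j$. Hence the explicit factorisation
\[
\boldsymbol{1} \;=\; f_n \cdots f_3 f_2, \qquad f_i = d_2 \cdots d_{i-1} d_i,
\]
of the constant function $\boldsymbol{1} : [n] \to \{1\}$ given in \propref{LoweyLengthOfPC_n} remains a valid decomposition inside $\EF_n$, with total length $1 + 2 + \cdots + (n-1) = \binom{n}{2}$. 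This shows $\Rad^{\binom{n}{2}}\Bbbk\EF_n \neq 0$, and combining the two bounds gives Loewy length exactly $\binom{n}{2}+1$.

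There is no real obstacle here; the only things to check carefully are that the $S$-argument still yields \emph{strict} decrease without the order-preserving hypothesis (it does, as noted above) and that the specific irreducibles used in the $\EC_n$ construction remain irreducible in the larger category $\EF_n$ (they do, trivially). The conclusion for $\Bbbk\PF_n$ then follows from the isomorphism $\Bbbk\PF_n \simeq \Bbbk\EF_n$ recorded at the start of \secref{RepresentationTheoryOfMonoidsOfPartialFunctions}.
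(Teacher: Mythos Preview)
Your proposal is correct and follows essentially the same approach as the paper: the upper bound via the strictly decreasing ``sum'' statistic $S$ is exactly the argument of \lemref{LoweyLengthLemma}, and the lower bound reuses the explicit $\binom{n}{2}$-fold factorisation from \propref{LoweyLengthOfPC_n}. The paper phrases the lower bound slightly more economically by simply noting that $\EC_n$ is a subcategory of $\EF_n$ (so the factorisation transfers automatically), whereas you additionally verify irreducibility of the $d_j^A$ in $\EF_n$; this extra check is harmless but unnecessary, since \corref{RadToThek} only requires the factors to be non-isomorphisms.
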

\begin{proof}
The proof is similar to the case of $\mathbb{\Bbbk}\EC_{n}$. Note
that $\EC_{n}$ is a subcategory of $\EF_{n}$. An identical argument
of \lemref{LoweyLengthLemma} proves that no morphism can be written
as a composition of ${n \choose 2}+1$ non-identity elements and \propref{LoweyLengthOfPC_n}
proves that there exists a morphisms which is a composition of ${n \choose 2}$
non-identity morphisms.
\end{proof}
\textbf{Acknowledgments: }The author would like to thank Prof. Stuart
Margolis for pointing out that \lemref{PresentationCatToLinearCat}
can be proved by a categorical argument. The author is grateful to
the referees for their valuable comments and suggestions.

\bibliographystyle{plain}
\bibliography{library}

\end{document}